\documentclass[11pt]{amsart}

\usepackage[T1]{fontenc}
\usepackage[utf8]{inputenc}
\usepackage{lmodern}

\usepackage[american]{babel}
\usepackage[babel, final]{microtype}
\usepackage{amssymb}
\usepackage{mathdots}

\usepackage{xifthen}
\usepackage{mathtools}

\usepackage[pdftex, dvipsnames]{xcolor}
\usepackage[pdftex, final]{graphicx}
\usepackage{caption}
\usepackage{subcaption}
\usepackage{pinlabel}
\usepackage[pdftex,%
  letterpaper,%
  includehead,%
  includefoot,%
  nomarginpar,%
  lmargin=1in,%
  rmargin=1in,%
  tmargin=1in,%
  bmargin=1in,%
]{geometry}
\usepackage[pdftex,%
  final,%
  colorlinks=true,%
  linkcolor=NavyBlue,%
  citecolor=NavyBlue,%
  filecolor=NavyBlue,%
  menucolor=NavyBlue,%
  urlcolor=NavyBlue,%
  bookmarks=true,%
  bookmarksdepth=3,%
  bookmarksnumbered=true,%
  bookmarksopen=true,%
  bookmarksopenlevel=2,%
]{hyperref}
\hypersetup{
  pdftitle={Profinite tensor powers},
  pdfauthor={David Treumann and C.-M. Michael Wong},
  pdfsubject={Profinite tensor powers},
  pdfkeywords={Tensor product, profinite group, profinite cover, Heegaard Floer 
    homology}
}

\usepackage[all]{xy}
\usepackage{tikz}

\makeatletter
\g@addto@macro\@floatboxreset\centering
\makeatother

\allowdisplaybreaks[4]

\let\originalleft\left
\let\originalright\right
\renewcommand{\left}{\mathopen{}\mathclose\bgroup\originalleft}
\renewcommand{\right}{\aftergroup\egroup\originalright}

\theoremstyle{definition}
\newtheorem*{theorem*}{Theorem}
\newtheorem*{proposition*}{Proposition}
\newtheorem*{remark*}{Remark}

\numberwithin{equation}{subsection}

\newcommand*{\set}[1]{\left\{#1\right\}}
\newcommand*{\gen}[1]{\left\langle #1 \right\rangle}

\renewcommand*{\setminus}{\mathbin{-}}

\DeclareMathOperator{\card}{card}

\newcommand*{\comp}{\circ}

\newcommand*{\bigdirsum}{\bigoplus}
\newcommand*{\isom}{\cong}
\newcommand*{\tensor}{\otimes}
\newcommand*{\bigtensor}{\bigotimes}

\newcommand*{\cc}{\mathrm{cc}}
\newcommand*{\mcc}{\mathrm{mcc}}
\newcommand*{\bigprofintensor}[1]{\bigtensor_{#1}^\mcc}

\newcommand*{\blank}{\mathord{-}}

\newcommand*{\id}{\mathrm{id}}

\newcommand*{\specialset}[1]{\mathbf{#1}}
\newcommand*{\F}{\specialset{F}}
\newcommand*{\Z}{\specialset{Z}}
\newcommand*{\Q}{\specialset{Q}}
\newcommand*{\R}{\specialset{R}}
\newcommand*{\C}{\specialset{C}}
\newcommand*{\Ftwo}{\F_2}

\newcommand*{\vsV}{V}
\newcommand*{\vsW}{W}
\newcommand*{\bimodV}{V}

\newcommand*{\grpG}{G}
\newcommand*{\grpH}{H}
\newcommand*{\grpK}{K}

\newcommand*{\colgrp}{\grpH}
\newcommand*{\rowgrp}{\grpK}

\newcommand*{\subgrp}{\leq}
\newcommand*{\nsubgrp}{\trianglelefteq}
\newcommand*{\osubgrp}{\leq_{\mathrm{o}}}
\newcommand*{\onsubgrp}{\trianglelefteq_{\mathrm{o}}}
\newcommand*{\onsupgrp}{\trianglerighteq_{\mathrm{o}}}

\newcommand*{\catprofgrp}{\mathcal{P}}
\newcommand*{\groups}{\mathcal{S}}
\newcommand*{\germ}{\mathfrak{g}}

\newcommand*{\basis}[1]{#1}
\newcommand*{\basisB}{\basis{B}}
\newcommand*{\basisC}{\basis{C}}
\newcommand*{\basisD}{\basis{D}}
\newcommand*{\graphbasis}[1]{#1}
\newcommand*{\graphB}{\graphbasis{B}}
\newcommand*{\graphC}{\graphbasis{C}}
\newcommand*{\graphD}{\graphbasis{D}}

\newcommand*{\sector}[1]{\mathcal{#1}}
\newcommand*{\sectorA}{\sector{A}}

\newcommand*{\lmod}[2]{#2 \backslash #1}

\DeclareMathOperator{\Spec}{Spec}

\DeclareMathOperator{\Fun}{Fun}
\DeclareMathOperator{\Mod}{Mod}
\DeclareMathOperator{\Mat}{Mat}

\DeclareMathOperator{\Sym}{Sym}
\DeclareMathOperator{\Homeo}{Homeo}

\newcommand*{\bdy}{\partial}

\newcommand*{\zero}{0}
\newcommand*{\one}{1}

\newcommand*{\knotL}{L}
\newcommand*{\mfd}{M}
\newcommand*{\mer}{\mu}
\newcommand*{\longt}{\lambda}

\newcommand*{\qisom}{\simeq}

\newcommand*{\surf}{F}
\newcommand*{\basept}{z}
\newcommand*{\sbdiff}{\phi}
\newcommand*{\sbdiffmfd}[1][]{%
  \ifthenelse{\isempty{#1}}%
  {Y (\sbdiff)}%
  {Y (\sbdiff^{#1})}%
}

\newcommand*{\figeight}{4_1}

\newcommand*{\twistl}{\tau_{a}}
\newcommand*{\twistm}{\tau_{b}}

\DeclareMathOperator{\Spin}{Spin}
\newcommand*{\Spinc}{\Spin^c}
\newcommand*{\Spincstr}[1]{\mathfrak{#1}}
\newcommand*{\Spinct}{\Spincstr{t}}

\DeclareMathOperator{\CF}{CF}
\DeclareMathOperator{\SFC}{SFC}
\DeclareMathOperator{\SFH}{SFH}
\DeclareMathOperator{\HFK}{HFK}
\DeclareMathOperator{\CFDA}{CFDA}
\newcommand*{\HFKh}{\widehat{\HFK}}
\newcommand*{\CFDAh}{\widehat{\CFDA}}
\newcommand*{\CFDAhsbdiff}[1][]{%
  \ifthenelse{\isempty{#1}}%
  {\CFDAh (\sbdiff)}%
  {\CFDAh (\sbdiff^{#1})}%
}

\newcommand*{\Ainf}{A_{\infty}}
\newcommand*{\DA}{DA}
\renewcommand*{\AA}{AA}
\newcommand*{\DD}{DD}
\newcommand*{\AD}{AD}

\newcommand*{\heeg}{\mathcal{H}}
\newcommand*{\pmc}{\mathcal{Z}}

\newcommand*{\dga}{A}
\newcommand*{\idemring}{I}
\newcommand*{\idem}{\iota}
\newcommand*{\strand}{\rho}

\newcommand*{\boxtensor}{\boxtimes}

\newcommand*{\generator}[1]{\mathbf{#1}}
\newcommand*{\genp}{\generator{p}}
\newcommand*{\genq}{\generator{q}}
\newcommand*{\genr}{\generator{r}}

\newcommand*{\genf}{\generator{f}}
\newcommand*{\geng}{\generator{g}}
\newcommand*{\genh}{\generator{h}}

\newcommand*{\genpmi}{\genp}
\newcommand*{\genpl}{\genf}
\newcommand*{\genqmi}{\genq}
\newcommand*{\genql}{\geng}
\newcommand*{\genrmi}{\genr}
\newcommand*{\gensl}{\genh}

\newcommand*{\genpp}{\genpmi\genpl}
\newcommand*{\genps}{\genpmi\gensl}
\newcommand*{\genqq}{\genqmi\genql}
\newcommand*{\genrs}{\genrmi\gensl}
\newcommand*{\genrp}{\genrmi\genpl}

\newcommand*{\gent}{\generator{t}}
\newcommand*{\genu}{\generator{u}}
\newcommand*{\genv}{\generator{v}}
\newcommand*{\genw}{\generator{w}}
\newcommand*{\genx}{\generator{x}}
\newcommand*{\geny}{\generator{y}}
\newcommand*{\genz}{\generator{z}}

\newcommand*{\DAmap}{\delta}

\newcommand*{\bimodmerinv}{B}
\newcommand*{\bimodlong}{A}

\newcommand*{\aug}{\epsilon}

\DeclareMathOperator{\HH}{HH}
\DeclareMathOperator{\CH}{CH}
\newcommand*{\HHbox}{\widetilde{\HH}}
\newcommand*{\CHbox}{\widetilde{\CH}}

\newcommand*{\outerloop}{\mathcal{L}}
\newcommand*{\outerloopp}{\outerloop_+}

\title{Profinite tensor powers}

\author{David Treumann}
\address{Department of Mathematics \\ Boston College \\ Chestnut Hill, MA 02467 
  \\ USA}
\email{\href{mailto:david.treumann@bc.edu}{david.treumann@bc.edu}}

\author{C.-M. Michael Wong}
\address{Department of Mathematics and Statistics \\ University of Ottawa \\ 
  Ottawa, ON K1N 6N5 \\ Canada}
\email{\href{mailto:Mike.Wong@uOttawa.ca}{Mike.Wong@uOttawa.ca}}

\begin{document}

\begin{abstract}
  We discuss the problem of defining a tensor product of
  profinitely many copies of a vector space $\vsV$, and propose a definition
  $\bigprofintensor{X} \vsV$ in the special situation that (1) $\vsV$ is
  finite-dimensional over $\F_2$, and (2) the profinite $X$ indexing the
  tensor factors is acted on with finitely many orbits by a pro-$2$-group.
  The ``mcc'' on the tensor sign stands for ``magnetized and conditionally
  convergent.'' A variant construction makes sense when $\vsV$ is a bimodule
  over a ring of the form $\F_2 \times \dotsb \times \F_2$, and the index set 
  $X$ has the profinite version of a cyclic order. The definition organizes 
  some computations in Heegaard Floer homology: it can be pitched as a 
  computation of the Heegaard Floer theory of some pro-$3$-manifolds, though we 
  do not know how to define such a thing.
\end{abstract}

\maketitle

\section{Introduction}
\label{sec:intro}

\subsection{Infinite tensor products}
\label{ssec:intro-vs-infin}
Infinite tensor products are routinely considered in quantum theory, but they 
do not have a readymade definition from abstract algebra. One wants 
$\bigtensor_{x \in X} \vsV_x$ to be spanned by pure tensors
\begin{equation}
  \label{eq:intro-pure-tensor}
  \bigtensor_{x \in X} v_x, \qquad v_x \in \vsV_x,
\end{equation}
but when $X$ is an infinite set, the multilinear relations that one would like 
to impose among the \eqref{eq:intro-pure-tensor} involve infinite sums 
(consider what happens if you make infinitely many substitutions $v_x = v'_x + 
v''_x$) and infinite products (if you make infinitely many substitutions $v_x = 
c_x v'_x$). Most approaches to making $\bigtensor_{x \in X} \vsV_x$ 
precise\footnote{Not the approach of \cite{Ng13}.} impose additional structure 
to make it possible to analyze the convergence of the sums and products.

One standard way to do it \cite[Section~4.1]{vNe39}, when $X$ is countable and 
every $\vsV_x$ is a Hilbert space, is to choose a unit vector $|0\rangle_x$ in 
each $\vsV_x$ and to study only those pure tensors \eqref{eq:intro-pure-tensor} 
that have all but finitely many of the $v_x$ equal to the ``base point'' 
$|0\rangle_x$:
\begin{equation}
  \label{eq:intro-pure-tensor-res}
  v_x = |0\rangle_x \text{ for all but finitely many $x \in X$}.
\end{equation}
Only finitely many substitutions $v_x = v'_x +v''_x$ or $v_x = c_x v'_x$ are 
possible without leaving this restricted class of pure tensors, and the 
relations implied by these substitutions make sense in pure algebra. The inner 
products on the $\vsV_x$ separately induce an inner product on the vector space 
obtained by applying these relations, and $\bigtensor_{x \in X} \vsV_x$ is 
often taken to be the Hilbert space completion of this vector space. The 
outcome is a separable Hilbert space if all of the $\vsV_x$ are separable 
Hilbert spaces.

\subsection{Profinite tensor products}
\label{ssec:intro-vs-profin}
In this paper we seek a new definition of $\bigtensor_{x \in X} \vsV_x$ when 
$X$ is a profinite set. It goes against the grain of profinite set theory to 
impose a condition that refers to ``all but finitely many'' of the elements of 
a profinite set, like \eqref{eq:intro-pure-tensor-res}.  The restriction on 
pure tensors that we will consider attends not to finite subsets of $X$ but to 
open subsets of $X$. For simplicity, suppose that all of the $\vsV_x$ are the 
same vector space, $\vsV_x = \vsV$, so that a pure tensor 
\eqref{eq:intro-pure-tensor} is the same data as a function $X \to \vsV$. The 
class of pure tensors that we are interested in are those that obey the 
following condition:
\begin{equation}
  \label{eq:intro-magnetic}
  x \mapsto v_x \text{ is locally constant.}
\end{equation} For example, if $\vsV$ has a two-element basis whose elements 
are labeled ``up'' and ``down,'' and there is a site $x \in X$ where $v_x$ is 
``down'', then $v_y$ is ``down'' also for every $y$ that is sufficiently close 
to $x$. Because of this similarity to ferromagnetic spin configurations, we 
call the class of pure tensors obeying \eqref{eq:intro-magnetic} ``magnetized 
pure tensors.''

Is there a reasonable definition of a tensor product that is topologically 
spanned by the magnetized pure tensors? In this paper we give a definition in a 
very special situation: when
\begin{itemize}
  \item $\vsV$ is a finite-dimensional vector space over $\F_2$, the field with 
    $2$ elements, and
  \item there is a profinite group $G$ of $2$-power order that acts 
    continuously on $X$ with finitely many orbits.
\end{itemize}
The role of the field $\F_2$ is that for trivial but inimitable reasons the 
product of any infinite collection $(c_x \in \F_2)_{x \in X}$ has a clear 
definition: it is $1$ if every $c_x$ is equal to $1$, and it is $0$ if any 
$c_x$ is not equal to $1$. Even so, the substitutions $v_x = v'_x + v''_x$ lead 
to infinite sums of magnetized pure tensors. The role of the group action is to 
specify which of these sums is ``conditionally convergent'', by matching pairs 
of cancelling summands. We denote our construction $\bigprofintensor{X}\vsV$, 
the ``mcc'' stands for ``magnetized and conditionally convergent.'' 

\subsection{Germs of groups and germs of actions}
\label{ssec:intro-germs}
We wish to view $\bigprofintensor{X}\vsV$ as a space of quantum fields on $X$.  
There are several weak points to this analogy, one of which is that the notion 
of a conditionally convergent tensor is sensitive to the action $G \times X \to 
X$, and the data of this action is not a local structure on $X$. We address 
this by noting that the action determines a structure on $X$, a ``germ 
action'', which is local, and that the class of conditionally convergent 
tensors is determined by the germ action.

Two profinite groups $G$ and $G'$ are said to have isomorphic germs if there is 
an open subgroup of $G$ that is isomorphic to an open subgroup of $G'$. A 
category of germs can be defined as a localization of the category of profinite 
groups: if $\catprofgrp$ is the category of profinite groups and continuous 
homomorphisms, and $W$ is the class of morphisms that are open inclusions, then 
the category of germs is $\catprofgrp[W^{-1}]$. Results of Lazard \cite{Laz65} 
show that $\catprofgrp[W^{-1}]$ has (for each prime $p$) a full subcategory 
that's equivalent to the category of finite-dimensional Lie algebras over 
$\Q_p$, and so we denote germs in the same Fraktur font used to denote Lie 
algebras.

For any $\germ \in \catprofgrp[W^{-1}]$, there is a sensible notion of a 
$\germ$-action $\germ \times X \to X$ on a topological space $X$: it means that 
some group $G$ whose germ is $\germ$ acts on $X$. If $G$ acts on $X$ with 
finitely many orbits, then any open subgroup of $G$ also acts on $X$ with 
finitely many orbits, and in this case we say that $\germ$ acts on $X$ with 
finitely many orbits. A $\germ$-action with finitely many orbits is a local 
structure in the following sense: if $X$ is partitioned into finitely many 
clopen subsets $X_1 \amalg \dotsb \amalg X_n$, then a $\germ$-action with 
finitely many orbits on $X$ determines and is determined by a $\germ$-action 
with finitely many orbits on each of the $X_i$.

In the body of the paper we do not work directly with $\catprofgrp[W^{-1}]$ but 
with equivalence classes of subgroups of the group of self-homeomorphisms of 
$X$.

Germs of group actions also appear in the work of Harman and Snowden 
\cite[Section~2.5]{HarSno24} on tensor categories.

\subsection{Finite tensor powers of bimodules}
\label{ssec:intro-bimod-fin}
When $\idemring$ is an associative algebra, a pair $\vsV,\vsW$ of 
$(\idemring,\idemring)$-bimodules can be tensored together to get a third 
$(\idemring,\idemring)$-bimodule, denoted $\vsV \tensor_{\idemring} \vsW$. The 
tensor product is spanned by pure tensors of the form $v \tensor_{\idemring} w$ 
and the fundamental relation is
\[
  v a \tensor_{\idemring} w = v \tensor_{\idemring} aw, \qquad \text{for all } 
  a \in \idemring, v \in \vsV, \text{ and } w \in \vsW.
\]
In \S~\ref{sec:bimod}, we will investigate the possibility of iterating this 
construction profinitely many times. Before explaining our point of view in 
\S~\ref{ssec:intro-bimod-profin}, we make some remarks about finite 
$\tensor_{\idemring}$-powers.

There are two basic considerations for $\tensor_{\idemring}$, that are not 
present for tensor products of vector spaces:
\begin{enumerate}
  \item It is almost always more appropriate to consider the derived version of 
    $\tensor_{\idemring}$. We will avoid this issue by considering only the 
    case where $\idemring$ is a semisimple algebra, when the derived and 
    underived version are the same. In fact, from \S~\ref{ssec:bimod-func-mcc} 
    onward, we work with $\idemring = \F_2 \times \dotsb \times \F_2$.
  \item The tensor product over $\idemring$ is not symmetric: usually $\vsV 
    \tensor_{\idemring} \vsW$ is different from $\vsW \tensor_{\idemring} 
    \vsV$.  Because of this, to make sense of the $\tensor_{\idemring}$-power 
    of $X$ copies of a single $(\idemring,\idemring)$-bimodule $\vsV$, even 
    when $X$ is finite, it is necessary to first specify the order that the 
    tensor products are taken.
\end{enumerate}

Before expanding on item (2), it is useful to make the observation that when 
$\idemring$ is semisimple, $\vsV \tensor_{\idemring} \vsW$ can be found in a 
natural way as a summand of $\vsV \tensor \vsW$ (obtained by forgetting the 
bimodule structures and taking the usual tensor product). The whole of $\vsV 
\tensor \vsW$ decomposes as a sum
\[
  \bigoplus_{\iota'_1,\iota''_1, \iota'_2,\iota''_2} \iota'_1 \vsV \iota''_1 
  \tensor \iota'_2 \vsW \iota''_2,
\]
where the $\iota$'s run through four independent copies of the set 
$\Spec(\idemring)$ of primitive central idempotents in $\idemring$. The 
bimodule tensor product $\vsV \tensor_\idemring \vsW$ is the sum of only those
$\iota'_1 \vsV \iota''_1 \tensor \iota'_2 \vsW \iota''_2$ where $\iota''_1 = 
\iota'_2$. Sometimes in physics this kind of summand is called a ``sector.''

The necessary ordering noted in item (2) is an extra structure on the set $X$ 
indexing the tensor factors:
\begin{enumerate}
  \item[(2L)] Write $X_L$ for a finite set $X$ together with a linear order. If 
    $X$ has $n$ elements and they are ordered $x_1 < x_2 < \dotsb < x_n$, then 
    an element $\bigtensor_{X_L} \vsV$ is a linear combination of pure tensors 
    like $v_{x_1} \tensor_{\idemring} v_{x_2} \tensor_{\idemring} \dotsb 
    \tensor_{\idemring} v_{x_n}$ subject to the same multilinear relations as 
    the usual tensor product, and to
    \begin{align*}
      v_{x_1}a \tensor_\idemring v_{x_2} \tensor_\idemring v_{x_3} 
      \tensor_\idemring \dotsb \tensor_\idemring v_{x_n} &= v_{x_1} 
      \tensor_\idemring av_{x_2} \tensor_\idemring v_{x_3} \tensor_\idemring 
      \dotsb \tensor_\idemring v_{x_n},\\
      v_{x_1} \tensor_\idemring v_{x_2}a \tensor_\idemring v_{x_3} 
      \tensor_\idemring \dotsb \tensor_\idemring v_{x_n} &= v_{x_1} 
      \tensor_\idemring v_{x_2} \tensor_\idemring a v_{x_3} \tensor_\idemring 
      \dotsb \tensor_\idemring v_{x_n},
    \end{align*}
    etc.
  \item[(2S)] Write $X_S$ for a finite set together with a cyclic order. If $X$ 
    has $n$ elements that are cyclically ordered $x_1 \to x_2 \to \dotsb \to 
    x_n \to x_1$, then $\bigtensor_{X_S} \vsV$ is spanned by the same pure 
    tensors as in (2L), subject to the same relations, and to one new relation
    \[
      av_{x_1} \tensor_{\idemring} v_{x_2}  \tensor_{\idemring} \dotsb  
      \tensor_{\idemring} v_{x_n} = v_{x_1} \tensor_{\idemring} v_{x_2}  
      \tensor_{\idemring} \dotsb  \tensor_{\idemring} v_{x_n}a.
    \]
\end{enumerate}
Again when $\idemring$ is semisimple we can find either 
$\tensor_{\idemring}$-power, $\bigtensor_{X_L} \vsV$ or $\bigtensor_{X_S} 
\vsV$, as a summand of the tensor power $\bigtensor_X \vsV$ obtained by 
forgetting the bimodule structure. If we write $\Spec(\idemring) \subset 
\idemring$ for the set of primitive central idempotents in $\idemring$, then 
the whole of $\bigtensor_X \vsV$ splits as a direct sum
\[
  \bigtensor_X \vsV = \bigoplus_{(\Spec(\idemring) \times \Spec(\idemring))^X} 
  \bigtensor_{x \in X} \iota_{x}' \vsV \iota_{x}''
\]
where the parts are indexed by assignments $x \mapsto (\iota_x', \iota_x'')$.  
The $\tensor_{\idemring}$-powers described in (2L) and (2S) are obtained by 
discarding those parts whose idempotents are not compatible with the linear or 
cyclic order.

Another standard  presentation of the tensor power (2S) invokes a construction 
that takes an $(\idemring,\idemring)$-bimodule $M$ to a vector space called 
$\HH_0(\idemring;M)$, which is the quotient of $M$ by the relations $am = ma$.  
The derived version of this construction is called the Hochschild chain 
complex, and $\HH_0$ is its $0$th homology. When $\idemring$ is semisimple, all 
the other homology groups of the Hochschild chain complex vanish.  The vector 
space described in item (2S) is the application of $\HH_0$ to the bimodule 
$\bigtensor_{X_L} \vsV$ defined in item (2L), so that
\begin{equation}
  \label{eq:intro-cyclic-linear}
  \bigtensor_{X_S} \vsV = \HH_0\left(\idemring;\bigtensor_{X_L} \vsV\right).
\end{equation}

\subsection{Profinite tensor powers of bimodules---the ``solenoidal sector''}
\label{ssec:intro-bimod-profin}
In \S~\ref{sec:vs} we will review the theory of the ``$X$-fold'' tensor power 
of a vector space $\vsV$ when $X$ is a finite set. We will discuss some 
obstacles to elaborating on this theory when $X$ is a profinite set. In case 
the ground field is $\F_2$ and a profinite $2$-group acts on $X$ with finitely 
many orbits, we find some tricks for getting around those obstacles.

In \S~\ref{sec:bimod}, we consider not a vector space $\vsV$ but a bimodule for  
an associative algebra. On a profinite set, it is easier to give a structure 
that resembles a cyclic order than it is to give a structure that resembles a 
linear order: the $X$-fold tensor product of bimodules that we will develop for 
profinite $X$ is the one summarized in item (2S) of 
\S~\ref{ssec:intro-bimod-fin}. As in \S~\ref{ssec:intro-vs-profin}, we obtain a 
reasonable definition only in a special situation:
\begin{itemize}
  \item $\idemring$ is a product of finitely many copies of the field $\F_2$, 
    and
  \item $X$ is acted on, with finitely many orbits, by a profinite $2$-group 
    $G$
\end{itemize}
Furthermore, we require a profinite version of the cyclic order $S$. We call it 
a ``solenoidal'' structure. Given this structure we define a magnetized and 
conditionally convergent version of (2S) that we denote by 
$\bigprofintensor{X_S} V$. There is no analogous equation to 
\eqref{eq:intro-cyclic-linear}.

A solenoidal structure is just a homeomorphism $S \colon X \to X$ that commutes 
with at least one open subgroup $\grpH \osubgrp G$, or said another way, that 
commutes with the germ $\germ$ acting on $X$. For each $\grpH$, such an $S$ 
induces a permutation of the finite quotient $\grpH \backslash X$. There is a 
natural way in which each of these permutations encodes a closed, oriented 
$1$-manifold, possibly with multiple components, that is subdivided into 
vertices and edges: each element of $\grpH \backslash X$ labels an edge of the 
subdivision, stretching between its starting and ending vertex, and $S$ takes 
each edge to the other edge incident with its ending vertex. Let us abuse the 
``subscript $S$'' notation and denote this subdivided $1$-manifold by $(\grpH 
\backslash X)_S$. In a similar way, $X$ itself labels the edges in a 
subdivision of a pro-$1$-manifold, i.e.\ in what is sometimes called a 
solenoid:
\begin{equation}
  \label{eq:intro-solenoid}
  X_S \text{ as a pro-$1$-manifold} := \varprojlim_{\grpH \osubgrp G} 
  \left((\grpH \backslash X)_S\text{ as a $1$-manifold}\right).
\end{equation}

\subsection{The staircase picture}
\label{ssec:intro-staircase}
The unusual feature of tensor products over $\F_2$ that makes our construction 
work is that an $\F_2$--vector space $\vsV$ is, in a rather canonical way, a 
subquotient of $\vsV \tensor_{\F_2} \vsV$. The symmetric tensors $\Sym^2(\vsV) 
\subset \vsV \tensor_{\F_2} \vsV$ project canonically onto $\vsV$ by the rule 
(which is only valid over $\F_2$)
\begin{equation*}
  \label{eq:intro-sqrt}
  v \tensor v \mapsto v \quad \text{and}\quad v \tensor w + w \tensor v \mapsto 
  0.
\end{equation*}
(We warn that in characteristic $2$ the natural invariant subspace of $\vsV 
\tensor \vsV$ is not canonically isomorphic to the natural ``coinvariant'' 
quotient of $\vsV \tensor \vsV$, and that the notation $\Sym^2(\vsV)$ is more 
commonly used for the quotient construction.)

This construction can be iterated:
\[
  \xymatrix{
    \ar[d] \vdots & \ar[d] \vdots & \ar[d] \vdots & \ar[d] \vdots & \iddots\\
    \ast \ar[d] \ar[r] & \ast \ar[d] \ar[r] & \Sym^2(\vsV^{\tensor 4}) \ar[r] 
    \ar[d] & \vsV^{\tensor 8}\\
    \ast \ar[r] \ar[d] & \Sym^2(\vsV^{\tensor 2}) \ar[r] \ar[d] & \vsV^{\tensor 
      4} \\
    \Sym^2(\vsV) \ar[r] \ar[d] & \vsV^{\tensor 2} \\
    \vsV
  }
\]
The unnamed groups in this diagram, denoted by asterisks, are fiber products.  
For $n > 0$, there are actually many natural inclusions of 
$\Sym^2(\vsV^{\tensor 2^n})$ into $\vsV^{\tensor 2^{n+1}}$, depending on how 
the indices for the factors in the smaller tensor product are identified with 
matched pairs of indices for the factors in the larger tensor product.  
Organizing these choices leads ultimately (and slightly more generally) to an 
indexing involving a profinite set $X$ with the action of a pro-$2$-group $G$, 
as in \S~\ref{ssec:intro-vs-profin}. The most natural formulation has columns 
of the ``staircase'' indexed by open subgroups $\colgrp \osubgrp G$, and the 
rows by open normal subgroups $\rowgrp \onsubgrp \colgrp$. Rows that are higher 
up, and columns that are further to the right, are indexed by smaller 
subgroups.

The limit $\varprojlim_{\rowgrp \onsubgrp \colgrp}$ of each column of the 
diagram is an $\F_2$--vector space with a natural, compact, topology. These 
limits are part of a direct system induced by the horizontal arrows in the 
diagram, and the colimit $\varinjlim_{\colgrp \osubgrp G}$ of this direct 
system is $\bigprofintensor{X}{\vsV}$.  Another way to put it is that 
$\bigprofintensor{X}{\vsV}$ has a pair of families of subspaces $F'_\colgrp$ 
and $F''_\colgrp$, indexed by open subgroups of $G$, and that 
$F'_\colgrp/F''_\colgrp$ is naturally identified with the finite tensor product 
$\bigtensor_{\colgrp \backslash X} \vsV$:
\begin{equation}
  \label{eq:intro-F/F-vs}
  F''_\colgrp \subset F'_\colgrp \subset \bigprofintensor{X}{\vsV}; \qquad 
  F'_\colgrp/F''_\colgrp \cong \bigtensor_{\colgrp \backslash X} \vsV.
\end{equation}

A similar staircase can be drawn in the setting of 
\S\S~\ref{ssec:intro-bimod-fin}--\ref{ssec:intro-bimod-profin}, when $\vsV$ is 
an $(\idemring,\idemring)$-bimodule: when $\idemring$ is a product of copies of 
$\F_2$, $\HH_0(\idemring,\vsV)$ is in a rather canonical way the subquotient of 
$\HH_0(\idemring,\vsV \tensor_{\idemring} \vsV)$, which in turn is a 
subquotient of $\HH_0(\idemring,(\vsV \tensor_{\idemring} \vsV) 
\tensor_{\idemring}(\vsV \tensor_{\idemring} \vsV))$ and so on. After using a 
$G$-solenoid $X_S$ to organize these subquotient relationships, one obtains a 
description of $\bigprofintensor{X_S}{\vsV}$ as an inverse limit followed by a 
direct limit. Another way to put it, say in the case that $X_S$ is 
connected,\footnote{Meaning that $S$ has one orbit on every $\colgrp \backslash 
  X$.  This condition implies that the pro-1-manifold \eqref{eq:intro-solenoid} 
  is connected in the sense of point-set topology, but note that no solenoid is 
  ever path connected.} is that $\bigprofintensor{X_S}{\vsV}$ has a similar 
structure as \eqref{eq:intro-F/F-vs}: a pair of subspaces $F''_\colgrp \subset 
F'_\colgrp$ for each open subgroup $\colgrp \osubgrp G$, with 
$F'_\colgrp/F''_\colgrp$ identified with $\HH_0$ of an ``in-a-line'' tensor 
power ((2L) of \S~\ref{ssec:intro-bimod-fin} of $\colgrp \backslash X$ many 
copies of $\vsV$:
\begin{equation}
  \label{eq:intro-F/F-bimod}
  F''_\colgrp \subset F'_\colgrp \subset \bigprofintensor{X_S}{\vsV};  \qquad 
  F'_\colgrp/F''_\colgrp \cong \bigtensor_{(\colgrp \backslash X)_S} \vsV \cong 
  \HH_0\left(\idemring,\bigtensor_{(\colgrp \backslash X)_L} \vsV\right).
\end{equation}

\subsection{An example}
\label{ssec:intro-ex}
We have been motivated by some computations in Heegaard Floer homology.

Ozsv\'ath and Szab\'o \cite{OzsSza04:HF} introduced invariants of a closed, 
oriented $3$-manifold $Y$, now called the Heegaard Floer homology groups of 
$Y$.
The Heegaard Floer homology of $Y$ is meant to be a model for the ground states 
of Seiberg--Witten theory on $\R \times Y$.

Ozsv\'ath and Szab\'o presented several versions of Heegaard Floer homology, 
called ``hat'', ``minus'', ``plus'', and ``infinity''. A version for manifolds 
with boundary, sutured Heegaard Floer homology, was introduced by Juh\'asz 
\cite{Juh06}.  We will describe our computations in the sutured framework.

Let $\knotL \subset S^3$ be the figure-eight knot. Let $(M,\partial M)$ be the 
exterior of $\knotL$, i.e.\ the complement in $S^3$ of a tubular neighborhood 
of $\knotL$. The boundary of $M$ is a torus, which we equip with two 
``sutures'' $\Gamma \subset \partial M$, i.e.\ two disjoint meridional circles 
in this torus.

Like the first homology of any knot exterior, the first homology of $M$ is an 
infinite cyclic group, for which we choose a generator: $H_1(M) \cong \Z$. For 
each finite-index subgroup $H \nsubgrp \Z$, let $M_H$ denote the covering space 
of $M$ whose deck group is $\Z/H$. The preimage of $\partial M$ in $M_H$ is the 
boundary of $M_H$, and is another connected torus that we denote by $\partial 
M_H$. The preimage of $\Gamma$ is another pair of circles that we denote by 
$\Gamma_H$. We are particularly interested in the tower
\begin{equation*}
  \dotsb \twoheadrightarrow (M_{4 \Z} ,\partial M_{4\Z},\Gamma_{4\Z})
  \twoheadrightarrow (M_{2 \Z} ,\partial M_{2\Z},\Gamma_{2\Z}) 
  \twoheadrightarrow (M,\partial M, \Gamma)
\end{equation*}
when $H$ runs through the subgroups of index $2^n$.

\begin{theorem*}
  Let $G \cong \Z_2$ be the $2$-adic completion of $H_1(M)$, let $X$ be an 
  affine copy of $G$, and let $S\colon X \to X$ be the ``obvious'' solenoidal 
  structure given by $S(x) = x+1$. There is a semisimple algebra $\idemring$ 
  and an $(\idemring,\idemring)$-bimodule $\vsV$ such that with
  \[
    F''_H \subset F'_H \subset \bigprofintensor{X_S} \vsV
  \]
  as in \eqref{eq:intro-F/F-bimod}, there are isomorphisms
  \[
    F'_H/F''_H \cong \left(
      \begin{array}{c}
        \text{the sutured Heegaard Floer}\\
        \text{homology of }(M_H,\partial M_H, \Gamma_H)
      \end{array}
    \right)
  \]
  where the left-hand side is as in \S~\ref{ssec:intro-staircase}.
\end{theorem*}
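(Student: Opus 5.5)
The plan is to combine two ingredients: the structural description \eqref{eq:intro-F/F-bimod} of $F'_H/F''_H$ as a cyclic tensor power, and the bordered Floer computation of sutured Floer homology of the cyclic covers. The figure-eight knot is fibered with genus-one fiber $\surf$ and pseudo-Anosov monodromy $\sbdiff = \twistl\twistm^{-1}$. The exterior $M$ with two meridional sutures is the mapping torus of $\sbdiff$ acting on the once-punctured torus, with the sutures recording the binding. The cyclic cover $M_{n\Z}$ is then the mapping torus of $\sbdiff^n$.

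\textbf{Step 1: take $\idemring$ and $\vsV$ from bordered Floer theory.} Let $\dga$ be the torus algebra for the pointed matched circle $\pmc$ of $\surf$, and let $\idemring$ be its ring of idempotents, which is $\F_2\times\F_2$. Take $\vsV$ to be the underlying $(\idemring,\idemring)$-bimodule of a reduced model of $\CFDAhsbdiff$. I would choose a model with vanishing differential whose higher $\DAmap$-operations are as simple as possible, for example by factoring $\sbdiff$ into the Dehn twist bimodules $\bimodlong$ and $\bimodmerinv$ and then simplifying.

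\textbf{Step 2: identify sutured Floer homology of the covers with a Hochschild-type invariant.} By the Lipshitz--Ozsv\'ath--Thurston self-gluing theorem, the sutured Floer complex of a mapping torus with sutures along the binding is computed by a Hochschild complex of the bimodule. Concretely,
\[
\SFH(M_{n\Z},\partial M_{n\Z},\Gamma_{n\Z}) \cong \HHbox\bigl(\CFDAhsbdiff[n]\bigr),
\]
where $\CFDAhsbdiff[n] \qisom \CFDAhsbdiff\boxtensor\dotsb\boxtensor\CFDAhsbdiff$ by the pairing theorem. The boxed version $\HHbox$ appears because of the basepoint at the binding. Underlying this complex is $\HH_0\bigl(\idemring,\bigtensor_{(H\backslash X)_L}\vsV\bigr)$, since $H\backslash X = \Z/n$ with $S$ acting as the $n$-cycle.

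\textbf{Step 3: show the differential vanishes for $n=2^k$.} This is the main obstacle. The $\delta$-operations of $\vsV$ contribute differentials to the boxed Hochschild complex, and these differentials go around the cycle $\Z/n$ using algebra elements. I would first analyze the algebra elements emitted by $\DAmap$ for the chosen model of $\vsV$ and show that the only nonzero differentials come from matched contributions that pair up under rotation symmetry.

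The intended mechanism is that a term produced by a closed loop around the cyclic cover appears with multiplicity equal to an orbit size under the deck group $\Z/2^k$. Every orbit of size greater than one has even size and so cancels mod $2$. The fixed contributions should correspond exactly to terms killed by passing to the sector subquotient. I would verify this cancellation by computing the rank of $\HFKh$ in the top Alexander grading for the cover, as a check: the total rank grows like the trace of $\sbdiff^n$ acting on the relevant train-track data, which is Lucas-number growth.

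\textbf{Step 4: match with $F'_H/F''_H$.} Once the differential is shown to vanish, $\SFH(M_H)\cong\HH_0\bigl(\idemring,\bigtensor_{(H\backslash X)_L}\vsV\bigr)$. By \eqref{eq:intro-F/F-bimod} this equals $F'_H/F''_H$. It remains to check that the identifications are natural along the tower, so that they are compatible with the staircase maps induced by the covers $M_{2^{k+1}\Z}\to M_{2^k\Z}$, though the statement as posed only requires the isomorphism for each $H$ separately.
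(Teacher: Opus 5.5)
There is a genuine gap at Step~3, which is the heart of the proof. The mechanism you propose does not work. You suggest that contributions to the Hochschild differential cancel in even-size orbits of the deck group $\Z/2^k$, and that the remaining ones are ``killed by the sector subquotient.''

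The rotation action permutes the generators of $\CHbox$ of the $n$-fold box power. So it can only pair up operation trees (or holomorphic curves in the glued diagram) between generators that are both fixed by a common nontrivial subgroup. For generators with trivial stabilizer nothing is paired. Equivariance of this kind gives localization spectral sequences as in \cite{Hen12,LipTre16}, not the vanishing of a differential.

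Nor is anything Floer-theoretic killed by $F''_H$. The quotient $F'_H/F''_H$ is just $\Fun(\graphB^{(H\backslash X)_S},\Ftwo)$, the full cyclic tensor power. A sign that the $2$-power structure is irrelevant here is that the differential vanishes for every $n$, not only for powers of $2$.

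The missing idea is a bookkeeping argument on algebra outputs. In $\CFDAh(\twistm^{-1},0)\boxtensor\CFDAh(\twistl,0)$ (see \eqref{eq:DAbimod-box-final}), two facts hold:
every $\DAmap^{1,\boxtensor}$ operation that outputs $\strand_2$ has $\strand_2$ among its inputs;
every one that outputs an idempotent has $\strand_2$ or an idempotent among its inputs.
In a Hochschild operation tree there are no external algebra inputs: every algebra input is the output of another vertex. Looking at the first offending edge from the top therefore shows that no edge carries $\strand_2$, and then that no edge carries an idempotent. Each tree has exactly one augmentation vertex $\aug$, which kills non-idempotents, so every tree contributes zero.

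A rank count could replace this argument only if you imported an independent computation of $\dim\HFKh(Y_H,L_H)$. The check you suggest also looks at the top Alexander grading, which is one-dimensional for these fibered knots; the exponential growth is in the middle grading.

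Step~1 also contains an error. The idempotent ring of the full algebra $\dga(\pmc)$ is $\Ftwo^4$. The ring $\Ftwo\times\Ftwo$ is only its strands-grading-$0$ part. Over two idempotents, the cyclic tensor powers of the five-dimensional bimodule spanned by $\genpp,\genps,\genqq,\genrp,\genrs$ miss the two one-dimensional summands of $\SFH(M_H,\Gamma_H)$ in Alexander gradings $\pm1$. No other bimodule over a two-idempotent ring repairs this: the required dimensions $5,9,49$ for $n=1,2,4$ are not $\operatorname{tr}(B^n)$ for any $2\times 2$ nonnegative integer matrix $B$. You need the full $\idemring(\pmc)$. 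You also need the observation that the strands-grading $\pm1$ summands of $\CFDAh(\sbdiff)$ are quasi-invertible bimodules over $\Ftwo$, hence quasi-isomorphic to the identity bimodule. These summands contribute the loops $t$ and $z$ at $\idem_\varnothing$ and $\idem_{\{0,1\}}$.
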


In particular, the isomorphisms show a subquotient relationship between 
Heegaard Floer homology of a $3$-manifold and Heegaard Floer homology of a 
double cover of that $3$-manifold. Other instances of this relationship were 
first observed by Hendricks \cite{Hen12}, and the relationship here is more 
specifically an instance of a result of \cite{LipTre16}.

The isomorphisms of the Theorem are natural, and to us suggest that 
$\bigprofintensor{X_S}\vsV$ is itself the sutured Heegaard Floer homology of \[
  (\varprojlim M_H, \varprojlim \partial M_H, \varprojlim \Gamma_H).
\] Here $\varprojlim M_H$ is a pro-$3$-manifold that $2$-adically approximates 
the Alexander cover of the knot exterior. Its boundary $\varprojlim \partial 
M_H$ is a pro-$2$-manifold that is the product of a solenoid with $S^1$, and 
$\varprojlim \Gamma_H$ is a disjoint union of two solenoids.  But there is no 
precedent definition for the sutured Heegaard Floer homology of such a 
pro-manifold and we do not formulate one in this paper.

\pagebreak[4]

\subsection{Questions}
\label{ssec:intro-q}

\subsubsection{}
\label{sssec:intro-q-other-rings}
Is there a reasonable extension of the magnetized and conditionally convergent 
tensor power of an $R$-module $\vsV$, if $R$ is a ring other than $\F_2$? We 
mentioned in \S~\ref{ssec:intro-vs-infin}--\ref{ssec:intro-vs-profin} that a 
basic obstacle is making sense of the product $\prod_{x \in X} c_x$ of an 
$X$-indexed collection of scalars $c_x$.  When $R = \Z/2^n$, there is a 
seemingly reasonable way to do this when the $c_x$'s depend in a locally 
constant fashion on $x$, or more particularly when they are constant on 
finitely many $G$-orbits for a pro-$2$-group $G$ acting on $X$: we can set
\[
  \prod_{x \in X} c_x = \begin{cases}
    1 & \text{if every $c_x$ is odd}, \\
    0 & \text{if any $c_x$ is even}.
  \end{cases}
\]
The justification for this formula is that, if the $c_x$'s are ``magnetized'' 
i.e.\ constant on $G$-orbits, then $\prod_{x \in X} c_x = \prod_{O} 
(c_O)^{\card(O)}$, where $O$ runs through the orbits. If you regard $\card(O)$ 
as an infinitely large power of $2$, then $c^{\card(O)}$ is $1$ or $0$ 
according to whether $c$ is odd or even.  If it works for $\Z/2^n$-modules, one 
can next think of getting it to work for $\Z_2$-modules and $\Q_2$--vector 
spaces.

\subsubsection{}
\label{sssec:intro-q-staircase}
The staircase picture \S~\ref{ssec:intro-staircase} suggests another 
construction for perfect rings of characteristic $2$. If $R$ is a ring of 
characteristic $2$, and $\vsV$ is a free $R$-module, in general it is not 
$\vsV$ but $F(\vsV)$ that appears as a subquotient of $\vsV \tensor_R \vsV$, 
where $F$ denotes the ``Frobenius twist'' that has the same underlying abelian 
group as $\vsV$, but whose $R$-module structure is via $r \cdot_F v := r^2 v$.  
If $R$ is perfect, then $F$ can be inverted, and $\vsV$ appears as a 
subquotient of $F^{-1} (\vsV \tensor \vsV)$, leading to a staircase picture 
where $F^{-n}$ has been applied to the $n$th row of the original staircase 
picture, and a kind of tensor power can be obtained by taking the direct limit 
of the inverse limit of the staircase:
\[
  \xymatrix{
    \ar[d] \vdots & \ar[d] \vdots & \ar[d] \vdots & \iddots \\
    \ast \ar[r] \ar[d] & F^{-2}(\Sym^2(\vsV^{\tensor 2})) \ar[r] \ar[d] & 
    F^{-2}(\vsV^{\tensor 4}) \\
    F^{-1}(\Sym^2(\vsV)) \ar[r] \ar[d] & F^{-1}(\vsV^{\tensor 2}) \\
    \vsV
  }
\]

\subsubsection{}
\label{sssec:intro-q-C2}
Can the two ideas \S~\ref{sssec:intro-q-other-rings} and 
\S~\ref{sssec:intro-q-staircase} be combined, perhaps, to obtain a definition 
of a tensor power of $\C_2$--vector spaces, or of modules over a more general 
perfectoid ring?

\subsubsection{}
\label{sssec:intro-q-hom-alg}
Can any homological algebra be done with $\bigprofintensor{X} \vsV$?  Suppose 
that $\vsV$ is a chain complex. One problem is that if $\vsV$ has any element 
of nonzero degree, then there are magnetized pure tensors in 
$\bigprofintensor{X} \vsV$ that have apparently ``infinite'' degree.  A second, 
probably more serious, problem, is that the formula for 
$\partial(\bigtensor_{X} v_x)$ required by the Leibniz rule is a sum of 
non-magnetized values, even if $\bigtensor_{X} v_x$ is magnetized.

\subsubsection{}
\label{sssec:intro-q-hf}
Is there a definition of the Heegaard Floer homology of a pro-$3$-manifold?

\subsection*{Acknowledgments} The authors thank John Baldwin and Robert 
Lipshitz for helpful conversations. CMMW thanks the number theory group at 
Dartmouth College in Spring 2022, including Eran Assaf, Asher Auel, Avi 
Kulkarni, Jack Petok, Ciaran Schembri, and John Voight, for patiently teaching 
him things $p$-adic in an inspiring seminar series.
CMMW acknowledges the support of the Natural Sciences and Engineering Research 
Council of Canada (NSERC), RGPIN-2023-05123.

\section{Tensor powers of vector spaces}
\label{sec:vs}

Throughout this section, $R$ denotes a commutative ring and $\vsV$ denotes a 
free $R$-module of finite rank. In the later parts of this section
(\S\S~\ref{ssec:vs-F2-inf-prods-sums}--\ref{ssec:vs-sector}), we assume that $R 
= \F_2$.  We use $X$ to stand for either a finite set, or a profinite set with 
a ``germ action'' (\S~\ref{ssec:vs-profin-sets}). A germ action is an 
equivalence class of group actions on $X$, we use $G$ to stand for one of the 
groups of this equivalence class. These groups are always profinite, and from 
\S~\ref{ssec:vs-F2-inf-prods-sums} on, they have an open subgroup of $2$-power 
order.

\subsection{Profinite sets with 
  \texorpdfstring{$\germ$-structures}{g-structures}}
\label{ssec:vs-profin-sets}
Let $X$ be a Hausdorff topological space. Let $\Homeo(X)$ denote the group of 
self-homeomorphisms of $X$ with its compact-open topology. Consider the set 
$\groups$ of all closed subgroups $G \subset \Homeo(X)$ that have both of the 
properties
\begin{enumerate}
  \item $G$ is profinite; and
  \item $G$ acts on $X$ with finitely many orbits.
\end{enumerate}
If $\groups$ is not empty, then $X$ has the topology of a profinite set: it is 
the disjoint union of its finitely many $G$-orbits, each of which is 
homeomorphic to $G/H$ for a closed subgroup $H \subgrp G$.

Two groups $G_1, G_2 \in \groups$ are ``commensurable'' if $G_1 \cap G_2$ is an 
open subgroup of both $G_1$ and of $G_2$---in that case $G_1 \cap G_2$ is 
another element of $\groups$. Commensurability is an equivalence relation on 
$\groups$ and we call an equivalence class for this equivalence relation a 
``germ action'' on $X$. 

If $G$ is a group in $\groups$ in the equivalence class $\germ$, we write $G 
\supset \germ$ and we sometimes say that $\germ$ is the germ of $G$. The 
inclusion relation makes the set of $G$ whose germ is $\germ$ into a filtered 
poset, in the sense that any two elements of the poset have a common lower 
bound.

If one of the $G \supset \germ$ has pro-$p$-power order, or equivalently if 
every $G \supset \germ$ has an open subgroup of pro-$p$-power order, we say 
that $\germ$ has pro-$p$-power order. Later on 
(\S~\ref{ssec:vs-F2-inf-prods-sums}), we will assume that $\germ$ has 
pro-$2$-power order.

It is useful to think of $\germ$ as an object that acts on $X$, as well as on 
other sets and spaces derived from $X$. If $Y$ is a topological space (often a 
discrete space, below), we will say that $\germ$ acts continuously on $Y$ if 
there is a $G \supset \germ$ that acts continuously on $Y$.

\subsection{Magnetized pure tensors}
\label{ssec:vs-mag}
If $B$ and $X$ are finite sets, let $B^X$ denote the set of functions from $X$ 
to $B$. If $B$ is a basis for a free $R$-module $\vsV$, then the finite set 
$B^X$ indexes the induced basis on the tensor power $\bigtensor_X \vsV$, i.e.\ 
the basis of pure tensors whose tensor factors are from $B$.

If $B$ is a finite set and $X$ is a \emph{profinite} set, then endow $B$ with 
the discrete topology and let $B^X$ denote the set of \emph{continuous} 
functions from $X$ to $B$. If $B$ is a basis for $\vsV$, then we call $B^X$ the 
set of ``magnetized pure tensors'' with tensor factors from $B$.

If $X$ has a germ action $\germ$, then every continuous function $f \colon X 
\to B$ is constant on $G$-orbits for every sufficiently small $G \supset 
\germ$.  In other words, if we write $G \backslash X$ for the finite set of 
orbits and $B^{G\backslash X} \subset \Fun(X,B)$ for the set of functions that 
factor through $G \backslash X$, then $B^X = \bigcup_{G \supset \germ} B^{G 
  \backslash X}$.
An equivalent description of $B^X$ uses the $\germ$-action on the set of all 
functions $X \to B$. Specifically, if $\Fun(X,B)$ denotes the set of all (not 
necessarily continuous) functions, and $G \supset \germ$, then we let $g \in G$ 
act on $f \in \Fun(X,B)$ by the formula
\[
  (g,f) \mapsto g \cdot f, \qquad (g \cdot f)(x):= f(g^{-1}(x)).
\]
Then $B^{G\backslash X} = \Fun(X,B)^G$, and
\begin{equation}
  \label{eq:BX-smooth}
  \basisB^X = \bigcup_{G \supset \germ} \Fun(X,\basisB)^{G}.
\end{equation}

\subsection{Conditionally convergent tensors}
\label{ssec:vs-cc}
A general tensor is a linear combination of pure tensors. If $P$ is any set, 
then the most general kind of formal $R$-linear combination of elements of $P$ 
is an element of the direct product $\prod_P R$, or equivalently an element of 
the set of functions $\Fun(P,R)$. The subset $\bigoplus_P R$ of finite linear 
combinations is the set $\Fun(P,R)^f$ of those functions $P \to R$ with finite 
support. When $P = B^X$ is the set of magnetized pure tensors with tensor 
factors from $B$, and $\germ$ is a germ action on $X$, then we can define an 
intermediate set
\begin{equation}
  \label{eq:fun-BX-R}
  \Fun(\basisB^X,R)^f \subset \Fun(\basisB^X,R)^{\cc} \subset \Fun(\basisB^X,R)
\end{equation}
that we call ``conditionally convergent'' magnetized tensors. Specifically, if 
$G \supset \germ$, then $g \in G$ acts on $\phi \in \Fun(\basisB^X,R)$ via
\[
  (g,\phi) \mapsto g \cdot \phi, \qquad (g\cdot \phi)(f) = \phi(g^{-1} \cdot 
  f).
\]
Another useful formula is $((g \cdot \phi)(f))(x) = \phi(f(gx))$. Then 
\begin{equation}
  \label{eq:fun-BX-R-cc}
  \Fun(\basisB^X,R)^\cc := \bigcup_{G \supset \germ} \Fun(\basisB^X,R)^G.
\end{equation}
In a way, this formula is similar to \eqref{eq:BX-smooth}.  The containment 
$\Fun(\basisB^X,R)^{\cc} \subset \Fun(\basisB^X,R)$ of \eqref{eq:fun-BX-R} is 
by definition. The containment $\Fun(\basisB^X,R)^f \subset 
\Fun(\basisB^X,R)^{\cc}$ is a consequence of the fact that any finite subset of 
$\basisB^X$ is contained in $\basisB^{G \backslash X}$ for some $G \supset 
\germ$: if $\phi \in \Fun(\basisB^X,R)$ is supported on such a finite subset, 
then $\phi$ also belongs to $\Fun(\basisB^X,R)^{G}$.

It is somewhat premature to call these tensors ``conditionally convergent'': 
later (\S~\ref{ssec:vs-F2-inf-prods-sums}), the condition will allow us to 
evaluate certain infinite sums, but only when $\germ$ is $2$-adic and $R = 
\F_2$. In the representation theory of $p$-adic groups, the subset 
$\Fun(\basisB^X,R)^{\cc} \subset \Fun(\basisB^X,R)$ would be called the set of 
``algebraic'' or ``smooth'' vectors \cite{BerZel76}.

\subsection{Change of basis}
\label{ssec:vs-change-of-basis}
If $B$ is a basis for the $R$-module $\vsV$, then we would like to treat 
$\Fun(\basisB^X,R)^{\cc}$ as the $X$-fold tensor power of $\vsV$. An immediate 
question is whether $\Fun(\basisB^X,R)^{\cc}$ depends on the basis $\basisB$ of 
$\vsV$ used to define it.

Let $\basisC$ be a second basis for $\vsV$. We would like the change of basis 
matrix to determine an isomorphism
\[
  \Fun(\basisB^X,R)^{\cc} \cong \Fun(\basisC^X,R)^{\cc}.
\]
Over \S\S~\ref{ssec:vs-cat-fin-matrices}--\ref{ssec:vs-func-mcc-2}, we will 
show that $\Fun^{\cc}$ has this and other functorial properties as long as $R$ 
is the field with $2$ elements and $\germ$ is $2$-adic.
In the more trivial case when $X$ is finite, $\germ$ plays no role and no 
condition on $R$ is necessary. In \S~\ref{ssec:vs-cat-fin-matrices} and 
\S~\ref{ssec:vs-func-fin-tensor}, we set up some notation for dealing with 
finite matrices $M$ and their tensor powers $M^{\tensor X}$ for general $R$ and 
finite $X$. The expression defining $M^{\tensor X}$ can be written down for 
infinite $X$, but it involves an infinite product and an infinite sum. In 
\S~\ref{ssec:vs-F2-inf-prods-sums} we will explain the role of our assumptions 
on $X$, $\germ$, and $R$ in making sense of this product and sum.

Once the germ action is fixed and $R = \F_2$, we suppress $\basisB$ from the 
notation and set
\[
  \bigprofintensor{X} \vsV := \Fun(\basisB^X,\F_2)^{\cc}.
\]
We will see that it is a functor from finite-dimensional $\F_2$--vector space 
to the category of all $\F_2$--vector spaces.

\subsection{Category whose morphisms are finite matrices}
\label{ssec:vs-cat-fin-matrices}
We will use $\Mat^R$ denote the natural category whose objects are finite sets 
and whose morphisms are matrices with entries from $R$. We will use the symbols 
$\basisB$, $\basisC$, \dots to denote finite sets; then $\Mat^R$ is the 
category whose objects are finite sets and whose morphisms 
$\basisB\xrightarrow{M}\basisC$ are functions
\begin{equation}
  \label{eq:M-CxB-R}
  M \colon \basisC \times \basisB \to R.
\end{equation}
We will call any such function a ``matrix.'' In \eqref{eq:M-CxB-R}, $\basisC$ 
is indexing the rows and $\basisB$ is indexing the columns of a matrix whose 
entries are the values $M(c,b)$ of $M$.

The composition law
\[
  \left[\basisB \xrightarrow{M} \basisC \xrightarrow{N} \basisD \right] \mapsto 
  \left[ \basisB \xrightarrow{NM} \basisD \right]
\]
is given by the usual formula for matrix multiplication (or convolution of 
kernels):
\begin{equation}
  \label{eq:NM-d-b}
  (NM)(d,b) = \sum_{c \in \basisC} N(d,c)M(c,b).
\end{equation}
The set of functions $\Fun(\basisB,R)$ is a free $R$-module with a natural 
basis indexed by $\basisB$, and the assignment \begin{equation}
  \label{eq:Fun-B-R}
  \basisB \mapsto \Fun(\basisB,R)
\end{equation}
is covariantly functorial for matrices $\basisB \xrightarrow{M} \basisC$ by the 
rule:
\begin{gather*}
  f \in \Fun(\basisB,R), \quad f \mapsto Mf \in \Fun(\basisC,R);\\
  (Mf)(c) = \sum_{b \in \basisB} M(c,b)f(b).
\end{gather*}
The functor \eqref{eq:Fun-B-R} is an equivalence between $\Mat^R$ and the full 
subcategory of $\Mod(R)$ spanned by the free $R$-modules of finite rank.

\subsection{Finite tensor powers as a functor}
\label{ssec:vs-func-fin-tensor}
Let $X$ be a finite set and, as in \S~\ref{ssec:vs-mag}, let $\basisB^X$ denote 
the set of functions from $X$ to $\basisB$.
Like \eqref{eq:Fun-B-R}, the assignment
\begin{equation}
  \label{eq:Fun-BX-R}
  \basisB \mapsto \Fun(\basisB^X,R)
\end{equation}
is also functorial for matrices, in a sense that we now explain. Since $X$ is 
finite, $\basisB^X$ is also finite and \eqref{eq:Fun-BX-R} also takes values in 
the category of free $R$-modules of finite rank. 

Given $\basisB \xrightarrow{M} \basisC$, define a map $M^{\tensor X} \colon 
\Fun(\basisB^X,R) \to \Fun(\basisC^X,R)$ by the rule
\begin{equation}
  \label{eq:MX-fin}
  (M^{\tensor X} \phi)(g) = \sum_{f \in \basisB^X} \phi(f) \prod_{x \in X} 
  M(g(x),f(x)).
\end{equation}
If $X$ has $n$ elements and we order them arbitrarily $x_1,x_2,\dotsc,x_n$, 
then we define a map
\begin{equation}
  \label{eq:Fun-B-R-in-ord}
  \Fun(\basisB^X,R) \to \Fun(\basisB,R) \tensor \dotsb \tensor \Fun(\basisB,R)
\end{equation}
to the tensor product of $n$ copies of $\Fun(\basisB,R)$. When $\phi \in 
\Fun(\basisB^X,R)$, \eqref{eq:Fun-B-R-in-ord} sends $\phi$ to
\[
  \sum_f \phi(f)\cdot \left(\delta_{f(x_1)} \tensor \delta_{f(x_2)} \tensor 
    \dotsb \tensor \delta_{f(x_n)}\right).
\]
where $\delta_b \colon \basisB \to R$ takes $b$ to $1$ and every other element 
of $\basisB$ to zero.
The map \eqref{eq:Fun-B-R-in-ord} is a natural isomorphism making the following 
triangle of functors commute:
\[
  \xymatrix{
    \Mat^R \ar[rr]^{\basisB \mapsto \Fun(\basisB,R)} \ar[dr]_-{\basisB \mapsto 
      \Fun(\basisB^X,R);\, M \mapsto M^{\tensor X} \qquad} & & 
    \Mod(R)\ar[dl]^{\qquad \vsV \mapsto \bigtensor_{i=1}^n \vsV} \\
    & \Mod(R)
  }
\]
When we wish to ignore the ordering of $X$, we may denote the codomain of 
\eqref{eq:Fun-B-R-in-ord} by $\bigtensor_X \Fun(\basisB,R)$. 

\subsection{Infinite products and infinite sums in \texorpdfstring{$\F_2$}{the 
    field of two elements}}
\label{ssec:vs-F2-inf-prods-sums}
If $X$ is profinite, we would like to define
\[
  M^{\tensor X} \colon Fun(\basisB^X,R)^{\cc} \to \Fun(\basisC^X,R)^{\cc}
\]
by the same formula \eqref{eq:MX-fin}. We encounter two problems: (1) it is not 
clear how to interpret the product of all the scalars $M(g(x),f(x))$, indexed 
by the uncountable set $X$, and (2) even if it were, it is not clear how to 
interpret the sum of all the scalars $\phi(f) \prod M(g(x),f(x))$, indexed by 
the infinite discrete set of $\phi \in \basisB^X$.

There is a peculiar way around problem (1) when $R$ is the field with $2$ 
elements, and a peculiar way around problem (2) when in addition $X$ is acted 
on, with finitely many orbits, by a pro-$2$-group. 

The rule for (2) depends only on the germ $\germ$ of the pro-$2$-group that is 
acting.

\begin{proposition*}
  Suppose that $R$ has characteristic $2$, that $\germ$ has pro-2-power order, 
  and that $F$ is a discrete set with a continuous $\germ$-action.  Recall this 
  means that some $G \supset \germ$ acts continuously on $F$.  Suppose also 
  that $F$ has the following property:
  \begin{quote}
    For all $G \supset \germ$ sufficiently small, the fixed-point set $F^G$ is 
    finite.
  \end{quote}
  Then for any given $\phi \in \Fun(F,R)^{\cc}$, the formula
  \begin{equation}
    \label{eq:smith-sum}
    \phi \mapsto \Sigma(\phi) := \text{the finite sum } \sum_{f \in F^G} 
    \phi(f)
  \end{equation}
  takes the same value for all $G \supset \germ$ that fixes $\phi$, i.e.\ it 
  defines an $R$-linear map
  \[
    \Sigma \colon \Fun(F,R)^{\cc} := \bigcup_{G \supset \germ} \Fun(F,R)^{G} 
    \to R.
  \]
\end{proposition*}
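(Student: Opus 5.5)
The plan is to reduce to comparing two nested groups. Suppose $G_1, G_2 \supset \germ$ both fix $\phi$ and are small enough that $F^{G_1}$ and $F^{G_2}$ are finite. Since the poset of groups with germ $\germ$ is filtered, $G_3 := G_1 \cap G_2$ is again in $\groups$, has germ $\germ$, fixes $\phi$, and is open in both $G_1$ and $G_2$. It therefore suffices to prove the following: if $K \subset G$ is an open subgroup, both fix $\phi$, and $F^K$ is finite, then
\[
  \sum_{f \in F^K} \phi(f) = \sum_{f \in F^G} \phi(f).
\]
Applying this to the pairs $G_3 \subset G_1$ and $G_3 \subset G_2$ gives the claim.

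Next I would shrink to the pro-$2$ case. Since $\germ$ has pro-$2$-power order, I may replace $G$ by an open pro-$2$ subgroup $P$ and $K$ by $K \cap P$. The subtlety is that $F^P$ may be larger than $F^G$. To avoid this, I would first reduce to the case where $G$ itself is pro-$2$: use the existence of an open pro-$2$ subgroup and compare each group with its intersection with it, which is the same nested situation. So I need the nested lemma only when $G$ is pro-$2$, and I would state the general step as: for every $G$ with finite fixed set, compare $G$ with a pro-$2$ open subgroup. This comparison still needs an argument when $G$ is not pro-$2$. I expect this to be the main obstacle, and I would look for the intended reading, probably that "sufficiently small" lets us restrict to pro-$2$ $G$ throughout. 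If $G$ is not pro-$2$, the sums can genuinely differ: take $G = \Z/3$ permuting $F$ of size $3$ with $\phi \equiv 1$. So the proposition must implicitly quantify over sufficiently small $G$, which are pro-$2$. I will take that as the meaning.

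The core lemma is then the following. Let $K \subset G$ be open with $G$ pro-$2$, and let $\phi$ be $G$-invariant. Then $F^K$ is finite and $G$-stable, because $K$ has an open normal subgroup $N \trianglelefteq G$ contained in $K$, and $F^K \subset F^N$, which is $G$-stable. The action of $G$ on the finite set $F^K$, or on $F^N$ (finite after shrinking $N$ using the hypothesis), factors through the finite $2$-group $G/N$. I would work with $N$ directly: partition $F^N$ into $G/N$-orbits. Each orbit has size a power of $2$, and $\phi$ is constant on it because $\phi$ is $G$-invariant. In characteristic $2$, orbits of size $>1$ therefore contribute $0$, so
\[
  \sum_{F^N} \phi = \sum_{F^G} \phi.
\]
The same argument for $K$ acting through $K/N$ gives
\[
  \sum_{F^N} \phi = \sum_{F^K} \phi.
\]
Together these prove the nested lemma. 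Linearity of $\Sigma$ then follows because, for two elements of $\Fun(F,R)^{\cc}$, I can choose a common $G$ fixing both, and the finite sum over $F^G$ is linear.
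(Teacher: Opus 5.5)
Your argument is correct and is essentially the paper's. Both compare the two sums with $\sum_{f \in F^N}\phi(f)$ for a small open normal subgroup $N$, and let the non-fixed points cancel because a finite $2$-group acts on $F^N$; your orbit-size count is the rigorous form of the paper's pairing $f \leftrightarrow g_1 f$ (a single $g_1$ need not move every point of $F^N \setminus F^{G_1}$), and your caveat that $G_1, G_2$ must themselves be pro-$2$ is implicit in the paper too, which uses that $G_1/N$ and $G_2/N$ are $2$-groups. Drop the unused claim that $F^K$ is $G$-stable: it fails when $K$ is not normal in $G$, since $gF^K = F^{gKg^{-1}}$.
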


\begin{proof}
  If $\phi$ is both $G_1$-fixed and $G_2$-fixed, then
  \[
    \sum_{f \in F^{G_1}} \phi(f) = \sum_{f \in F^{G_2}} \phi(f)
  \]
  because both sums are equal to $\sum_{f \in F^N} \phi(f)$ for any open 
  subgroup $N$ that is normal in both $G_1$ and $G_2$. The reason is that (as 
  both $G_1/N$ and $G_2/N$ are finite $2$-groups) there is always a $g_1 \in 
  G_1 \setminus N$ such that $g_1^2 \in N$, and then $\phi(f)$ and $\phi(g_1 
  f)$ cancel each other in
  \[
    \sum_{f \in F^N \setminus F^{G_1}} \phi(f).
  \]
  Similarly, $\phi(f)$ and $\phi(g_2 f)$ cancel each other in $\sum_{f \in F^N 
    - F^{G_2}} \phi(f)$.
\end{proof}

Because of the Proposition above, we denote $\Sigma (\phi)$ by
\[
  \sum_{f \in F} \phi (f).
\]
In particular, we can take $F = \basisB^X$, the set of magnetized pure tensors: 
$(\basisB^X)^{G} \cong \basisB^{G \backslash X}$ is a finite set for all $G 
\supset \germ$, because $B$ and $G \backslash X$ are finite sets.

The rule for (1) is more crude: if $X$ is an arbitrary set (profinite or not), 
and $(m_x)_{x \in X}$ is a set of mod $2$ integers (i.e.\ $m_x \in \F_2$) 
indexed by $x \in X$, define
\begin{equation}
  \label{eq:inf-prod}
  \prod_{x \in X} m_x = \begin{cases}
    1 & \text{if $m_x = 1$ for all $x$}, \\
    0 & \text{if there is some $x$ such that $m_x$ = 0}.
  \end{cases}
\end{equation}
The conditions on the scalars $(m_x \in R)_{x \in X}$ of \eqref{eq:inf-prod} 
are mutually exclusive if and only if $R = \F_2$.

Unlike the rule for evaluating infinite sums, this rule for evaluating infinite 
products does not work when $\F_2$ is replaced by a larger ring $R$ of 
characteristic $2$. In what follows we will assume that $R = \F_2$.

\subsection{Functoriality of \texorpdfstring{$\Fun((-)^X,\F_2)^{\cc}$}{the 
    profinite tensor product}, I}
\label{ssec:vs-func-mcc-1}
Let $M \colon \graphC \times \graphB \to \F_2$. We will make use of the map
\begin{equation*}
  \label{eq:smith-sum-BX}
  \Sigma \colon \Fun(\basisB^X,\F_2)^{\cc} \to \F_2
\end{equation*}
we have defined in \eqref{eq:smith-sum}, with $F = \basisB^X$, to define the 
map $M^{\tensor X} \phi$ in \eqref{eq:MX-fin}. To do so, we have to verify that
\begin{enumerate}
  \item for every $g \in \basisC^X$, the function $f \mapsto \phi(f) \prod_x 
    M(g(x),f(x))$ belongs to $\Fun(\basisB^X,\F_2)^{\cc}$ and
  \item the resulting function $g \mapsto (M^{\tensor X} \phi)(g)$ is an 
    element of $\Fun(\basisC^X,\F_2)^{\cc}$
\end{enumerate}

The Proposition below states this more precisely; it proves that the assignment 
$M \mapsto M^{\tensor X}$ is well-defined for every finite matrix $[\basisB 
\xrightarrow{M} \basisC]$. In the next subsection we will verify that 
$N^{\tensor X} \circ M^{\tensor X} = (NM)^{\tensor X}$.

\begin{proposition*}
  Let $\basisB$ and $\basisC$ be finite sets, and let $M \colon \basisC \times 
  \basisB \to \F_2$ be a matrix. If $\phi \in \Fun(\basisB^X,\F_2)^{\cc}$, then 
  for every $g \in \basisC^X$, the function
  \[
    f \mapsto \phi(f) \prod_{x \in X} M(g(x),f(x))
  \]
  is fixed by some $G \supset \germ$, and hence is also in 
  $\Fun(\basisB^X,\F_2)^{\cc}$.  Furthermore, the function
  $
  M^{\tensor X} \phi \colon \basisC^X \to \F_2  $ defined by
  \[
    (M^{\tensor X} \phi)(g) := \sum_{f \in \basisB^X} \phi(f) \prod_{x \in X} 
    M(g(x),f(x))
  \]
  belongs to $\Fun(\basisC^X,\F_2)^{\cc}$.
\end{proposition*}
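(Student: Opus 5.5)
Both claims come down to one observation about the diagonal action of $G$ on pairs. Since $\phi \in \Fun(\basisB^X,\F_2)^{\cc}$, it is fixed by some $G_0 \supset \germ$. Since $g \in \basisC^X$ is continuous, it lies in $\basisC^{G_1 \backslash X} = \Fun(X,\basisC)^{G_1}$ for some $G_1 \supset \germ$ by \eqref{eq:BX-smooth}. The filtered-poset property gives $G \supset \germ$ with $G \subset G_0 \cap G_1$.

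For the first claim, I would show that $\psi_g(f) := \phi(f)\prod_{x} M(g(x),f(x))$ is $G$-fixed. Take $h \in G$. Then $(h\cdot\psi_g)(f) = \psi_g(h^{-1}\cdot f)$, which equals $\phi(h^{-1}\cdot f)\prod_x M(g(x), f(hx))$. Reindex the product by $y = hx$; this is a bijection of $X$, and the crude rule \eqref{eq:inf-prod} is invariant under bijections of the index set, since it only asks whether every factor is $1$. The product becomes $\prod_y M(g(h^{-1}y), f(y)) = \prod_y M(g(y),f(y))$, using $g(h^{-1}y) = g(y)$ because $g$ is $G$-invariant. Also $\phi(h^{-1}\cdot f) = \phi(f)$. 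So $\psi_g$ is $G$-fixed, lies in $\Fun(\basisB^X,\F_2)^{\cc}$, and the previous Proposition applies with $F = \basisB^X$. Its hypothesis holds because $(\basisB^X)^{G'} \cong \basisB^{G'\backslash X}$ is finite. This defines $(M^{\tensor X}\phi)(g) = \Sigma(\psi_g)$.

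For the second claim, I need a single $G \supset \germ$ fixing $M^{\tensor X}\phi$ for all $g$ at once, with no dependence on $g$. The natural choice is the $G_0$ fixing $\phi$. For $h \in G_0$, compute $(h\cdot M^{\tensor X}\phi)(g) = (M^{\tensor X}\phi)(h^{-1}\cdot g) = \Sigma(\psi_{h^{-1}\cdot g})$. By the same reindexing, $\psi_{h^{-1}\cdot g}(f) = \psi_g(h\cdot f)$ (using $\phi(h^{-1}\cdot f') = \phi(f')$), that is, $\psi_{h^{-1}\cdot g} = h^{-1}\cdot \psi_g$. It therefore suffices to show that $\Sigma$ is invariant under translating its argument by an element $h \in G_0$.

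This invariance is the step I expect to be the main obstacle, because $\Sigma$ was defined through fixed-point sets and not through a genuine sum over all of $\basisB^X$. Here is how I would handle it. Choose $G' \subset G_0$ with $G' \supset \germ$ that fixes $\psi_g$; the set $\psi_{h^{-1}g}$ is then fixed by $h^{-1}G'h$. Take $N$ open normal in $G_0$ and contained in $G'$, so $N$ fixes both functions; $N$ is still in the germ. By the previous Proposition, $\Sigma(h^{-1}\cdot\psi_g) = \sum_{f\in F^N}\psi_g(hf)$. Since $N$ is normal in $G_0$ and $h \in G_0$, the map $f \mapsto hf$ is a bijection of $F^N$, so this sum equals $\sum_{f\in F^N}\psi_g(f) = \Sigma(\psi_g)$. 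Hence $M^{\tensor X}\phi$ is $G_0$-fixed and lies in $\Fun(\basisC^X,\F_2)^{\cc}$.
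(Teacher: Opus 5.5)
Your proof is correct and follows essentially the same route as the paper. Like the paper, you take the intersection of the stabilizers of $g$ and $\phi$ for the first claim, use the group fixing $\phi$ for the second, and reindex both the product over $X$ and the sum over $\basisB^X$. The one difference is that the paper reindexes the sum $\Sigma$ by $f \mapsto h^{-1} f$ without comment, whereas you justify that step by passing to an open subgroup $N$ normal in the stabilizer of $\phi$, which makes explicit why translation by $h$ leaves $\Sigma$ unchanged.
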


\begin{proof}
  Since $g \colon X \to \basisC$ is continuous, it is fixed by some $G'_g 
  \supset \germ$. Since $\phi \in \Fun(\basisB^X,\F_2)^{\cc}$, it is fixed by 
  some $G''_\phi \supset \germ$. The intersection $G = G_{g, \phi} := G'_g \cap 
  G''_\phi$ is another group whose germ is $\germ$. We claim that the map
  \[
    f \mapsto \phi(f) \prod_{x \in X} M(g(x),f(x))
  \]
  is equal to its translate by $h$ for every $h \in G_{g, \phi}$. To spell this 
  out, let us write $\sigma_g(f)$ for this function:
  \[
    \sigma_g(f) = \phi(f) \prod_{x \in X} M(g(x),f(x)) = \begin{cases}
      \phi(f) &\text{if } M(g(x),f(x)) = 1 \text{ for all } x \in X,\\
      0 & \text{otherwise}.
    \end{cases}
  \]
  Then $h \sigma_g$ is given by
  \[
    h \sigma_g(f) = \sigma_g(h^{-1}f) = \begin{cases}
      \phi(h^{-1} f) & \text{if } M(g(x),f(hx)) = 1  \text{ for all } x \in 
      X,\\
      0 & \text{otherwise}.
    \end{cases}
  \]
  (In the condition for the first case, we have used $(h^{-1} f)(x):= f(hx)$.) 
  Since $\phi$ is $G''_\phi$-invariant and $h \in G''_\phi$, we have 
  $\phi(h^{-1} f) = \phi(f)$.  Since $g(x)$ is $G'_g$-invariant and $h \in 
  G'_g$, we have $g(x) = g(hx)$.  Since $h$ is a bijection from $X$ to $X$, the 
  condition that $M(g(hx),f(hx)) =1$ for all $x \in X$ is equivalent to the 
  condition that $M(g(x),f(x)) = 1$ for all $x \in X$.  Thus, $h \sigma_g(f) = 
  \sigma_g(f)$.

  Thus, the map $g \mapsto \phi(f) \prod_{x \in X} M (g (x), f (x))$ is a 
  function from $C^X$ to $\Fun (\basisB^X, \F_2)^{\cc}$. It follows from the 
  Proposition in \S~\ref{ssec:vs-F2-inf-prods-sums} that
  \[
    \phi(f) \prod_{x \in X} M (g (x), f (x)) \mapsto M^{\tensor X} \phi(g) = 
    \sum_{f \in B^X} \phi(f) \prod_{x \in X} M (g (x), f (x))
  \]
  is a well-defined function from $\Fun(\basisB^X,\F_2)^{\cc}$ to $\F_2$.  
  Composing the two, we have that $g \mapsto M^{\tensor X} \phi(g)$ is a 
  well-defined function from $C^X$ to $\F_2$, i.e.\ $M^{\tensor X} \phi \in 
  \Fun (C^X, \F_2)$.  To complete the proof, we will show that if $\phi \in 
  \Fun(\basisB^X,\F_2)^{G''_\phi}$, then $M^{\tensor X} \phi$ belongs to 
  $\Fun(\basisC^X,\F_2)^{G''_\phi}$. Indeed, when $h \in G''_\phi$,
  \[
    \begin{split}
      (h (M^{\tensor X} \phi))(g) & = (M^{\tensor X} \phi)(h^{-1} g) \\
      & =  \sum_{f \in \basisB^X} \phi(f) \prod_{x \in X} M(g(hx),f(x)) \\
      & =  \sum_{f \in \basisB^X} \phi(h^{-1} f) \prod_{x \in X} M(g(hx),f(hx)) 
      \\
      & =  \sum_{f \in \basisB^X} \phi(h^{-1} f) \prod_{x \in X} M(g(x),f(x)) 
      \\
      & =  \sum_{f \in \basisB^X} \phi(f) \prod_{x \in X} M(g(x),f(x)),
    \end{split}
  \]
  which is the definition of $M^{\tensor X} \phi (g)$. In the third step, we 
  have reindexed the sum over $f$ to be a sum over $h^{-1} f$. In the fourth 
  step, we have reindexed the product over $x$ to be a product over $h^{-1} x$.  
  In the last step, we have used $\phi(f) = \phi(h^{-1} f)$.
\end{proof}

\subsection{Functoriality of \texorpdfstring{$\Fun((-)^X,\F_2)^{\cc}$}{the 
    profinite tensor product}, II}
\label{ssec:vs-func-mcc-2}
Let $\left[\basisB \xrightarrow{M} \basisC\right]$ and $\left[\basisC 
  \xrightarrow{N} \basisD\right]$ be morphisms in $\Mat^{\F_2}$ and let $\left[ 
  \basisB \xrightarrow{NM} \basisD\right]$ be their composite. That is,
\[
  NM(d,b) = \sum_{c \in \basisC} N(d,c)M(c,b).
\]
By the Proposition of \S~\ref{ssec:vs-func-mcc-1}, they induce linear maps
\begin{equation}
  \label{eq:Fun-cc-MX-NX}
  \Fun(\basisB^X,\F_2)^{\cc} \xrightarrow{M^{\tensor 
      X}}\Fun(\basisC^X,\F_2)^{\cc} \xrightarrow{N^{\tensor X}} 
  \Fun(\basisD^X,\F_2)^\cc
\end{equation}
and
\begin{equation}
  \label{eq:Fun-CC-NMX}
  \Fun(\basisB^X,\F_2)^{\cc} \xrightarrow{(NM)^{\tensor X}} 
  \Fun(\basisD^X,\F_2)^\cc.
\end{equation}

\subsection*{Proposition} The composition of the maps in  
\eqref{eq:Fun-cc-MX-NX} is equal to \eqref{eq:Fun-CC-NMX}.

\begin{proof}
  Let $\phi \in \Fun(\basisB^X,\F_2)^{\cc}$. We will derive equal expressions 
  for $N^{\tensor X}(M^{\tensor X}(\phi))$ and $(NM)^{\tensor X}(\phi)$.

  First,
  the function $N^{\tensor X}(M^{\tensor X}(\phi)) \colon \basisD^X \to \F_2$ 
  is given by
  \begin{equation}
    \label{eq:NX-MX-on-h}
    h \mapsto \sum_{g \in \basisC^X} (M^{\tensor X}\phi(g)) \prod_{x \in X} 
    N(h(x),g(x)).
  \end{equation}
  Since $M^{\tensor X}\phi \in \Fun (\basisC^X, \F_2)^{\cc}$, the first 
  statement of the Proposition of \S~\ref{ssec:vs-func-mcc-1} says that the 
  function
  \[
    g\mapsto  (M^{\tensor X}\phi(g)) \prod_{x \in X} N(h(x),g(x))
  \]
  is constant on $G$-orbits for some  $G = G_{h,M^{\tensor X} \phi} = 
  G_{h,\phi}$ whose germ is $\germ$.  By definition, the infinite sum in 
  \eqref{eq:NX-MX-on-h} is equal to the finite sum
  \[
    \sum_{g \in (\basisC^X)^{G_{h,\phi}}} (M^{\tensor X}\phi(g)) \prod_{x \in 
      X} N(h(x),g(x)).
  \]
  Meanwhile, the $M^{\tensor X}\phi(g)$ factor of each summand is given by
  \begin{equation}
    \label{eq:MX-phi-g}
    \sum_{f \in \basisB^X} \phi(f) \prod_{x \in X} M(g(x),f(x))
    =
    \sum_{f \in (\basisB^X)^{G'_{g,\phi}}} \phi(f) \prod_{x \in X} 
    M(g(x),f(x)).
  \end{equation}
  for some $G'_{g,\phi} \supset \germ$, again by the Proposition of 
  \S~\ref{ssec:vs-func-mcc-1}. Set $G'_{h,\phi} := \bigcap_{g \in 
    (\basisC^X)^{G_{h,\phi}}} G'_{g,\phi}$. Since $ (\basisC^X)^{G_{h,\phi}}$ 
  is finite, $G'_{h,\phi}$ is open in every $G'_{g,\phi}$ and has the same germ 
  $\germ$.  Therefore, \eqref{eq:MX-phi-g} is equal to the finite sum
  \[
    \sum_{f \in (\basisB^X)^{G'_{h,\phi}}} \phi(f) \prod_{x \in X} 
    M(g(x),f(x)),
  \]
  and the index set $(\basisB^X)^{G'_{h,\phi}}$ is independent of $g$.  
  Altogether, we conclude that the value of $N^{\tensor X}(M^{\tensor 
    X}(\phi))$ on $h$ is
  \[
    \begin{split}
      & \phantom{{}=} \sum_{g \in (\basisC^X)^{G_{h,\phi}}} \left(\sum_{f \in 
          (\basisB^X)^{G'_{h,\phi}}} \phi(f) \prod_{x \in X} 
        M(g(x),f(x))\right) \prod_{x \in X} N(h(x),g(x))\\
      & = \sum_{g \in (\basisC^X)^{G_{h,\phi}}} \sum_{f \in 
        (\basisB^X)^{G'_{h,\phi}}} \phi (f) \prod_{x \in X} N(h(x),g(x)) 
      M(g(x),f(x)).
    \end{split}
  \]
  Separately, we know that $h \colon X \to D$ is constant on the orbits of 
  another group $G''_h \supset \germ$. Set $H_{h,\phi} := G_{h,\phi} \cap 
  G'_{h,\phi} \cap G''_h$; then $H_{h,\phi}$ also has the germ $\germ$.  By the 
  Proposition of \S~\ref{ssec:vs-F2-inf-prods-sums}, we may replace the index 
  sets $(C^X)^{G_{h,\phi}}$ and $(B^X)^{G'_{h,\phi}}$ in the two finite sums 
  above by $(C^X)^{H_{h,\phi}}$ and $(B^X)^{H_{h,\phi}}$. Then, since $f$, $g$, 
  and $h$ are all fixed by $H_{h,\phi}$, we may replace the infinite index set 
  $X$ in the product by the finite index set $\lmod{X}{H_{h,\phi}}$. Combining 
  everything, we see that the sum in \eqref{eq:NX-MX-on-h} can be written as
  \[
    \sum_{g \in (\basisC^X)^{H_{h,\phi}}} \sum_{f \in (\basisB^X)^{H_{h,\phi}}} 
    \phi(f) \prod_{x \in \lmod{X}{H_{h,\phi}}} N(h(x),g(x)) M(g(x),f(x)),
  \]
  an expression that involves only finite sums and products.

  We now turn to $(NM)^{\tensor X} (\phi) \colon \basisD^X \to \F_2$, which is 
  given by
  \[
    \begin{split}
      h \mapsto & \phantom{{}=} \sum_{f \in \basisB^X} \phi(f) \prod_{x \in X} 
      (NM)(h(x),f(x))\\
      & = \sum_{f \in \basisB^X} \phi(f) \prod_{x \in X} \left(\sum_{c \in 
          \basisC} N(h(x),c)M(c,f(x))\right).
    \end{split}
  \]
  By the Proposition of \S~\ref{ssec:vs-func-mcc-1}, the above is equal to the 
  finite sum
  \[
    \sum_{f \in (\basisB^X)^{G_{h,\phi}}} \phi(f) \prod_{x \in X} \left(\sum_{c 
        \in \basisC} N(h(x),c)M(c,f(x))\right),
  \]
  where $G_{h,\phi}$ is the same group as in the previous paragraph. Consider 
  the group $H_{h,\phi}$ defined in the previous paragraph, which is an open 
  subgroup of $G_{h,\phi}$ and also fixes $h \colon X \to D$. By the 
  Proposition of \S~\ref{ssec:vs-F2-inf-prods-sums}, the index set 
  $(\basisB^X)^{G_{h,\phi}}$ in the sum above can be replaced by the index set 
  $(\basisB^X)^{H_{h,\phi}}$. Also, the infinite index set $X$ in the product 
  can be replaced by the finite index set $\lmod{X}{H_{h,\phi}}$. Therefore, 
  the expression above is equal to
  \[
    \sum_{f \in (\basisB^X)^{H_{h,\phi}}} \phi(f) \prod_{x \in 
      \lmod{X}{H_{h,\phi}}} \left(\sum_{c \in \basisC} 
      N(h(x),c)M(c,f(x))\right),
  \]
  in which all index sets are finite. For the finite product of finite sums in 
  the expression, we have the distributive property\footnote{The more general 
    fact is that, for finite sets $I$ and $J$,
    \[
      \prod_{i \in I} \left(\sum_{j \in J} a(i,j)\right) = \sum_{f \colon I \to 
        J} \left(\prod_{i \in I} a(i,f(i))\right).
    \]}
  \[
    \prod_{x \in \lmod{X}{H_{h,\phi}}} \left(\sum_{c \in \basisC} 
      N(h(x),c)M(c,f(x))\right)
    =
    \sum_{g \in \basisC^{\lmod{X}{H_{h,\phi}}}} \left(\prod_{x \in 
        \lmod{X}{H_{h,\phi}}} N(h(x),g(x)) M(g(x),f(x))\right).
  \]
  Thus, the value of $(NM)^{\tensor X} (\phi)$ on $h$ is equal to
  \[
    \begin{split}
      & \phantom{{}=} \sum_{f \in (\basisB^X)^{H_{h,\phi}}} \left(\sum_{g \in 
          \basisC^{\lmod{X}{H_{h,\phi}}}} \phi(f) \prod_{x \in 
          \lmod{X}{H_{h,\phi}}} N(h(x),g(x)) M(g(x),f(x))\right)\\
      & = \sum_{f \in (\basisB^X)^{H_{h,\phi}}} \left(\sum_{g \in 
          (\basisC^X)^{H_{h,\phi}}} \phi(f) \prod_{x \in \lmod{X}{H_{h,\phi}}} 
        N(h(x),g(x)) M(g(x),f(x))\right)\\
      & = \sum_{g \in (\basisC^X)^{H_{h,\phi}}} \left(\sum_{f \in 
          (\basisB^X)^{H_{h,\phi}}} \phi(f) \prod_{x \in \lmod{X}{H_{h,\phi}}} 
        N(h(x),g(x)) M(g(x),f(x))\right),
    \end{split}
  \]
  matching that of the value of $N^{\tensor X} (M^{\tensor X} (\phi))$ on $h$.
\end{proof}

\subsection{Staircase picture}
\label{ssec:vs-staircase}
The formula \eqref{eq:fun-BX-R-cc} defining $\Fun(\basisB^X,\F_2)^{\cc}$ as a 
union of $\colgrp$-invariant subspaces can also be written as a direct 
limit:\footnote{We write $\colgrp \supset \germ$ instead of $\grpG \supset 
  \germ$ in this subsection to be consistent with 
  \S~\ref{ssec:intro-staircase}.}
\begin{equation}
  \label{eq:mag-dir-lim}
  \Fun(\basisB^X,\F_2)^{\cc}:=\bigcup_{\colgrp \supset \germ} 
  \Fun(\basisB^X,\F_2)^{\colgrp} = \varinjlim_{\colgrp\supset \germ} 
  \Fun(\basisB^X,\F_2)^{\colgrp}.
\end{equation}
The direct limit, which may be taken in the category of sets or in the category 
of $\F_2$--vector spaces, is indexed by the opposite of the poset of groups 
whose germ is $\germ$.

\begin{proposition*}
  Each term $\Fun(\basisB^X,\F_2)^{\colgrp}$ in the direct limit 
  \eqref{eq:mag-dir-lim} is itself an inverse limit of finite-dimensional 
  $\F_2$--vector spaces: when $\rowgrp$ runs through the \emph{normal} open 
  subgroups of $\colgrp$, the restriction maps \begin{equation}
    \label{eq:vert}
    \Fun(\basisB^X,\F_2)^{\colgrp} \to \Fun(\basisB^{\rowgrp\backslash 
      X},\F_2)^{\colgrp}
  \end{equation}
  assemble to an isomorphism
  \begin{equation}
    \label{eq:profin-subsets}
    \Fun(\basisB^X,\F_2)^{\colgrp} \cong \varprojlim_{\rowgrp \onsubgrp 
      \colgrp} \left(
      \Fun(\basisB^{\rowgrp \backslash X},\F_2)^{\colgrp}\right).
  \end{equation}
\end{proposition*}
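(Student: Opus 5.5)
The plan is to identify $\basisB^X$ with a directed union of the finite sets $\basisB^{\rowgrp \backslash X}$, where $\rowgrp$ runs over the open normal subgroups of $\colgrp$. The isomorphism \eqref{eq:profin-subsets} then follows from two formal facts: $\Fun(-,\F_2)$ turns a directed union into an inverse limit, and taking $\colgrp$-invariants commutes with inverse limits.

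First, make sense of the maps \eqref{eq:vert}.
\begin{itemize}
  \item For $\rowgrp \onsubgrp \colgrp$, the finite set $\basisB^{\rowgrp\backslash X} = \Fun(X,\basisB)^{\rowgrp}$ is a subset of $\basisB^X$.
  \item Because $\rowgrp$ is normal in $\colgrp$, the subset $\basisB^{\rowgrp\backslash X}$ is stable under the $\colgrp$-action on $\basisB^X$: if $f$ is $\rowgrp$-invariant and $h \in \colgrp$, then $h\cdot f$ is $h\rowgrp h^{-1} = \rowgrp$-invariant.
  \item Hence restriction of functions $\phi \mapsto \phi|_{\basisB^{\rowgrp\backslash X}}$ is $\colgrp$-equivariant. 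It sends $\Fun(\basisB^X,\F_2)^{\colgrp}$ into $\Fun(\basisB^{\rowgrp\backslash X},\F_2)^{\colgrp}$.
  \item For $\rowgrp' \subset \rowgrp$ we have $\basisB^{\rowgrp\backslash X} \subset \basisB^{\rowgrp'\backslash X}$. Restrictions compose, so the maps \eqref{eq:vert} form a compatible cone over the inverse system.
  \item The index poset is directed: $\rowgrp \cap \rowgrp'$ is again open and normal in $\colgrp$.
\end{itemize}

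The key step is cofinality: $\basisB^X = \bigcup_{\rowgrp \onsubgrp \colgrp} \basisB^{\rowgrp\backslash X}$.
\begin{itemize}
  \item Let $f \in \basisB^X$. By \eqref{eq:BX-smooth}, $f$ is fixed by some $G \supset \germ$.
  \item $G$ and $\colgrp$ are commensurable, so $G \cap \colgrp$ is open in $\colgrp$.
  \item An open subgroup of a profinite group contains an open normal subgroup, namely its core $\rowgrp = \bigcap_{h \in \colgrp} h(G\cap\colgrp)h^{-1}$. This is a finite intersection because $G\cap \colgrp$ has finite index.
  \item Then $f \in \Fun(X,\basisB)^{\rowgrp} = \basisB^{\rowgrp\backslash X}$.
\end{itemize}
This is the only place where the germ structure enters, and I expect it to be the main (mild) obstacle. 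One must be careful that the groups $\rowgrp$ so produced are genuinely indexed by the normal open subgroups of the fixed $\colgrp$, rather than by arbitrary $G \supset \germ$.

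With the directed-union description in hand, a function on $\basisB^X$ is the same as a compatible family of functions on the pieces $\basisB^{\rowgrp\backslash X}$. Since $\Fun(-,\F_2)$ sends colimits of sets to limits, this gives
\[
  \Fun(\basisB^X,\F_2) \cong \varprojlim_{\rowgrp \onsubgrp \colgrp} \Fun(\basisB^{\rowgrp\backslash X},\F_2).
\]
The isomorphism is $\colgrp$-equivariant, because the transition maps and the cone maps are all restrictions along $\colgrp$-equivariant inclusions. Taking $\colgrp$-invariants is a limit, so it commutes with $\varprojlim$. Taking invariants on both sides therefore yields \eqref{eq:profin-subsets}.

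Finally, the spaces $\Fun(\basisB^{\rowgrp\backslash X},\F_2)^{\colgrp}$ are finite-dimensional, because $\basisB$ and $\rowgrp\backslash X$ are finite. This justifies the phrase ``inverse limit of finite-dimensional $\F_2$--vector spaces'' in the statement.
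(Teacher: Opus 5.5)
Your proposal is correct and follows essentially the same route as the paper: write $\basisB^X$ as the directed union of the $\basisB^{\rowgrp\backslash X}$ over $\rowgrp \onsubgrp \colgrp$ (cofinality of normal open subgroups), then use that $\Fun(-,\F_2)$ turns this colimit into a limit and that taking $\colgrp$-invariants commutes with limits. You also spell out details the paper leaves implicit, namely the core argument for cofinality and the $\colgrp$-stability of $\basisB^{\rowgrp\backslash X}$ (this is where normality is used), which is what makes the restriction maps land in $\colgrp$-invariants.
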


\begin{proof}
  By definition, $\basisB^X$ is the union over open subgroups $\colgrp \supset 
  \germ$ of the finite sets $\basisB^{\colgrp\backslash X}$. We may write this 
  union as a direct limit $\varinjlim_{\colgrp\supset \germ} \basisB^{\colgrp 
    \backslash X}$, where $\colgrp$ runs through the opposite of the poset of 
  groups whose germ is $\germ$. For a fixed $\colgrp \supset \germ$, the 
  \emph{normal} open subgroups $\rowgrp \onsubgrp \colgrp$ are cofinal in this 
  poset, and so we also have
  \[
    \basisB^X = \varinjlim_{\rowgrp \onsubgrp \colgrp} \basisB^{\rowgrp 
      \backslash X}.
  \]
  Now \eqref{eq:profin-subsets} is a consequence of the fact that a 
  representable functor like $\Fun(-,\F_2)$ converts colimits into limits, and 
  that taking $\colgrp$-invariants (for any group $\colgrp$) preserves limits:
  \[
    \begin{split}
      \Fun(\basisB^X,\F_2)^{\colgrp} & = \Fun \left(\varinjlim_{\rowgrp} 
        \basisB^{\rowgrp \backslash X},\F_2 \right)^{\colgrp} \\
      & = \left( \varprojlim_{\rowgrp} \Fun(\basisB^{\rowgrp \backslash 
          X},\F_2)\right)^{\colgrp} \\
      & = \varprojlim_{\rowgrp} \left(\Fun(\basisB^{\rowgrp \backslash 
          X},\F_2)^{\colgrp}\right).
      \qedhere
    \end{split}
  \]
\end{proof}

This subspace of $\Fun(\basisB^X,\F_2)^{\cc}$ is what we called in the 
introduction $F'_H$:
\begin{equation*}
  F'_H:=  \Fun(\basisB^X,\F_2)^H.
\end{equation*}
We have suppressed the dependence on $\basisB$ (and $X$, $\germ$, \dots) from 
the notation, but $\basisB \mapsto F'_H$ is a functor of $\basisB \in 
\Mat^{\F_2}$. The inclusion into $\Fun(\basisB^X,\F_2)^{\cc} =: 
\bigprofintensor{X} \vsV$ as well as the surjection
\begin{equation}
  \label{eq:F-prime-surj}
  F'_H \to \Fun(B^{H\backslash X},\F_2) \cong \bigtensor_{H\backslash X} \vsV
\end{equation}
are natural transformations. We denote the kernel of \eqref{eq:F-prime-surj} by 
$F''_H$.

Together, \eqref{eq:mag-dir-lim} and \eqref{eq:profin-subsets} give a 
description of $\Fun(\basisB^X,\F_2)^\cc$ as an inverse limit followed by a 
direct limit:
\begin{equation}
  \label{eq:ind-pro}
  \Fun(\basisB^X,\F_2)^\cc = \varinjlim_{\colgrp \supset \germ} 
  \varprojlim_{\rowgrp \onsubgrp \colgrp} \Fun(\basisB^{\rowgrp\backslash 
    X},\F_2)^{\colgrp}.
\end{equation}
We call \eqref{eq:ind-pro} the ``staircase picture'' of the magnetized tensor 
product, for the following reason. If $\colgrp^0 \supset \germ$ is second 
countable and of $2$-power order, then it is possible to choose a decreasing 
filtration of $\colgrp^0$ by open normal subgroups
\[
  \colgrp^0 \onsupgrp \colgrp^1 \onsupgrp \dotsb,
\]
so that each $\colgrp^i$ is normal not only in $\colgrp^{i-1}$ but in the whole 
group $\colgrp^0$, and so that $\bigcap \colgrp^i = \{1\}$. For short, write 
$\ast_{ij} = \Fun(\basisB^{\colgrp^j\backslash X},\F_2)^{\colgrp^i}$. The 
diagram of surjections \eqref{eq:vert} and inclusions \[
  \Fun(\basisB^{\colgrp^j\backslash X},\F_2)^{\colgrp^i} \to  
  \Fun(\basisB^{\colgrp^j\backslash X},\F_2)^{\colgrp^{i+1}}
\]
looks like
\[
  \xymatrix{
    \vdots  \ar[d] & \vdots  \ar[d] & \vdots \ar[d]  & \quad \iddots \\
    \ast_{02} \ar[d] \ar[r] & \ast_{12} \ar[d] \ar[r] & \ast_{22} \\
    \ast_{01} \ar[d] \ar[r] & \ast_{11} \\
    \ast_{00}
  }
\]
where every horizontal map is an inclusion and every vertical map is a 
surjection.

\subsection{Sectors of \texorpdfstring{$\bigprofintensor{X} \vsV$}{the 
    profinite tensor product}}
\label{ssec:vs-sector}
A subset $J \subset \basisB^X$ determines a direct summand of the space of all 
functions $\Fun(\basisB^X,\F_2)$, namely $\Fun(J,\F_2)$. The inclusion and 
projection maps
\[
  \Fun(J,\F_2) \to \Fun(\basisB^X,\F_2) \to \Fun(J,\F_2)
\]
are, respectively, extension-by-zero and restriction. 

If there is a $G \supset \germ$ for which $J \subset \basisB^X$ is $G$-stable, 
then $\Fun(J,\F_2)$ is a $G$-stable summand of $\Fun(\basisB^X,\F_2)$, and 
because of this
\[
  \Fun(J,\F_2)^{\cc} = \Fun(J,\F_2) \cap \Fun(\basisB^X,\F_2)^{\cc} = 
  \bigcup_{G \supset \germ} \Fun(J,\F_2)^{G}
\]
is a summand of $\Fun(\basisB^X,\F_2)^{\cc}$.

Now suppose $B = \amalg_{a \in A} B_a$, where $A$ is a finite index set. One 
may think of $B_a$ as the fiber of a map $B \twoheadrightarrow A$. Let $A^X$ 
(following the notation $B^X$) denote the set of continuous functions $X \to 
A$, let $\sectorA \subset A^X$ be a subset of $A^X$, and let $J_{\sectorA} = 
\{f \colon X \to B \, \vert \, \text{the composite } X \xrightarrow{f} B 
\twoheadrightarrow A \text{ is in } \sectorA\}$. If there is a $G \supset 
\germ$ for which $\sectorA \subset A^X$ is $G$-stable, then $J_{\sectorA} 
\subset B^X$ is stable for the same $G$.  In that case
\[
  \Fun(J_{\sectorA},\F_2)^{\cc}
\]
is a summand of $\Fun(B^X,\F_2)^{\cc}$ that we will call the 
\emph{$\sectorA$-sector} of $\Fun(B^X,\F_2)^{\cc}$, or equivalently of 
$\bigprofintensor{X} \vsV$.

The $\sectorA$-sector of $\bigprofintensor{X} \vsV$ is the magnetized and 
conditionally convergent analog of the following standard situation. Suppose 
that we have a direct sum decomposition $\vsV = \bigoplus_{a \in A} \vsV_a$, 
where $A$ is a finite set. If $X$ is also a finite set, then this 
$A$-decomposition of $\vsV$ induces a decomposition of the tensor power of $X$ 
copies of $\vsV$ into pieces that are indexed by functions $X \to A$. Since the 
set $A^X$ of such functions is finite, we can understand this decomposition 
both as a direct sum and as a direct product:
\[
  \bigtensor_X \vsV = \bigoplus_{\alpha \in A^X} \left(\bigtensor_{x \in X} 
    \vsV_{\alpha(x)}\right) = \prod_{\alpha \in A^X} \left(\bigtensor_{x \in X} 
    \vsV_{\alpha(x)}\right).
\]
The summand $\bigtensor_{x \in X} \vsV_{\alpha(x)}$ is spanned by those pure 
tensors $\bigtensor_{x \in X} v_x$ with $v_x$ in $\vsV_{\alpha(x)}$ for every 
$x$.
We will call this summand the \emph{$\alpha$-sector} of $\bigtensor_X \vsV$.  
More generally, if $\sectorA \subset A^X$, the direct sum (and equivalently the 
direct product) of those $\alpha$-sectors with $\alpha \in \sectorA$, \[
  \left(\bigtensor_{X} \vsV
  \right)_{\sectorA} = \bigoplus_{\alpha \in \sectorA} \left(\bigtensor_{x \in 
      X} \vsV_{\alpha(x)}\right),
\]
is the $\sectorA$-sector of $\bigtensor_{X} \vsV$.
If $X$ is a profinite set with a germ action $\germ$ of pro-$2$-power order, 
the subset $\sectorA \subset A^X$ determines an analogous summand of 
$\bigprofintensor{X} \vsV$ as long as $\sectorA$ is $G$-stable for some $G 
\supset \germ$.

\section{Tensor powers of bimodules}
\label{sec:bimod}

\subsection{Solenoidal structures}
\label{ssec:bimod-solenoid}
A \emph{multicyclic structure} $X_S$ is a set $X$ together with a bijection $S 
\colon X \to X$. The name comes from the observation that if $X$ is a finite 
set, each orbit of $S$ on $X$ has a cyclic order in an evident way: $x$ 
precedes $Sx$ in the order. We go on calling it a multicyclic structure even 
when some orbits of $S$ are infinite, and even when every orbit of $S$ is 
infinite.

We say that a multicyclic structure is compatible with a topology on $X$ if $S$ 
is a homeomorphism. If $X$ is profinite and $\germ$ is a germ action on $X$ 
(\S~\ref{ssec:vs-profin-sets}), we say that $S$ is compatible with $\germ$ if 
there is at least one $G \supset \germ$ such that $S$ commutes with the action 
of every $g \in G$:
\[
  gS(x) = S(gx) \text{ for all $x \in X$ and $g \in G$}.
\]
In that case we call $S$ a \emph{solenoidal} structure, for the reason given in 
\S~\ref{ssec:intro-bimod-fin}.

\subsection{Basis and magnetized pure tensors}
\label{ssec:bimod-mag}
Let $R$ be a commutative ring, and let $\idemring$ be a product of finitely 
many copies (say, $n$ copies) of $R$, regarded as an associative $R$-algebra.  
Write \begin{equation}
  \label{eq:prim-idem}
  \idem_1,\idem_2,\dotsc,\idem_n \in \idemring
\end{equation}
for the primitive idempotents of $\idemring$. Let $\bimodV$ be an 
$(\idemring,\idemring)$-bimodule that is free of finite rank over $R$. In fact, 
suppose that $\bimodV$ has the stronger property that each of the subsets 
$\idem_k \bimodV \idem_\ell \subset \bimodV$ is free of finite rank over $R$.

Soon we will require that $R = \F_2$, and that $\germ$ is the germ of an action 
by a pro-$2$-group, but this hypothesis plays no role until 
\S~\ref{ssec:bimod-func-mcc}.

Choose an $R$ basis for each $\iota_k \bimodV \iota_{\ell}$, and denote it by
\begin{equation*}
  \label{eq:B-edges}
  \graphB:=\amalg_{k,\ell} \basisB_{k,\ell}.
\end{equation*}
The elements of $\amalg_{k,\ell} \basisB_{k,\ell}$ can be regarded as the edges 
of a directed graph whose vertices are the primitive idempotents 
\eqref{eq:prim-idem}. Each element $b \in \graphB_{k,\ell}$ is an arrow in the 
directed graph from vertex $\iota_k$ to vertex $\iota_{\ell}$. If $e$ is an 
edge, write $s(e)$ for the vertex it points from and $t(e)$ for the vertex it 
points to, i.e.
\[
  e \in \graphB_{k,\ell} \implies s(e) =\iota_k \text{ and }t(e) = 
  \iota_{\ell}.
\]
We sometimes denote the graph itself by $(\graphB,s,t)$.

Let $X$ and $\germ$ be as in \S~\ref{ssec:vs-profin-sets} and let $S \colon X 
\to X$ be a compatible multicyclic structure in the sense of 
\S~\ref{ssec:bimod-solenoid}. Let $\graphB^{X_S}$ denote the set of continuous 
functions $f \colon X \to \graphB$ with the following property:
\begin{equation}
  \label{eq:S-next-elem}
  \text{For all $x \in X$, $s(f(Sx)) = t(f(x))$}.
\end{equation}
The topology on the finite set $\graphB$ is the discrete topology. A continuous 
function must factor through one of the finite quotients $G \backslash X$, 
where $G \supset \germ$. The condition \eqref{eq:S-next-elem} says that each 
$S$-orbit on $G \backslash X$ is mapped to an oriented cycle in the directed 
graph.

\subsection{Conditionally convergent tensors in the solenoidal sector}
\label{ssec:bimod-cc}
The elements of $\graphB^{X_S}$ are the ``pure tensors'' in our tensor power 
$\bigprofintensor{X_S} \bimodV$, with tensor factors chosen from the basis 
$\graphB$. A more general tensor is, in a unique way, an $R$-linear combination 
of pure tensors. As in \S~\ref{ssec:vs-cc}, the most general kind of linear 
combination (with possibly infinitely many terms) can be modeled as a function 
in
\(
  \Fun(\graphB^{X_S},R).
\)
and we identify a subgroup of conditionally convergent tensors by the formula 
\begin{equation}
  \label{eq:fun-BXS-R-cc}
  \Fun(\graphB^{X_S},R)^\cc := \bigcup_{G \supset \germ} 
  \Fun(\graphB^{X_S},R)^{G}.
\end{equation}

In fact, $\Fun(\graphB^{X_S},R)^{\cc}$ is a sector of $\Fun(\graphB^X,R)^\cc$ 
in the sense of \S~\ref{ssec:vs-sector}. Put $A = 
\{\iota_1,\dotsc,\iota_n\}^{\times 2}$, the set of ordered pairs of primitive 
idempotents in \eqref{eq:prim-idem}. The solenoidal structure $S$ on $X$ 
determines a subset $\sectorA_S$ of $A^X$.  Specifically,
\(
  (a_L,a_R) \colon X \to \{\iota_1,\dotsc,\iota_n\}^{\times 2}
\)
belongs to $\sectorA_S$ if
\[
  \text{for all } x \in X, a_L(Sx) = a_R(x).
\]
The set $\sectorA_S$ is $G$-stable as a subset of $A^X$ for any $G \supset 
\germ$, and $\Fun(\graphB^{X_S},R)^{\cc}$ is the $\sectorA_S$-sector of 
$\Fun(\graphB^X,R)^{\cc}$. The idempotent projection
\[
  e_{S,\graphB} \colon \Fun(\graphB^X,R)^{\cc} \to \Fun(\graphB^X,R)^{\cc}
\]
is given by the formula
\begin{equation}
  \label{eq:eS}
  e_{S,\graphB}(\phi)(f) =
  \begin{cases}
    \phi(f) & \text{if } f \in \graphB^{X_S}, \\
    0 & \text{otherwise}.
  \end{cases}
\end{equation}

\subsection{Change of basis and other functorialities for bimodules}
\label{ssec:bimod-change-of-basis}
As in \S~\ref{ssec:vs-change-of-basis}, it is not obvious that the magnetized 
and conditionally convergent tensor product $\Fun(\graphB^{X_S},R)^{\cc}$ that 
we've defined is canonically attached to the bimodule $\bimodV$. In other words 
it is not obvious that, if for each  $k$ and $\ell$ we give another basis 
$\graphC_{k,\ell}$  for $\iota_k \bimodV \iota_{\ell}$, that the 
change-of-basis matrices (from $\graphB_{k,\ell}$ to $\graphC_{k,\ell}$) induce 
an isomorphism
\begin{equation*}
  \Fun(\graphB^{X_S},R)^{\cc} \cong \Fun(\graphC^{X_S},R)^{\cc}.
\end{equation*}
To analyze this, we will in the next subsection define a bimodule analog 
$\Mat^{(\idemring,\idemring)}$ of the category $\Mat^R$ of finite sets and 
matrices, reviewed in \S~\ref{ssec:vs-cat-fin-matrices}.

\subsection{Matrices for 
  \texorpdfstring{$(\idemring,\idemring)$-modules}{(I,I)-modules}}
\label{ssec:bimod-cat-fin-matrices}
We define a category $\Mat^{(\idemring,\idemring)}$:
\begin{itemize}
  \item The objects are tuples $(\graphB,s,t)$, where $\graphB$ is a finite set 
    and $s$ and $t$ are two functions $\graphB \rightrightarrows 
    \{\iota_1,\dotsc,\iota_n\}$ valued in the set of primitive idempotents of 
    $\idemring$. We view each $(\graphB,s,t)$ as a directed graph, possibly 
    with loops and multiple edges, whose vertex set is 
    $\{\iota_1,\dotsc,\iota_n\}$.
  \item A morphism
    \begin{equation}
      \label{eq:graphB-M-graphC}
      \left[(\graphB,s,t) \xrightarrow{M} (\graphC,s,t)\right]
    \end{equation}
    is a function $M \colon \graphC \times \graphB \to R$ with the property 
    that
    \begin{equation}
      \label{eq:contrapositive-equals}
      \text{$M(c,b) = 0$ unless $s(c) = s(b)$ and $t(c) = t(b)$.}
    \end{equation}
  \item The composition law is the same formula as \eqref{eq:NM-d-b},
    \begin{equation*}
      (NM)(d,b) = \sum_{c \in \graphC} N(d,c)M(c,b).
    \end{equation*}
    The contrapositive of the transitivity of $=$ shows that 
    \eqref{eq:contrapositive-equals} holds for $NM$ whenever it holds for both 
    $N$ and $M$.
\end{itemize}

Given $f \colon \graphB \to R$ and a pair of elements $\lambda \iota_k$ and 
$\rho \iota_\ell$ in $\idemring$, define a new function $(\lambda \iota_k)\cdot 
f \cdot (\rho \iota_\ell) \colon \graphB \to R$ by the formula
\begin{equation}
  \label{eq:bimod-Fun-B-R}
  (\lambda \iota_k)\cdot f\cdot (\rho \iota_{\ell}) (b) =
  \begin{cases}
    \lambda \rho f(b) & \text{if } s(b) = \iota_k \text{ and } t(b)= 
    \iota_\ell,\\
    0 & \text{otherwise}.
  \end{cases}
\end{equation}
This formula defines a $(\idemring,\idemring)$-bimodule structure on 
$\Fun(\graphB,R)$ that is free as an $R$-module. In fact, each idempotent 
subspace $\iota_k\Fun(\graphB,R)\iota_{\ell}$ is free as an $R$-module.

The assignment $\graphB \mapsto \Fun(\graphB,R)$ is covariantly functorial for 
matrices \eqref{eq:graphB-M-graphC} by the same rule
as in \S~\ref{ssec:vs-cat-fin-matrices}:
\begin{gather*}
  f \in \Fun(\graphB,R), \quad f \mapsto Mf \in \Fun(\graphC,R);\\
  (Mf)(c) = \sum_{b \in \graphB} M(c,b)f(b).
\end{gather*}
This functor is an equivalence between $\Mat^{(\idemring,\idemring)}$ and the 
full subcategory of $(\idemring,\idemring)$-bimodules $\bimodV$ whose 
idempotent summands $\iota_k \bimodV \iota_{\ell}$ are all free $R$-modules of 
finite rank.

\subsection{Finite multicyclic tensor powers}
\label{ssec:bimod-func-fin-tensor}
Let $(\graphB,s,t)$ be an object of $\Mat^{(\idemring,\idemring)}$. Let $X$ be 
a \emph{finite} set and let $S \colon X \to X$ be a bijection, i.e.\ a 
multicyclic structure on $X$. As in \S~\ref{ssec:bimod-mag}, let 
$\graphB^{X_S}$ denote the set of functions from $X$ to $\graphB$ with
\[
  s(f(Sx))= t(f(x)) \qquad \left(\begin{array}{c}
      \text{equivalently, with $s(f(x))= t(f(S^{-1}x))$,}\\
      \text{or with $s(f(x)) = t((Sf)(x))$}
    \end{array}
  \right)
\]
for all $x \in X$. The assignment
\begin{equation}
  \label{eq:Fun-BXS-R}
  (\graphB,s,t) \mapsto \Fun(\graphB^{X_S},R)
\end{equation}
is functorial for the kind of matrices of \S~\ref{ssec:bimod-change-of-basis}, 
in a sense we now explain. Since $X$ is finite, $\graphB^{X_S}$ is also finite 
and \eqref{eq:Fun-BXS-R} takes values in the category of free $R$-modules of 
finite rank. But it does not take values in the category of 
$(\idemring,\idemring)$-bimodules.

Given $(\graphB,s,t)\xrightarrow{M}(\graphC,s,t)$, define a map $M^{\tensor 
  X_S} \colon \Fun(\graphB^{X_S},R) \to \Fun(\graphC^{X_S},R)$ by the rule
\[
  (M^{\tensor X_S} \phi)(g) = \sum_{f \in \graphB^{X_S}} \phi(f) \prod_{x \in 
    X} M(g(x),f(x)).
\]

As we discussed in \S~\ref{ssec:intro-bimod-fin}, there are two reasonably 
natural ways to take the tensor power of $n$ copies of the 
$(\idemring,\idemring)$-bimodule $\Fun(\graphB,R)$:
\begin{itemize}
  \item ``In a row'', i.e.\
    \[
      \Fun(\graphB,R) \tensor_{\idemring} \Fun(\graphB,R) \tensor_{\idemring} 
      \dotsb \tensor_{\idemring} \Fun(\graphB,R),
    \]
    which returns a new $(\idemring,\idemring)$-bimodule.  \item ``In a 
    cycle'', a natural formula for which is
    \[
      \HH_0\Big(\idemring,\Fun(\graphB,R) \tensor_{\idemring} \Fun(\graphB,R) 
      \tensor_{\idemring} \dotsb \tensor_{\idemring} \Fun(\graphB,R)\Big).
    \]
\end{itemize}
If $S$ is transitive and $X$ has $n$ elements (that we can denote by $x_0$, 
$x_1:=S(x_0)$, \dots, $x_{n-1}:=S^{n-1}(x_0)$), then $\Fun(\graphB^{X_S},R)$ is 
naturally isomorphic to the in-a-cycle version: we define a map
\begin{equation}
  \label{eq:Fun-BXS-R-HH-trans}
  \Fun(\graphB^{X_S},R) \to \HH_0\left(\idemring,\Fun(\graphB,R) 
    \tensor_{\idemring} \Fun(\graphB,R) \tensor_{\idemring} \dotsb 
    \tensor_{\idemring} \Fun(\graphB,R)\right)
\end{equation}
by the formula
\begin{equation*}
  \phi \mapsto \sum_f \phi(f) \cdot [\delta_{f(x_0)} \tensor \delta_{f(x_1)} 
  \tensor \dotsb \tensor \delta_{f(x_{n-1})}],
\end{equation*}
where $\delta_b \colon \graphB \to R$ takes the value $1$ at $b$ and $0$ at 
every other element of $\graphB$, and, for a general 
$(\idemring,\idemring)$-bimodule $M$, $[m]$ denotes the image of $m \in M$ 
under the natural projection $M \to \HH_0(\idemring,M)$.

The map \eqref{eq:Fun-BXS-R-HH-trans} is a natural isomorphism making the 
following triangle of functors commute:
\[
  \xymatrix{
    \Mat^{(\idemring,\idemring)} \ar[rrrr]^{\graphB \mapsto \Fun(\graphB,R) 
      \quad \eqref{eq:bimod-Fun-B-R}} \ar[drr]_{\graphB \mapsto 
      \Fun(\graphB^{X_S},R); M \mapsto M^{\tensor X,S} \qquad \qquad} & & & & 
    \text{$(\idemring,\idemring)$-bimodules} \ar[dll]^{\qquad \quad \bimodV 
      \mapsto \HH_0(\idemring,\bimodV \tensor \dotsb \tensor \bimodV)} \\
    & & \Mod(R)
  }
\]

What if $S$ is not transitive, and acts on $X$ with several orbits 
$X_1,X_2,\dotsc,X_N$? Then each orbit $X_i$ determines both an in-a-row
tensor power and an in-a-cycle tensor power.
The in-a-row tensor power is $\Fun(\graphB,R) \tensor_{\idemring} \dotsb 
\tensor_{\idemring} \Fun(\graphB,R)$, with tensor factors indexed by elements 
of $X_i$, which we write as \[
  x_0^i, x_1^i := Sx_0^i,\dotsc,x_{n_i-1}^i := S^{n_i-1} x_0^i.
\] For short, let us call this in-a-row tensor power $M_i$. The  in-a-cycle 
tensor power is $\HH_0(\idemring,M_i)$.
Then there is a natural isomorphism
\begin{equation}
  \label{eq:Fun-BXS-R-HH-not-trans}
  \Fun(\graphB^{X_S},R) \cong \bigtensor_{i=1}^N \HH_0(\idemring,M_i).
\end{equation}
To be explicit, consider a $\phi \in \Fun(\graphB^{X_S},R)$ that is equal to a 
product of functions:
\begin{equation}
  \label{eq:sep-of-vars}
  \phi(f)=\phi_1(f\vert_{X_1})\phi_2(f\vert_{X_2})\dotsm\phi_N(f\vert_{X_N}).
\end{equation}
The map $(\phi_1,\phi_2,\dotsc,\phi_n) \mapsto \phi$ defined by the rule 
\eqref{eq:sep-of-vars} is multilinear and extends to an isomorphism 
$\bigtensor_i \Fun(\graphB^{X_{i,S}},R) \stackrel{\sim}{\to} 
\Fun(\graphB^{X_S},R)$.) Then \eqref{eq:Fun-BXS-R-HH-not-trans} sends $\phi$ to
\[
  \bigtensor_{i = 1}^N  \left(\sum_f \phi_i(f) \cdot \left[\delta_{f(x_{0}^i)} 
      \tensor \delta_{f(x_{1}^i)} \tensor \dotsb \tensor 
      \delta_{f(x_{n_i-1}^i)}\right]\right).
\] 

\subsection{Functoriality}
\label{ssec:bimod-func-mcc}
For the same reason as in \S~\ref{ssec:vs-F2-inf-prods-sums}, we now restrict 
to the case where $R = \F_2$. Let $(\graphB,s,t)$ and $(\graphC,s,t)$ be two 
objects of the category $\Mat^{(\idemring,\idemring)}$. A morphism between them 
is a matrix
\[
  M\colon C \times B \to \F_2
\]
whose $(c,b)$ entry is zero unless $s(c) = s(b)$ and $t(c) = t(b)$. Let us 
explain how such an $M$ induces a map
\[
  M^{\tensor X_S} \colon \Fun(\graphB^{X_S},\F_2)^{\cc} \to 
  \Fun(\graphC^{X_S},\F_2)^{\cc}.
\]
In fact, given \emph{any} matrix $M \colon C \times B \to \F_2$, we have a 
reasonably natural formula for a map 
\(
  \Fun(\graphB^{X_S},\F_2)^{\cc} \to \Fun(\graphC^{X_S},\F_2),
\)
using the projectors in \eqref{eq:eS}.
If we make the identifications
\[
  \Fun(\graphB^{X_S},\F_2)^{\cc} = e_{\graphB,S}\Fun(\graphB^X,\F_2)^{\cc}
  \qquad \text{and} \qquad
  \Fun(\graphC^{X_S},\F_2)^{\cc} = e_{\graphC,S}\Fun(\graphC^X,\F_2)^{\cc},
\]
the formula is
\[
  M^{\tensor X_S}:=e_{\graphC,S} \circ M^{\tensor X}\vert_{e_{\graphB,S} 
    \Fun(\graphB^X,\F_2)^{\cc}}.
\]
where $M^{\tensor X}$ is as in \S~\ref{ssec:vs-func-mcc-1}. More explicitly,
\[
  (M^{\tensor X_S} \phi)(g) =
  \begin{cases}
    \sum_{f \in B^{X_S}} \phi(f) \prod_{x \in X} M(g(x),f(x)) & \text{if $g \in 
      C^{X_S}$},\\
    0 & \text{otherwise}.
  \end{cases}
\]
For arbitrary matrices $M$ and $N$ it is not the case that
\[
  (N \circ M)^{\tensor X_S} = N^{\tensor X_S} \circ M^{\tensor X_S}.
\]

\subsection*{Proposition}
Let $M \colon \graphC \times \graphB \to \F_2$ and $N \colon \graphD \times 
\graphC \to \F_2$ be morphisms in $\Mat^{(\idemring,\idemring)}$. Then
\[
  (N \circ M)^{\tensor X_S} = N^{\tensor X_S} \circ M^{\tensor X_S}.
\]
In other words, $\graphB \mapsto \Fun(\graphB^{X_S},\F_2)^{\cc}$ is a functor 
from $\Mat^{(\idemring,\idemring)}$ to the category of $\F_2$--vector spaces.

\begin{proof}
  A sufficient condition for $(N M)^{\tensor X_S} = N^{\tensor X_S} \circ 
  M^{\tensor X_S}$ is that, for all $(h, f) \in \graphD^{X_S} \times 
  \graphB^{X_S}$,
  \[
    \sum_{g \in \graphC^X - \graphC^{X_S}} \phi(f) \prod_{x \in X}  
    N(h(x),g(x))M(g(x),f(x)) = 0.
  \]
  The proof is almost identical to the proof of the Proposition of 
  \S~\ref{ssec:vs-func-mcc-2}:
  if $h \in \graphD^{X_S}$ or even if $h \in \graphD^X$, there is a $G_{h,\phi} 
  \supset \germ$ for which the assignment
  \[
    g \mapsto (M^{\tensor X_S}\phi(g)) \prod_{x \in X} N(h(x),g(x))
  \]
  is $G_{h,\phi}$-invariant, and for each $g \in (\graphC^{X_S})^{G_{h,\phi}}$, 
  there is a $G'_{g,\phi} \supset \germ$ for which the assignment
  \[
    f \mapsto \phi(f) \prod_{x \in X} M(g(x),f(x))
  \]
  is $G'_{g,\phi}$-invariant. Set
  \(
    G'_{h,\phi} := \bigcap_{g \in (\graphC^{X_S})^{G_{h,\phi}}} G'_{g,\phi}.
  \)
  There is also a $G''_h \supset \germ$ such that $h \colon X \to D$ is 
  constant on the orbits of $G''_h$. Setting
  \(
    H_{h,\phi} := G_{h,\phi} \cap G'_{h,\phi} \cap G''_h,
  \)
  the value of $N^{\tensor X_S} (M^{\tensor X_S} (\phi))$ on $h$ is
  \[
    \begin{split}
      & \phantom{{}=} \sum_{g \in (\graphC^{X_S})^{H_{h,\phi}}}
      \left(\sum_{f \in (\graphB^{X_S})^{H_{h,\phi}}}
        \phi(f) \prod_{x \in \lmod{X}{H_{h,\phi}}} 
        N(h(x),g(x))M(g(x),f(x))\right)\\
      &=
      \sum_{f \in (\graphB^{X_S})^{H_{h,\phi}}}
      \left(\sum_{g \in (\graphC^{X_S})^{H_{h,\phi}}}
        \phi(f) \prod_{x \in \lmod{X}{H_{h,\phi}}} 
        N(h(x),g(x))M(g(x),f(x))\right).
    \end{split}
  \]
  The inner sum is
  \begin{equation}
    \label{eq:N-of-M-exp}
    \sum_{g \in (\graphC^{X_S})^{H_{h,\phi}}}
    \phi(f) \prod_{x \in \lmod{X}{H_{h,\phi}}} N(h(x),g(x))M(g(x),f(x)).
  \end{equation}
  On the other hand, the value of $(NM)^{\tensor X_S} (\phi))$ on $h$ is
  \[
    \begin{split}
      & \phantom{{}=}
      \sum_{f \in (\graphB^{X_S})^{H_{h,\phi}}}
      \phi(f)
      \prod_{x \in \lmod{X}{H_{h,\phi}}}
      \left(\sum_{c \in C}
        N(h(x),g(x))M(g(x),f(x)) \right)\\
      & =
      \sum_{f \in (\graphB^{X_S})^{H_{h,\phi}}}
      \left(\sum_{g \in (\graphC^{X})^{H_{h,\phi}}}
        \phi(f)
        \prod_{x \in \lmod{X}{H_{h,\phi}}}
        N(h(x),g(x))M(g(x),f(x)) \right),
    \end{split}
  \]
  whose inner sum is
  \begin{equation}
    \label{eq:NM-exp}
    \begin{split}
      & \phantom{{}=} \sum_{g \in (\graphC^{X})^{H_{h,\phi}}} \phi(f) \prod_{x 
        \in \lmod{X}{H_{h,\phi}}} N(h(x),g(x))M(g(x),f(x))\\
      & = \sum_{g \in (\graphC^{X_S})^{H_{h,\phi}}} \phi(f) \prod_{x \in 
        \lmod{X}{H_{h,\phi}}} N(h(x),g(x))M(g(x),f(x))\\
      & \phantom{{}=} + \sum_{g \in (\graphC^{X} \setminus 
        \graphC^{X_S})^{H_{h,\phi}}} \phi(f) \prod_{x \in \lmod{X}{H_{h,\phi}}} 
      N(h(x),g(x))M(g(x),f(x)).
    \end{split}
  \end{equation}
  The inner sums \eqref{eq:N-of-M-exp} and \eqref{eq:NM-exp} are equal if
  \[
    \sum_{g \in (\graphC^X - \graphC^{X_S})^{H_{h,\phi}}} \phi(f) \prod_{x \in 
      \lmod{X}{H_{h,\phi}}}  N(h(x),g(x))M(g(x),f(x)) = 0.
  \]

  We now claim that, for each $g \in \graphC^X \setminus \graphC^{X_S}$, we in 
  fact have
  \[
    \prod_{x \in X}  N(h(x),g(x))M(g(x),f(x)) = 0
  \]
  for all $f \in \graphB^{X_S}$ and $h\in \graphD^X$ (sic).  Indeed, recall 
  that $g \in C^{X_S}$ when $s(g(Sx)) = t(g(x))$ for all $x$.  If $g \in C^X - 
  C^{X_S}$, then there is some $x$ for which $s(g(Sx)) \neq t(g(x))$. It 
  follows that for all $f \in B^{X_S}$ (i.e.\  for all $f$ with $s(f(Sx)) = 
  t(f(x))$ for all $x$), there is an $x$ such that
  \[
    s(g(x)) \neq s(f(x))
  \]
  and for such an $x$, we have $M(g(x),f(x)) = 0$.
\end{proof}

\subsection{Staircase picture}
\label{ssec:bimod-staircase}
The same constructions in \S~\ref{ssec:vs-staircase} make sense when $\graphB$ 
is a basis for an $(\idemring,\idemring)$-bimodule.
By definition \eqref{eq:fun-BXS-R-cc}, $\Fun(\graphB^{X_S},\F_2)^{\cc}$ is the 
union over open subgroups $\colgrp \supset \germ$ of the $\colgrp$-invariant 
subspaces $\Fun(\graphB^{X_S},\F_2)^{\colgrp}$. Each of these subspaces is the 
inverse limit over open normal subgroups $\rowgrp \onsubgrp \colgrp$ of the 
finite-dimensional spaces $\Fun(\graphB^{\rowgrp \backslash 
  X_S},\F_2)^{\colgrp}$, and so
\begin{equation*}
  \Fun(\graphB^{X_S},\F_2)^{\cc} \cong \varinjlim_{\colgrp \supset \germ} 
  \varprojlim_{\rowgrp \onsubgrp \colgrp} \Fun(\graphB^{\rowgrp \backslash 
    X_S},\F_2)^{\colgrp}.
\end{equation*}
For each $\colgrp \supset \germ$, define subgroups $F''_\colgrp \subset 
F'_\colgrp \subset \Fun(\graphB^{X_S},\F_2)^\cc$ by the formulas
\begin{equation*}
  F'_\colgrp := \Fun(\graphB^{X_S},\F_2)^\colgrp, \qquad F''_\colgrp 
  :=\operatorname{ker}\left(
    F'_\colgrp \to \Fun(\graphB^{\colgrp \backslash X_S},\F_2)^\colgrp
  \right).
\end{equation*}
These are functorial for maps in $\Mat^{(\idemring,\idemring)}$. When $\colgrp 
\backslash X$ has $n$ elements and $S$ acts transitively, the quotient $
F'_\colgrp/F''_\colgrp \cong \Fun(\graphB^{\colgrp \backslash 
  X_S},\F_2)^\colgrp
$ is naturally isomorphic to the Hochschild homology of the in-a-line tensor 
product of $n$ copies of $\bimodV$
\[
  F'_\colgrp/F''_\colgrp \cong \HH_0(\idemring,\bimodV \tensor_{\idemring} 
  \dotsb \tensor_{\idemring} \bimodV).
\]
When $S$ has several orbits on $\colgrp \backslash X$, it is isomorphic to a 
tensor product of several $\HH_0$'s, as in \eqref{eq:Fun-BXS-R-HH-not-trans}.

\section{An example from Heegaard Floer homology}
\label{sec:example}

In this section we will illustrate an example of 
$\bigprofintensor{X_S}{\bimodV}$ when $X$ is an affine copy of $\Z_2$, and 
$\germ$ is the germ of the action of $\Z_2$ on itself. The solenoidal structure 
$S \colon X \to X$ is $S(x) = x+1$.
The example is closely related to the knot Heegaard Floer homologies of the 
pair \[
  (S^3,4_1):=(S^3,\text{figure-eight knot})
\]
and of its $2^n$-fold branched cyclic covers, or equivalently to the sutured 
Heegaard Floer homologies of the exterior of the figure-eight knot and its 
$2^n$-fold cyclic covers. It suggests to us an interpretation of 
$\bigprofintensor{X_S} \bimodV$ as the sutured Heegaard Floer homology of a 
``pro-manifold'' of dimension $3$.

\subsection{A semisimple ring and bimodule}
\label{ssec:example-ring-bimod}
Throughout this section, $\idemring$ will be the semisimple ring that is the 
product of $4$ copies of the field $\F_2$. We label its primitive idempotents 
by subsets of $\{\zero,\one\}$, as in
\begin{equation}
  \label{four-vertices}
  \iota_{\varnothing}, \iota_{\{\zero\}}, \iota_{\{\one\}}, 
  \iota_{\{\zero,\one\}}.
\end{equation}

Let $\graphB$ be the directed graph whose vertices are \eqref{four-vertices} 
and whose seven edges
\begin{equation}
  \label{eq:seven-edges}
  t,u,v,w,x,y,z
\end{equation}
are as in the figure:
\[
  \begin{tikzpicture}[scale=0.6]
    \node (i0) at (-2,0) {$\iota_\varnothing$};
    \draw[->, out=192, in=168, looseness=15] (i0) to node[above left] {$t$} 
    (i0);
    \node (i1) at (2.5,0) {$\iota_{\{0\}}$};
    \node (i2) at (5.5,0) {$\iota_{\{1\}}$};
    \draw[->, out=15, in=165] (i1) to node[above] {$v$} (i2);
    \draw[->, out=195, in=345] (i2) to node[below] {$w$} (i1);
    \draw[->, out=190, in=170, looseness=15] (i1) to node[above left] {$u$} 
    (i1);
    \draw[->, out=10, in=40, looseness=10] (i2) to node[above right] {$x$} 
    (i2);
    \draw[->, out=350, in=320, looseness=10] (i2) to node[below right] {$y$} 
    (i2);
    \node (i3) at (10,0) {$\iota_{\{0,1\}}$};
    \draw[->, out=10, in=350, looseness=10] (i3) to node[above right] {$z$} 
    (i3);
  \end{tikzpicture}
\]
Let $\bimodV = \F_2\{t,u,v,w,x,y,z\}$ be the $\F_2$--vector space spanned by 
the set of edges in this graph, with its $(\idemring,\idemring)$-bimodule 
structure.

There is a Seifert surface of the figure-eight knot $\figeight$ with genus one 
and one boundary component (the knot itself). Like any torus with one boundary 
component, this Seifert surface can be presented as a disk with two $1$-handles 
attached to it. Later on, the set $\{\zero,\one\}$ indexes the $1$-handles in 
such a presentation. The graph comes in a more complicated way from a Heegaard 
decomposition of the bordered $3$-manifold obtained from the exterior of 
$\figeight$ by cutting along this Seifert surface.

\subsection{Monomial, polynomial, and series notation}
\label{ssec:example-notation}
The set of pure tensors in $\bigprofintensor{X_S} \vsV$ is $\graphB^{X_S}$.  
Let us describe $\graphB^{X_S}$ in more detail. In \S~\ref{ssec:bimod-mag}, it 
is defined as the set of magnetized functions from $X$ to the edges of 
$\graphB$ obeying \eqref{eq:S-next-elem}.
When $X = \Z_2$, a magnetized function is constant on cosets of $2^m \Z_2$ for 
some $m$. The solenoidal structure $S$ with $S(x) = x+1$ permutes the cosets 
cyclically, and so an element of $\graphB^{X_S}$ is determined by a directed 
cycle whose length is a power of $2$.
We can name such a function by giving a sequence of letters from $tuvwxyz$ 
\eqref{eq:seven-edges} whose length is a power of $2$. Let us give some 
examples and non-examples:
\begin{itemize}
  \item The single letter $u$, regarded as a sequence of length $2^0$.  
    Formally, this is the constant function $X \to \graphB$ that takes the 
    value $u$ on every element of $X$.
  \item The single letter $v$ does \emph{not} define a magnetized pure tensor, 
    because $v$ is not a loop.
  \item The directed cycle $vw$, which takes the value $v$ on even elements of 
    $X$ and $w$ on odd elements of $X$. Note that $vwvw$ indicates the same 
    function.
  \item The directed cycle $wv$, which takes the value $w$ on even elements of 
    $X$ and $v$ on odd elements of $X$. Note that $wv$ and $vw$ indicate 
    different functions.
  \item The directed cycle $vwuu$, which takes the value $v$ on elements of $X$ 
    that are $0$ mod $4$, $w$ on elements that are $1$ mod $4$, and $u$ on 
    elements that are either $2$ mod $4$ or $3$ mod $4$.
  \item The directed cycles $uuuvwuuu$, $uvwuuuuu$, and so on. To make these 
    easier to read, we can write them as though they were monomials in 
    noncommutative variables:
    \[
      uuuvwuuu = u^3 vw u^3, \qquad uvwuuuuu = u vw u^5,
    \]
    etc. However, beware that the analogy with monomials obscures the fact that 
    (for instance) $vw = vwvw$.
\end{itemize}
More formally, we have
\[
  \graphB^{X_S} = \bigcup_{G \supset \germ} \graphB^{(G\backslash X)_S} = 
  \bigcup_{m \in \Z_{\geq 0}} \graphB^{(2^m \Z_2 \backslash X)_S},
\]
and when we write a sequence of letters of length $2^m$, we are giving an 
element of $\graphB^{(2^m \Z_2 \backslash X)_S}$.
A general element of $\bigprofintensor{X_S} \vsV$ is a conditionally convergent 
sum of magnetized pure tensors. Let us give some examples and non-examples of 
the conditionally convergent property:
\begin{itemize}
  \item A magnetized pure tensor, viewed as a sum of one term, is conditionally 
    convergent. For example, $vw$ and $wv$ are invariant by $2\Z_2 \supset 
    \germ$, and $vwuu$ is invariant by $4\Z_2 \supset \germ$.
  \item A finite linear combination of magnetized pure tensors is conditionally 
    convergent. For example, $vw+wvuu$ is invariant by $4\Z_2$.
  \item A negative example: the sum $x + xy + xxxy + xxxxxxxy + \dotsb$ is 
    \emph{not} conditionally convergent. Formally, this is the function 
    $\graphB \to \F_2$ that takes the value $1$ on any pure tensor of the form 
    $x^{2^n - 1} y$ (which is magnetized) and $0$ on any other magnetized pure 
    tensor. In the shorthand of this section we can write it instead as
    \[
      \sum_{n = 0}^{\infty} x^{2^n-1} y.
    \]
    The $n$th term in this sum is invariant for the group $2^n \Z_2 \supset 
    \germ$, but the intersection of these groups is the trivial group. Since 
    the germ of the trivial group action is not $\germ$, the sum is not 
    conditionally convergent.
  \item The series
    \[
      \begin{split}
        & \phantom{{}+} (xy + yx) + (xxxy + xxyx + xyxx+ yxxx) \\
        & + (x^7 y + x^6yx + \dotsb + xyx^6 + yx^7) + \dotsb + \left(\sum_{i = 
            1}^{2^n} x^{2^n - i} y x^{i-1}\right) + \dotsb
      \end{split}
    \]
    is conditionally convergent. The $n$th term $(\sum_{i = 1}^{2^n} x^{2^n - 
      i} y x^{i-1})$ is invariant by the group $\Z_2$, and so the whole series 
    is also invariant by $\Z_2$.
\end{itemize}

\subsection{Staircase picture}
\label{ssec:example-staircase}

In \S~\ref{ssec:bimod-staircase}, we defined subgroups $F'_H$ and $F''_H$ of 
$\bigprofintensor{X_S} \vsV = \Fun (\graphB^{X_S}, \F_2)^{\cc}$, for each $H 
\supset \germ$. In this section, we describe the subgroups $F'_H$ and $F''_H$ 
in the notation of \S~\ref{ssec:example-notation}.

The groups $H \supset \germ$ all have the form $H = 2^m \Z_2$. For such an $H$, 
\begin{enumerate}
  \item $F'_H$ is the set of $H$-invariant magnetized tensors. For example,  
    \begin{equation}
      \label{eq:F-prime-elem}
      \begin{split}
        & \phantom{{}+} xy + (xxxy + xyxx) + (x^7 y + x^5yx^2 + x^3yx^4 + 
        xyx^6) \\
        & + \dotsb + \left(\sum_{\substack{i = 1\\ \text{$i$ odd}}}^{2^n} 
          x^{2^n - i} y x^{i-1}\right) + \dotsb
      \end{split}
    \end{equation}
    is an element of $F'_{2\Z_2}$. In fact, the sum of any (finite or infinite) 
    subset of these terms is also an element of $F'_{2 \Z_2}$.
  \item $F''_H$ is the kernel of the map $F'_H \to \Fun(\graphB^{H\backslash 
      {X_S}},\F_2)^H$, which is just restriction of functions. When $H = 2^m 
    \Z_2$, a series $\phi \in F'_H$ belongs to $F''_H$ if and only if every 
    pure tensor in its support is of length longer than $2^m$. For example, the 
    element in \eqref{eq:F-prime-elem} is \emph{not} in $F''_{2\Z_2}$. But if 
    we subtract (or equivalently, add) $xy$ to \eqref{eq:F-prime-elem}, the 
    result is in $F''_{2\Z_2}$.
\end{enumerate}

\subsection{The figure-eight knot}
\label{ssec:example-fig-eight}

For the rest of this section, we explain how $F'_H / F''_H$ in 
\S~\ref{ssec:example-staircase} is related to the sutured Heegaard Floer 
homology of a knot exterior and its covers. In this subsection we will describe 
these $3$-manifolds and their ``sutures,'' and in \S~\ref{ssec:example-sfh} we 
will recall some notation for sutured Heegaard Floer homology.

Let $\knotL = \figeight$ be the figure-eight knot.
Let $\mfd$ be the exterior, i.e.\ the complement of a tubular neighborhood $N$, 
of $\knotL$. It is a $3$-manifold whose boundary is a torus.
The boundary of a slice of $N$ is called a meridian. A section of $\partial N 
\to \knotL$ that is null-homologous in $\mfd$ is called a longitude.
As for any knot exterior, there is a unique homotopy class of parametrizations 
$\partial \mfd = \mer \times \longt$, with the property that every $\mer \times 
\set{q}$ is a meridian of $\knotL$, and every $\set{p} \times \longt$ is a 
longitude of $\knotL$. Fix such a parametrization and choose two distinct 
points $q_1, q_2 \in \longt$, and let $\Gamma \subset \partial \mfd$ be the 
union of $\mer \times \set{q_1}$ and $\mer \times \set{q_2}$. The pair $(\mfd, 
\Gamma)$ is an example of a \emph{sutured manifold}, and the two components of 
$\Gamma$ are called meridional sutures \cite[Example~2.4]{Juh06}.

For each integer $n$, there is a connected covering space of $\mfd$ whose deck 
group is cyclic of order $n$. It is unique up to isomorphism of covering 
spaces, and we call it the $n$th Alexander covering. Its boundary is 
parametrized by the product of the longitude and an $n$-fold covering circle of 
the $\mer$. When $n = 2^m$, we denote the corresponding Alexander covering by 
$\mfd_H$, where $H = 2^m \Z_2 \supset \germ$. The boundary of $\mfd_H$ is the 
preimage of the boundary of $\mfd$ under the covering map, and we denote the 
preimage of $\Gamma$ by $\Gamma_H$. Denote the two components of $\Gamma_H$ by 
$\mer_H \times \set{q_1}$ and $\mer_H \times \set{q_2}$. The pair $(\mfd_H, 
\Gamma_H)$ is also a sutured manifold. In fact $(\mfd,\Gamma)$ and the covers 
$(\mfd_H,\Gamma_H)$ are so-called ``balanced'' sutured manifolds 
\cite[Definition 2.2]{Juh06}.

\subsection{Sutured Heegaard Floer Homology}
\label{ssec:example-sfh}
Juhasz has defined \cite[Definition~7.6]{Juh06}, for a general balanced sutured 
$3$-manifold $(M,\Gamma \subset \partial M)$, an abelian group that we will 
denote by $\SFH(M,\Gamma;\Z)$ and call the \emph{sutured Heegaard Floer 
  homology} of $(M,\Gamma)$ over $\Z$.  It is defined as the homology of a 
chain complex $(\SFC(\Sigma,\alpha,\beta;\Z),\partial)$ of free abelian groups 
(denoted $\CF(\Sigma,\alpha,\beta)$ in \cite[Definition~7.1]{Juh06}) associated 
with a Heegaard decomposition of $N$ that respects the sutured manifold 
structure (\cite[Definitions~2.7 and 3.11]{Juh06}). The homology of the tensor 
product of $(\SFC(\Sigma,\alpha,\beta;\Z),\partial)$ and another ring $R$ is 
called the sutured Heegaard Floer homology over $R$ and denoted 
$\SFH(M,\Gamma;R)$ (cf.\ \cite[Remark~10.8]{Juh06}). When $R = \F_2$ we will 
simply write it as $\SFH(M,\Gamma)$ and call it the sutured Heegaard Floer 
homology of $(M,\Gamma)$. We will ignore the grading (\cite[\S~8]{Juh06}) on 
$\SFH$ and $\SFC$, and regard $\SFH(M,\Gamma)$ as an ungraded $\F_2$-vector 
space.

For a knot (or indeed, link) $L$ in a closed $3$-manifold $Y$, a standard 
observation (cf. for example \cite[Theorem 1.5]{Juh08}) is that any Heegaard 
decomposition of the pair $(Y,L)$ determines a Heegaard decomposition of the 
exterior $Y \setminus N(L)$ with its meridional sutures $\Gamma$, and an 
isomorphism
\begin{equation}
  \label{eq:sfh-hfk}
  \SFH(Y \setminus N(L),\Gamma) \cong \HFKh(Y,L).
\end{equation}
When $L$ is a nullhomologous knot, the right-hand side of \eqref{eq:sfh-hfk} is 
the ``knot Floer homology'' defined in \cite{OzsSza04:HFK,Ras03}. We will call 
it the knot Heegaard Floer homology of $L$. 

The following is a restatement of the theorem of \S~\ref{ssec:intro-ex}.

\begin{theorem*}
  Let $X$, $\germ$, $S$, $\idemring$, and $\vsV$ be as in 
  \S~\ref{ssec:example-ring-bimod} and \S~\ref{ssec:example-notation}.  For 
  each $H = 2^m \Z_2 \supset \germ$, let $F''_H \subset F'_H \subset 
  \bigprofintensor{X_S} \vsV$ be as in \S~\ref{ssec:example-staircase}. Let 
  $(M_H,\Gamma_H)$ be as in \S~\ref{ssec:example-fig-eight}. There is an 
  isomorphism of $\F_2$--vector spaces
  \begin{equation}
    \label{eq:first-part-of-theorem}
    F'_H/F''_H \cong \SFH(M_H,\Gamma_H).
  \end{equation}
\end{theorem*}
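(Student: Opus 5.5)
The plan is to identify both sides with the same Hochschild-type object built from a bordered invariant. On the algebraic side, \S~\ref{ssec:bimod-staircase} already gives $F'_H/F''_H \cong \Fun(\graphB^{H\backslash X_S},\F_2)^H$. Since $H = 2^m\Z_2$ acts trivially on $H\backslash X = \Z/2^m$, this is all of $\Fun(\graphB^{(\Z/2^m)_S},\F_2)$, and $S$ acts transitively. So by \eqref{eq:Fun-BXS-R-HH-trans} it is
\[
  \HH_0\left(\idemring, \bimodV^{\tensor_\idemring 2^m}\right),
\]
which has a basis given by the directed closed walks of length $2^m$ in the graph $\graphB$, with a chosen starting edge. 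The algebraic side thus reduces to counting: it has dimension equal to the number of such closed walks, and this number can be read off from powers of the $4\times 4$ adjacency matrices of the four components. That is the trace of $A^{2^m}$, namely $1 + \mathrm{tr}\begin{pmatrix}1&1\\1&2\end{pmatrix}^{2^m} + 1$.

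On the topological side, I would cut $M$ along the genus-one Seifert surface $\surf$ to get a bordered sutured manifold. Its $\DA$ bimodule over the algebra of $\surf$ (sutured, with one boundary) has idempotent ring $\idemring = \F_2^4$, indexed by subsets of the two handles. The key input is a choice of Heegaard diagram in which this $\CFDAh$ bimodule has vanishing differential and vanishing higher $\DA$ operations, or at least a model where all $\delta^1_{k}$ with $k\geq 2$ vanish and $\delta^1_1$ has no algebra outputs. It is then just the $(\idemring,\idemring)$-bimodule $\bimodV$ with generators $t,\dots,z$ on the stated idempotents. The figure-eight monodromy $\sbdiff = \twistl\twistm^{-1}$ is the relevant mapping class, and this is the source of the graph in \S~\ref{ssec:example-ring-bimod}.

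Next I apply the gluing theorem for self-gluing (Lipshitz–Ozsváth–Thurston, in the sutured/Zarev form): $\SFC(M_H,\Gamma_H)$ is quasi-isomorphic to the Hochschild complex of the $2^m$-fold box tensor power $\CFDAhsbdiff^{\boxtensor 2^m}$. This holds because $M_H$ is obtained by stacking $2^m$ copies of the cut-open manifold cyclically along $\surf$, and the sutures $\Gamma_H$ are the preimages of the meridional sutures. When the $\DA$ bimodule has trivial structure maps beyond the idempotent action, the box tensor products reduce to $\tensor_\idemring$. In that case the Hochschild complex $\CHbox$ has zero differential and its homology is $\HH_0(\idemring, \bimodV^{\tensor_\idemring 2^m})$. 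Combining this with the first paragraph gives \eqref{eq:first-part-of-theorem}.

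The main obstacle is the middle step: exhibiting an explicit diagram, or a homotopy-equivalent model, for the bordered bimodule of the cut knot exterior whose structure maps are purely idempotent. I would try first a nice genus-two diagram adapted to the Dehn twists, and simplify via cancellation. If nonzero algebra-valued maps survive, I would instead argue that, after tensoring cyclically, they contribute nothing to homology. One way is via a filtration by the number of Reeb-chord outputs, together with a rank comparison against the known value of $\HFKh$ of the $2^m$-fold cyclic branched covers, which are torus-bundle computations. As a fallback, a rank check alone suffices, because both sides are ungraded $\F_2$-vector spaces: if I can show $\dim \SFH(M_H,\Gamma_H) = 2 + \mathrm{tr}\,A^{2^m}$, the isomorphism follows.
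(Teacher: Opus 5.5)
Your framework matches the paper's. On one side, $F'_H/F''_H\cong\HH_0(\idemring,\bimodV^{\tensor_\idemring 2^m})$; on the other, $\SFH(M_H,\Gamma_H)\cong\HHbox(\CFDAh(\sbdiff)^{\boxtensor 2^m})$ by the Lipshitz--Ozsv\'ath--Thurston theorems. Your closed-walk count of $\dim F'_H/F''_H$ is also right: at $m=0$ it gives $5=\dim\HFKh(S^3,\figeight)$.

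The gap is the step you call the key input: it cannot be carried out. $\CFDAh(\sbdiff)$ is quasi-invertible, since $\CFDAh(\sbdiff)\boxtensor\CFDAh(\sbdiff^{-1})\simeq\CFDAh(\id)$. A type-\DA{} bimodule whose structure maps have only idempotent algebra outputs sends every type-D structure to one whose $\delta^1$ also has only idempotent outputs. Such a type-D structure is homotopy equivalent to $\dga(\pmc)\tensor_{\idemring(\pmc)}W$ with zero differential. So a bimodule of that kind cannot induce an autoequivalence: the one-generator structure with $\delta^1x=\rho_{12}\tensor x$ has $1$-dimensional homology, so it is not of that form. No diagram choice or cancellation will give your model, and indeed the actual bimodule \eqref{eq:DAbimod-box-final} has nonzero $\delta^1_{1+j}$ for $j$ up to $4$.

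Your fallbacks do not close the gap. A filtration by Reeb chords gives at best a spectral sequence from the cyclic tensor power, i.e.\ only an upper bound on rank. The rank check presupposes that $\dim\SFH(M_H,\Gamma_H)$ equals your count, which is exactly what must be proved, and you give no independent source for it.

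The missing idea is that you never need trivial structure maps, only a vanishing Hochschild differential. Whatever the structure maps are, the underlying space of $\CHbox(\CFDAh(\sbdiff,0)^{\boxtensor n})$ is already the cyclic tensor power over $\idemring$ of the span of $\genpp,\genps,\genqq,\genrp,\genrs$.

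The paper computes $\CFDAh(\sbdiff,0)\simeq\CFDAh(\twistm^{-1},0)\boxtensor\CFDAh(\twistl,0)$ explicitly from the known Dehn-twist bimodules, obtaining \eqref{eq:DAbimod-box-final}. It then observes two facts. Every term whose output is $\rho_2$ or an idempotent has $\rho_2$ among its inputs. Idempotent inputs give idempotent outputs, by strict unitality.

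In a Hochschild operation tree, algebra outputs are fed back in as inputs, and there is exactly one augmentation vertex, which kills everything except idempotents. The topmost edge carrying a $\rho_2$-term or an idempotent would need an earlier such edge, so no such edge exists and every tree contributes zero. Hence $\CHbox$ has zero differential for every $n$. Adding the strands-grading $\pm1$ summands, which contribute the loops $t$ and $z$, identifies the homology with $\HH_0(\idemring,\bimodV^{\tensor_\idemring 2^m})$ directly, with no rank input needed.
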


When $K = 2^{m+1} \Z_2$, the inclusions
\[
  F''_K \cap F'_H \subset F''_H \subset F'_H \subset F'_K
\] together with the isomorphisms \eqref{eq:first-part-of-theorem} for $H$ and 
$K$ imply that $\SFH(M_H,\Gamma_H)$ is a subquotient of $\SFH(M_K,\Gamma_K)$:
\[
  \begin{array}{c}
    \xymatrix{
      {F'_H}/({F''_K \cap F'_H}) \ar@{^(->}[r] \ar@{->>}[d] & F'_K/F''_K \\
      F'_H/F''_H
    }
  \end{array}
  \cong \begin{array}{c}
    \xymatrix{
      F'_H / (F''_K \cap F'_H) \ar@{^(->}[r] \ar@{->>}[d] & \SFH(M_K,\Gamma_K) 
      \\
      \SFH(M_H,\Gamma_H)
    }
  \end{array}
\]
The results of \cite{LipTre16} give a fairly natural subquotient relationship 
between $\SFH(M_K,\Gamma_K)$ and $\SFH(M_H,\Gamma_H)$. There are cyclic covers 
\begin{equation}
  \label{eq:YsubH}
  Y_K \xrightarrow{\pi} Y_H \to S^3
\end{equation}
branched over $L \subset S^3$, and, according to \eqref{eq:sfh-hfk}, there are 
isomorphisms
\[
  \HFKh(Y_H,L_H) \cong \SFH(M_H,\Gamma_H)\text{ and }\HFKh(Y_K,L_K) \cong 
  \SFH(M_K,\Gamma_K).
\]
\cite[Theorem 1]{LipTre16} says that there is a spectral sequence from the 
$\pi^*(\Spinct)$-graded piece of $\HFKh(Y_K,L_K)$ converging to the 
$\Spinct$-graded piece of $\HFKh(Y_H,L_H)$, at least when $\Spinct$ is a 
torsion $\Spinc$-structure and all knots involved have a Seifert surface of 
genus $\leq 2$. The figure-eight knot and its lifts in the branched covers have 
Seifert surfaces of genus $1$. Since the Alexander polynomial of $L:=4_1$ 
(which is $t^2 - 3t+1$) has no roots that are roots of unity, $H_1(Y_H;\Z)$ and 
$H_1(Y_K;\Z)$ are finite, and thus all $\Spinc$-structures on these manifolds 
are torsion.

We wonder whether there is a sense in which $\bigprofintensor{X_S} \vsV$ is the 
sutured Heegaard Floer homology of the ``sutured pro-$3$-manifold'' 
$(\varprojlim_H M_H,\varprojlim_H \Gamma_H)$. The $\Z_2$-bundle $\varprojlim_H 
M_H$ over $M$ is the $2$-adic completion of the Alexander cover of $M$. The 
boundary of the Alexander cover is a cylinder that unwraps the meridian of $M$; 
it is $\R \times \lambda$ in the notation of \S~\ref{ssec:example-fig-eight}.  
The boundary of $\varprojlim_H M_H$ is a solenoid times $\lambda$, and 
$\varprojlim_H \Gamma_H$ consists of two solenoids which cover the two 
meridional sutures of $\Gamma \subset \partial M$.

\subsection{\texorpdfstring{$M_H$}{The cyclic cover of the exterior} as a 
  mapping torus}
\label{ssec:example-mapping-torus}

The exterior $M$ and its $n$-fold cyclic covers are diffeomorphic to surface 
bundles over $S^1$. In other words, these manifolds are diffeomorphic to
\begin{equation}
  \label{eq:mapping-torus}
  ([0, 1] \times F) / \sim, \qquad (1, x) \sim (0, \psi (x)),
\end{equation}
where $F$ is a genus-$1$ surface with $1$ boundary component, and the monodromy 
$\psi \colon F \to F$ restricts to the identity on the boundary of $F$. For 
$M$, the fibers of the bundle are Seifert surfaces for $\knotL = \figeight$.

Like any genus-$1$ surface with $1$ boundary component, the oriented surface 
$F$ can be obtained from a disk by attaching two $1$-handles to its boundary.  
Fix such a presentation of $F$, and let $a$ and $b$ be oriented circles 
determined by the two $1$-handles respectively. The circles $a$ and $b$ 
intersect transversely at a single point. We denote by $\tau_a$ the 
right-handed Dehn twist along $a$, and by $\tau_b$ the right-handed Dehn twist 
along $b$. We also denote by $\tau_a^{-1}$ and $\tau_b^{-1}$ the corresponding 
left-handed Dehn twists. If the initial disk is equipped with a marked point 
$z$ in its boundary, then $a$ and $b$ may be distinguished. We do so by 
identifying $\tau_a$ with $\tau_\ell$, and $\tau_b$ with $\tau_m$, in 
\cite[Section~10.2]{LipOzsThu15}.

\begin{figure}[!htbp]
  \labellist
  \pinlabel $z$ at 50 0
  \pinlabel \textcolor{red}{$b$} at 50 50
  \pinlabel \textcolor{red}{$a$} at 230 50

  \endlabellist
  \includegraphics[scale=0.4]{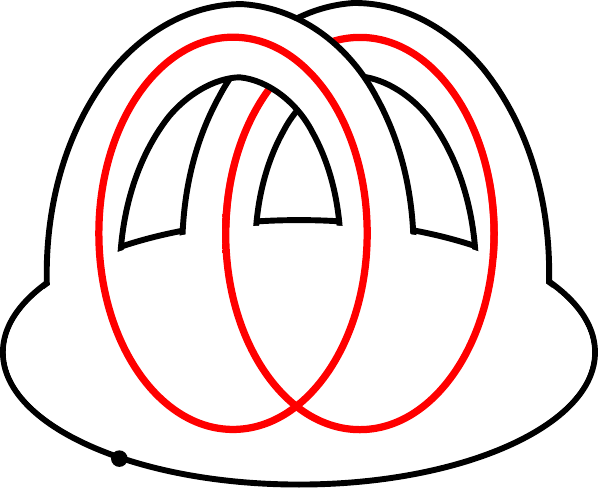}
  \caption{Disk with two handles.}
  \label{fig:disk-with-handles-labeled}
\end{figure}

All the same observations above apply also to the exterior of the trefoil and 
its finite cyclic covers. The exterior of the trefoil is diffeomorphic to 
\eqref{eq:mapping-torus} when $\psi = \tau_a \circ \tau_b$; the exterior of its 
mirror is diffeomorphic to \eqref{eq:mapping-torus} when $\psi = \tau_b^{-1} 
\circ \tau_a^{-1}$. The exterior of the figure-eight knot, what we have been 
denoting by $M$, is diffeomorphic to \eqref{eq:mapping-torus} when $\psi = 
\tau_a \circ \tau_b^{-1}$.  Note that the mapping torus associated to the 
cyclic permutation $\tau_b^{-1} \circ \tau_a$ is a diffeomorphic manifold.  
Since $L = 4_1$ is isotopic to its own mirror, the mapping tori associated to 
$\tau_b \circ \tau_a^{-1}$ and $\tau_a^{-1} \circ \tau_b$ are also 
diffeomorphic to $M$.

Now with this setup, we may also describe the cyclic $n$-fold covers of $M$ as 
mapping tori.
Fix $\phi = \tau_a \circ \tau_b^{-1}$. For $H = 2^m \Z_2 \supset \germ$, the 
manifold $M_H$ is diffeomorphic to \eqref{eq:mapping-torus} when $\psi = 
\phi^{2^m}$.

\subsection{\texorpdfstring{$Y_H$}{The branched cyclic cover} as an open book}
\label{ssec:example-open-book}

Since $\phi^{2^m}$ fixes pointwise the boundary $\partial F$ of $F$, the 
boundary of the mapping torus $([0, 1] \times F) / \sim$ is a parametrized 
torus. The diffeomorphism to $M_H$ parametrizes $\partial M_H$:
\[
  ([0, 1] / \sim) \times \partial F \stackrel{\sim}{\to} \partial M_H.
\]
Let $I_z$ be a small closed interval in $\partial F$ containing $z$. The 
diffeomorphism $([0, 1] \times F) / \sim \to M_H$ can be chosen to carry $([0, 
1] \times \partial I_z) / \sim$ to $\Gamma_H$. In other words, the two 
components of \[
  ([0, 1] / \sim) \times \partial I_z = ([0, 1] \times \partial I_z) / \sim
\]
are two meridians of our knot $L$.

If we attach a solid torus $D^2 \times S^1$ to $([0, 1] \times F) / \sim$ along
\[
  ([0, 1] / \sim) \times \partial F \cong \partial D^2 \times S^1,
\]
we obtain a manifold diffeomorphic to the cyclic cover $Y_H$ of $S^3$ branched 
along $\knotL = \figeight$ as in \eqref{eq:YsubH}. This diffeomorphism to $Y_H$ 
is an open-book decomposition of $Y_H$. The image of each $\set{p} \times F$ in 
$Y_H$ is a page of the open book, and the image of the core $\set{0} \times 
S^1$ of the solid torus $D^2 \times S^1$ is the binding $L_H$.

Since $Y_{\Z_2} \cong S^3$ is a $\Z$-homology sphere, $Y_H$ is the only 
$2^m$-fold cyclic cover of $S^3$ branched along $L$ for each $H = 2^m \Z_2 
\supset \germ$.

\subsection{Bordered Heegaard Floer theory}
\label{ssec:example-bhf}

The handle decomposition of $F$ in Figure~\ref{fig:disk-with-handles-labeled} 
is encoded by a pointed matched circle $\pmc$ in the sense of 
\cite[Definition~3.16]{LipOzsThu18}, and determines a differential graded (dg) 
algebra $\dga = \dga (\pmc)$ over an idempotent ring $\idemring (\pmc)$ 
(\cite[Definition~3.23]{LipOzsThu18}). For the handle decomposition of $F$ in 
Figure~\ref{fig:disk-with-handles-labeled}, $\idemring (\pmc)$ is exactly the 
$\idemring$ in \S~\ref{ssec:example-ring-bimod}.

In bordered Heegaard Floer theory \cite{LipOzsThu18, LipOzsThu15}, there are 
four species of $(\dga, \dga)$-bimodules; they are called ``type-\AA, \DD, \DA, 
and \AD'' bimodules. (These remarks apply to our algebra $\dga (\pmc)$ and to a 
more general class of $\Ainf$-algebras.) None of these four are traditional dg 
bimodules, but their associated derived categories are equivalent 
\cite[Proposition~2.3.18]{LipOzsThu15} and also equivalent to that of dg 
bimodules \cite[Proposition~2.4.1]{LipOzsThu15}.  There is a Hochschild 
homology for each of these species of bimodules, which is a functor valued in 
vector spaces.  The explicit model for the Hochschild homology of type-\DA{} 
bimodules $\HHbox$ \cite[\S~2.3]{LipOzsThu15} will be useful to us.  Given a 
diffeomorphism $\psi \colon (F, z) \to (F, z)$ that restricts to the identity 
on the boundary $\partial F$, bordered Heegaard Floer theory determines a 
quasi-isomorphism class of each type of these bimodules. We denote by
\[
  \CFDAh (\psi) =
  \prescript{\dga}{}{\CFDAh (\psi)}_{\dga}
\]
a representative of the quasi-isomorphism type of type-\DA{} bimodules 
associated to $\psi$. (The left superscript indicates that it is of type-D on 
the left, and the right subscript indicates that it is of type-A on the right.)

\begin{theorem*}[{\cite[Theorem~7]{LipOzsThu15}}]
  \label{thm:bfh-hh}
  Let $\psi \colon (\surf, \basept) \to (\surf, \basept)$ be a diffeomorphism 
  that restricts to the identity on the boundary $\bdy \surf$, giving an 
  open-book decomposition with binding $L \subset Y$. The Hochschild homology 
  of the bimodule $\CFDAh (\psi)$ computes the knot Floer homology $\HFKh (Y, 
  L)$.
\end{theorem*}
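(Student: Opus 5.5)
The plan is to deduce the statement from the self-gluing (Hochschild) version of the bordered pairing theorem, applied to a bordered Heegaard diagram for the mapping cylinder of $\psi$. This is the route of \cite{LipOzsThu15}, which we take as the model.

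First, fix a bordered Heegaard diagram $\heeg_\psi$ for the mapping cylinder $[0,1] \times \surf$. Its two boundary components are parametrized by $\pmc$ (identity on one end) and by $\pmc$ twisted by $\psi$ on the other. Because $\psi$ fixes $\bdy \surf$ pointwise, both parametrizations send the basepoint $\basept$ to the same arc $[0,1] \times \{\basept\}$. One can build $\heeg_\psi$ by starting from the standard diagram of the identity cylinder and applying $\psi$ to the $\alpha$-arcs on one side. By definition, $\CFDAh(\psi)$ is computed from $\heeg_\psi$.

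Second, self-glue $\heeg_\psi$ by identifying its two boundary circles. The $\alpha$-arcs close up into $\alpha$-circles, and the result $\heeg^\circ$ is a closed Heegaard diagram for the mapping torus of $\psi$ with the solid torus $D^2 \times S^1$ filled in, that is, for the open-book manifold $Y$.

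The key topological point concerns the two basepoint regions. The two copies of the boundary region adjacent to $\basept$ no longer merge into a single region. Instead they produce two basepoints $w, z$, one on each side of the old gluing circle. In $Y$, the arc joining $w$ to $z$ that crosses the $\alpha$-curves traces out the binding $L$. This identifies $\heeg^\circ$ as a doubly pointed diagram for $(Y, L)$, so its Floer complex computes $\HFKh(Y, L)$. It is cleanest to check this on the sutured side, as in \eqref{eq:sfh-hfk}: the sutures $([0,1] \times \partial I_z)/\sim$ of \S~\ref{ssec:example-open-book} are exactly the meridional sutures.

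Third, and this is the analytic core, identify the explicit model $\HHbox$ of the Hochschild complex of a type-\DA{} bimodule with the knot Floer complex of $\heeg^\circ$, via neck-stretching along the gluing circle.
\begin{enumerate}
  \item The generators of $\HHbox(\CFDAh(\psi))$ are generators $\genx$ of $\CFDAh(\psi)$ whose left and right idempotents are complementary in the way forced by cyclic closure. These correspond bijectively to intersection points of $\heeg^\circ$.
  \item The differential counts sequences of $\DA$-operations $\DAmap^1_{k}$ whose outgoing algebra elements feed back in as inputs. Under neck stretching, holomorphic curves in $\heeg^\circ$ avoiding $w$ and $z$ degenerate into such cyclic configurations of curves in $\heeg_\psi$ with matching Reeb chords on the two sides.
  \item This is the same time-dilation and degeneration argument as in the proof of the pairing theorem, and I would simply run that argument again with the single diagram glued to itself. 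A cheaper alternative is to reduce to the ordinary pairing theorem: write $\HHbox$ as a box tensor product of $\CFDAh(\psi)$ with the type-\DD{} identity bimodule and an $\AA$ diagonal bimodule for the identity cobordism. Each of these is realized by Heegaard diagrams, and gluing them all yields $\heeg^\circ$.
\end{enumerate}

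The main obstacle is this third step. One must ensure the Hochschild complex is bounded, which requires choosing $\heeg_\psi$ admissible in a sense compatible with self-gluing, possibly after finger moves. One must also verify that the curves through the neck that pass near the basepoint region are exactly the ones excluded by forbidding $w$ and $z$. This exclusion is what makes the result hat-knot Floer homology rather than $\widehat{\mathrm{HF}}$ of $Y$ or $\SFH$ of some other sutured structure. I would first try the reduction to the ordinary pairing theorem, since it makes the admissibility bookkeeping local.
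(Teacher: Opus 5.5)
The paper does not prove this statement. It quotes \cite[Theorem~7]{LipOzsThu15}, and the Remark after it reinterprets the theorem through Zarev's bordered--sutured gluing: gluing the top of $[0,1]\times\surf$ to its bottom turns the side sutures $[0,1]\times\bdy I_z$ into meridional sutures, and $\SFH$ of the binding exterior is $\HFKh(Y,L)$ by \eqref{eq:sfh-hfk}. Your outline follows the Lipshitz--Ozsv\'ath--Thurston route, and its topology agrees with that Remark. However, Step~2 is wrong as written.

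\emph{Step 2.} Gluing the two boundary circles of an arced diagram of genus $g$ gives a closed surface of genus $g+1$ carrying only $g$ $\alpha$-circles and $g$ $\beta$-circles, so $\heeg^\circ$ is not a closed Heegaard diagram. The regions next to $\basept$ on the two boundary circles were already a single region, joined by the arc $\mathbf{z}$. After gluing, $\mathbf{z}$ closes up to an essential loop $\ell$ in that region. The two basepoints (equivalently, the two sutures) arise by cutting along $\ell$, so they lie on either side of $\ell$, not of the gluing circle.

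This is exactly where the knot is decided. The literal self-gluing of the bordered manifold $[0,1]\times F(\pmc)$ is the closed mapping torus of $\hat\psi$ (the extension of $\psi$ by the identity over the capping disk), containing the knot $S^1\times\{\basept\}$. That knot has the same exterior as $L$, but its meridian is $\{t\}\times\bdy\surf$ rather than $S^1\times\{q\}$. The answers really differ: for a disk page one gets $\HFKh(S^1\times S^2,S^1\times\mathrm{pt})=0$, whereas $\HFKh(S^3,\text{unknot})=\Ftwo$. What identifies the result with $(Y,L)$ is that the sutures are parallel to the closed-up framing arc, i.e.\ they are $S^1\times\bdy I_z$. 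Your ``sutured check'' is therefore the actual argument and should replace the closed-diagram claim.

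\emph{Step 3, cheaper alternative.} This does not work with the tools you name. However you chain $\CFDAh(\psi)$, the \DD{} identity and the AA identity together, the final gluing joins two boundary components of one connected piece, or equivalently pairs two pieces along two surfaces at once. That gluing closes the framing arcs into a knot and is itself a self-gluing. The ordinary pairing theorem (connected boundary, one basepoint) does not cover it, so the reduction is circular.

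The reduction becomes legitimate in Zarev's bordered--sutured theory, where gluing along the disconnected surface $\{0\}\times\surf\sqcup\{1\}\times\surf$ is an ordinary gluing; that is precisely the viewpoint of the paper's Remark. A drilling argument that makes the last gluing surface connected would also do. Of your two options, the first one is viable: rerun the time-dilation/neck-stretching argument directly on the self-glued diagram, with an admissibility condition that keeps the Hochschild complex finite.
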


\begin{remark*}
  The theory of bordered--sutured Floer homology by Zarev \cite{Zar11} provides 
  an alternative perspective on Theorem~\ref{thm:bfh-hh}.
  Consider the $3$-manifold $[0, 1] \times \partial F$, without identifying the 
  $(1, x)$ and $(0, \psi (x))$ via the monodromy.
  In this framework, we partition the boundary of this manifold into three 
  parts: the ``top'' $\set{1} \times \partial F$, the ``bottom'' $\set{0} 
  \times \partial F$, and the ``side'' $[0, 1] \times \bdy F$. There are 
  ``sutures'' $[0, 1] \times \bdy I_z$ on the ``side'' boundary, consisting of 
  two disjoint intervals.
  Then $F$ parametrizes both the top $\set{1} \times \partial F$ and the bottom 
  $\set{0} \times \partial F$, but in different ways, according to the 
  monodromy $\psi$. These parametrizations allow the top and the bottom to be 
  ``glued'' to other $3$-manifolds. If we glue the top to the bottom, then we 
  obtain the manifold $([0, 1] \times F) / \sim$ that we have seen before, 
  which is diffeomorphic to $Y \setminus N (L)$.  The sutures glue up to $([0, 
  1] / \sim) \times \bdy I_z$, which may be identified with the meridional 
  sutures $\Gamma$ as before.

  $\CFDAh (\psi)$ may then be understood as the bordered--sutured Floer 
  bimodule associated to the manifold $[0, 1] \times \partial F$. Then the 
  statement of Theorem~\ref{thm:bfh-hh} becomes that the Hochschild homology of 
  $\CFDAh (\psi)$ computes $\SFH (Y \setminus N (L), \Gamma)$, which is 
  isomorphic to $\HFKh (Y, L)$ by \eqref{eq:sfh-hfk}.
\end{remark*}

A consequence of Theorem~\ref{thm:bfh-hh} is that we can compute $\HFKh (Y_H, 
L_H)$ by computing the Hochschild homology of $\CFDAh (\psi)$ with $\psi = 
\phi^{2^m}$.  We now further break down this computation:

\begin{theorem*}[{\cite[Theorem~5]{LipOzsThu15}}]
  \label{thm:bfh-comp}
  Let $\sbdiff_1, \sbdiff_2 \colon (\surf, \basept) \to (\surf, \basept)$ be 
  diffeomorphisms that restrict to the identity on the boundary $\bdy \surf$.  
  We have a quasi-isomorphism
  \[
    \CFDAh (\sbdiff_2 \circ \sbdiff_1) \qisom
    \CFDAh (\sbdiff_1) \boxtensor \CFDAh (\sbdiff_2),
  \]
  where $\boxtensor$ denotes the box tensor 
  \cite[Definition~2.3.9]{LipOzsThu15}.
\end{theorem*}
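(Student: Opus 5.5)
This statement is quoted from \cite[Theorem~5]{LipOzsThu15}, so within this paper it is a citation rather than something proved afresh. If I were to prove it, I would follow the standard bordered pairing strategy: represent each $\CFDAh(\sbdiff_i)$ by a bordered Heegaard diagram for the mapping cylinder of $\sbdiff_i$, glue these geometrically, and show that the glued diagram computes both sides.

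\textbf{Step 1: diagrams for the mapping cylinders.} Choose an arced bordered Heegaard diagram $\heeg_i$ for the mapping cylinder $[0,1]\times \surf$. Its two boundary components are parametrized by $\pmc$, using the identity on one end and $\sbdiff_i$ on the other.

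\textbf{Step 2: geometric gluing.} The mapping cylinder of $\sbdiff_2\circ\sbdiff_1$ is the union of the two cylinders along a copy of $\surf$. So the diagram $\heeg_1\cup_{\pmc}\heeg_2$, obtained by matching the right boundary of $\heeg_1$ with the left boundary of $\heeg_2$, is a bordered diagram for $\sbdiff_2\circ\sbdiff_1$. By the invariance theorem for $\CFDAh$ (independence of diagram, up to homotopy equivalence), its type-\DA{} bimodule represents $\CFDAh(\sbdiff_2\circ\sbdiff_1)$.

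\textbf{Step 3: the pairing theorem.} It remains to identify $\CFDAh(\heeg_1\cup\heeg_2)$ with $\CFDAh(\heeg_1)\boxtensor\CFDAh(\heeg_2)$. This is done in three moves.
\begin{enumerate}
  \item Generators of the glued diagram are pairs of generators with complementary idempotents along the gluing circle. These are exactly the generators of the box tensor product.
  \item Stretch the neck along the gluing circle. Holomorphic curves in the glued diagram then degenerate into pairs of curves in $\heeg_1$ and $\heeg_2$ whose Reeb chords at the east infinity of $\heeg_1$ match those at the west infinity of $\heeg_2$.
  \item Apply time dilation on the type-A side. Curves on the type-A side are then counted with their chords in arbitrary order. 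This matches the box-tensor formula $\sum_k (\id\tensor m_{k+1})\comp(\DAmap^k\tensor\id)$, with the type-D outputs of $\heeg_1$ fed into the $\Ainf$ operations of $\heeg_2$.
\end{enumerate}
Admissibility of at least one of $\heeg_1$, $\heeg_2$ ensures that the box tensor is finite.

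\textbf{Step 4: conclusion.} Steps 2 and 3 together give the desired quasi-isomorphism.

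The main obstacle is the analytic degeneration in Step 3. One must show that, in the limit, the moduli spaces of the glued diagram are in bijection with fibered products of the moduli spaces on the two halves, with matched height data. The hardest part is controlling the boundary degenerations and collisions of levels that appear at the neck. My first approach would be to mimic the proof of the original pairing theorem in \cite{LipOzsThu18}, namely the neck-stretching argument followed by time dilation, carried out separately for the \DA{} boundary on each side.
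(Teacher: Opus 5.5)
Your outline matches what the paper relies on. The paper gives no proof here and simply quotes \cite[Theorem~5]{LipOzsThu15}, which is obtained there along the lines you describe: glue the mapping cylinders of $\sbdiff_1$ and $\sbdiff_2$, then invoke the pairing theorem for \DA{} bimodules, whose proof adapts the neck-stretching and time-dilation argument of \cite{LipOzsThu18}.

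One slip in your Step~3 needs fixing. Since the right (type-A) boundary of $\heeg_1$ is glued to the left (type-D) boundary of $\heeg_2$, it is the type-D outputs of $\heeg_2$ that are fed into the operations $\DAmap^1_{1+k}$ of $\heeg_1$, as in \eqref{eq:DAmap-box}, not the other way around. Correspondingly, under time dilation it is the $\heeg_2$-side curve that breaks into a $k$-story building producing $\DAmap^k$. The $\heeg_1$-side curve keeps its chords in the prescribed order, with only their relative heights freed (not ``in arbitrary order'').
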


The box tensor $\boxtensor$ \cite[Definition~2.3.9]{LipOzsThu15} is a specific 
model for the derived tensor product for type-\DA{} bimodules.
The standard equivalence between the derived category of type-\DA{} bimodules 
and the derived category of dg bimodules intertwines the box tensor product 
$\boxtensor$ and the derived tensor product 
\cite[Corollary~2.3.23]{LipOzsThu15}.

In Zarev's framework \cite{Zar11}, Theorem~\ref{thm:bfh-comp} is simply the 
gluing theorem, where we glue together the manifolds $[0, 1] \times \bdy \surf$ 
associated to $\phi_1$ and $\phi_2$.

Putting everything together, for $H = 2^m \Z_2 \supset \germ$, we have
\begin{equation}
  \label{eq:hfk-hh-2m}
  \SFH (M_H, \Gamma_H) \cong
  \HFKh (Y_H, L_H) \cong
  \HHbox (\underbrace{\CFDAh (\sbdiff) \boxtensor \dotsb \boxtensor \CFDAh 
    (\sbdiff)}_{2^m \text{ times}}).
\end{equation}

Thus, in the case of the figure-eight knot, we have reduced the problem to 
computing $\CFDAhsbdiff$, where $\sbdiff = \twistl \comp \twistm^{-1}$ is a 
diffeomorphism of the base-pointed genus-$1$ surface $(\surf, \basept)$ that 
restricts to the identity on the boundary $\bdy \surf$.  Again using 
Theorem~\ref{thm:bfh-comp}, we have
\[
  \CFDAhsbdiff = \CFDAh (\twistl \comp \twistm^{-1}) \qisom
  \CFDAh (\twistm^{-1}) \boxtensor \CFDAh (\twistl).
\]
The dg algebra $\dga (\pmc)$ and the bimodules $\CFDAh (\twistm^{-1})$ and 
$\CFDAh (\twistl)$ are explicitly computed in \cite[Sections~3.3 and 
10.2]{LipOzsThu15}, which we recall below.

\subsection{The torus algebra}
\label{ssec:example-dga}

Let $\pmc$ be the pointed matched circle associated with the handle 
decomposition of $\surf$ in Figure~\ref{fig:disk-with-handles-labeled}.
The algebra $\dga (\pmc)$ and its idempotent subring $\idemring (\pmc)$ 
decompose as direct products by the ``strands grading'',
\begin{equation}
  \label{eq:dga-strands-gr}
  \begin{aligned}
    \dga (\pmc) &= \dga (\pmc, -1) \times \dga (\pmc, 0) \times \dga (\pmc, 
    1),\\
    \idemring (\pmc) &= \idemring (\pmc, -1) \times \idemring (\pmc, 0) \times 
    \idemring (\pmc, 1),
  \end{aligned}
\end{equation}
where $\dga (\pmc, -1) = \idemring (\pmc, -1) = \Ftwo \gen{\idem_{\emptyset}}$ 
and the inclusion $\idemring (\pmc, 1) = \Ftwo \gen{\idem_{\set{0,1}}} \to \dga 
(\pmc, 1)$ is a quasi-isomorphism.  A consequence is that all quasi-invertible 
bimodules over $\dga (\pmc, -1)$ and $\dga (\pmc, 1)$ are quasi-isomorphic to 
the identity bimodule $\Ftwo$.

The underlying vector space of $\dga (\pmc, 0)$ is $8$-dimensional over 
$\Ftwo$. A basis for $\dga (\pmc, 0)$ is
\begin{equation}
  \label{eq:dga-basis}
  \idem_{\set{0}}, \idem_{\set{1}},
  \strand_1, \strand_2, \strand_3, \strand_{12}, \strand_{23}, \strand_{123},
\end{equation}
where $\idem_{\set{0}}$ and $\idem_{\set{1}}$ are idempotents with 
$\idem_{\set{0}} \idem_{\set{1}} = \idem_{\set{1}} \idem_{\set{0}} = 0$. We 
denote the ring $\Ftwo \gen{\idem_{\set{0}}, \idem_{\set{1}}}$ by $\idemring 
(\pmc, 0)$. The other six elements
have idempotent compatibility conditions
\begin{align*}
  \strand_1 &= \idem_{\set{0}} \strand_1 \idem_{\set{1}}, &
  \strand_2 &= \idem_{\set{1}} \strand_2 \idem_{\set{0}}, &
  \strand_3 &= \idem_{\set{0}} \strand_3 \idem_{\set{1}}, \\
  \strand_{12} &= \idem_{\set{0}} \strand_{12} \idem_{\set{0}}, &
  \strand_{23} &= \idem_{\set{1}} \strand_{23} \idem_{\set{1}}, &
  \strand_{123} &= \idem_{\set{0}} \strand_{123} \idem_{\set{1}}.
\end{align*}

The differential on $\dga (\pmc, 0)$ is identically zero, while the 
multiplication between two of the $\strand$'s is also zero except for
\[
  \strand_1 \strand_2 = \strand_{12}, \qquad
  \strand_2 \strand_3 = \strand_{23}, \qquad
  \strand_1 \strand_{23} = \strand_{123}, \qquad
  \strand_{12} \strand_3 = \strand_{123}.
\]

Besides the strands grading \eqref{eq:dga-strands-gr}, we will not make use of 
the other gradings on $\idemring (\pmc)$, $\dga (\pmc)$, and $\CFDAhsbdiff$.

\subsection{The bimodules \texorpdfstring{$\CFDAh (\twistm^{-1}, 0)$ and 
    $\CFDAh (\twistl, 0)$}{associated to Dehn twists in degree 0}}
\label{ssec:example-bimods}

As both the bimodules
\[
  \prescript{\dga (\pmc)}{}{\CFDAh (\twistm^{-1})}_{\dga (\pmc)}
  \qquad \text{and} \qquad
  \prescript{\dga (\pmc)}{}{\CFDAh (\twistl)}_{\dga (\pmc)}
\]
are bimodules over $\dga (\pmc)$, which decomposes by the strands grading, we 
have a corresponding direct sum decomposition
\begin{gather*}
  \CFDAh (\twistm^{-1}) =
  \bigdirsum_{i \in \set{-1, 0, 1}}
  \prescript{\dga (\pmc, i)}{}{\CFDAh (\twistm^{-1}, i)}_{\dga (\pmc, i)},\\
  \CFDAh (\twistl) =
  \bigdirsum_{i \in \set{-1, 0, 1}}
  \prescript{\dga (\pmc, i)}{}{\CFDAh (\twistl, i)}_{\dga (\pmc, i)}.
\end{gather*}
Since $\CFDAh (\psi, i)$ is quasi-isomorphic to the identity bimodule $\Ftwo$ 
for $i \in \set{-1, 1}$ and any $\psi$, we focus only on $\CFDAh (\twistm^{-1}, 
0)$ and $\CFDAh (\twistl, 0)$.

The vector space underlying the bimodule $\CFDAh (\twistm^{-1}, 0)$ is 
generated over $\Ftwo$ by three generators $\genpmi$, $\genqmi$, and $\genrmi$,
with compatibility conditions
\[
  \genpmi = \idem_0 \cdot \genpmi \cdot \idem_0, \qquad
  \genqmi = \idem_1 \cdot \genqmi \cdot \idem_1, \qquad
  \genrmi = \idem_1 \cdot \genrmi \cdot \idem_0.
\]
The vector space underlying the bimodule $\CFDAh (\twistl, 0)$ is generated 
over $\Ftwo$ by three generators $\genpl$, $\genql$, and $\gensl$,
\[
  \genpl = \idem_0 \cdot \genpl \cdot \idem_0, \qquad
  \genql = \idem_1 \cdot \genql \cdot \idem_1, \qquad
  \gensl = \idem_0 \cdot \gensl \cdot \idem_1.
\]

To endow these $(\idemring (\pmc), \idemring (\pmc))$-bimodules with a 
type-\DA{} $(\dga (\pmc), \dga (\pmc))$-bimodule structure, it remains to 
specify the \emph{type-\DA{} structure maps}. In general, given a dg algebra 
$\dga$ over a ground ring $\idemring$, a type-\DA{} bimodule 
$\prescript{\dga}{}{M}_{\dga}$ is specified by $(\idemring, \idemring)$-linear 
maps
\[
  \DAmap_{1+j}^1 \colon M \tensor \dga [1]^{\tensor j} \to \dga [1] \tensor M,
\]
for $j \geq 0$, satisfying the compatibility conditions in 
\cite[Definition~2.2.43]{LipOzsThu15}.
The structure maps for $\CFDAh (\twistm^{-1}, 0)$ and $\CFDAh (\twistl, 0)$ are 
given by Figure~\ref{fig:DAbimodules}.

\begin{figure}[!htbp]
  \begin{tikzpicture}[y=54pt,x=1in,scale=0.9]
    \node at (-0.5,2) (pmi) {$\genpmi$};
    \node at (1.5,2) (qmi) {$\genqmi$};
    \node at (0.5,0) (rmi) {$\genrmi$};
    \node at (2.5,2) (ql) {${\genql}$};
    \node at (4.5,2) (pl) {${\genpl}$};
    \node at (3.5,0) (sl) {${\gensl}$};

    \draw[->] (rmi) [bend left=15] to node[above,sloped] 
    {$\scriptstyle{\rho_{23}\tensor (\rho_3)}$} (qmi);

    \draw[->] (pmi) [bend left=15] to node[above,sloped] 
    {$\scriptstyle{\rho_1\tensor (\rho_1)+\rho_{123}\tensor(\rho_{123})}$}  
    (qmi);
    \draw[->] (pmi) [bend right=15] to node[below,sloped]  
    {$\scriptstyle{\rho_3\tensor () + \rho_1\tensor (\rho_{12})}$} (rmi);
    \draw[->] (pmi) [loop] to node[above] 
    {$\scriptstyle{\rho_{12}\tensor(\rho_{123},\rho_2)}$} (pmi);
    \draw[->] (qmi) [bend left=15] to node[below,sloped] 
    {$\scriptstyle{\rho_2\tensor(\rho_{23},\rho_2)}$} (pmi);
    \draw[->] (qmi) [loop] to node[above] 
    {$\scriptstyle{\rho_{23}\tensor(\rho_{23})}$} (qmi);
    \draw[->] (rmi) [bend right=15] to node[above,sloped] 
    {$\scriptstyle{\rho_2\tensor (\rho_3,\rho_2)}$} (pmi);
    \draw[->] (qmi) [bend left=15] to node[below,sloped] 
    {$\scriptstyle{1\tensor (\rho_2)}$} (rmi);

    \draw[->] (sl) [bend left=15] to node[above,sloped] {$\scriptstyle{1\tensor 
        (\rho_2)}$} (pl);
    \draw[->] (ql) [bend left=15] to node[above,sloped] 
    {$\scriptstyle{\rho_2\tensor (\rho_2,\rho_1)}$} (sl);
    \draw[->] (ql) [loop] to node[above]  {$\scriptstyle{\rho_{23}\tensor 
        (\rho_2,\rho_{123})}$}  (ql);
    \draw[->] (ql) [bend right=15] to node[below,sloped] 
    {$\scriptstyle{\rho_2\tensor(\rho_2,\rho_{12})}$}  (pl);
    \draw[->] (pl) [bend left=15] to node[below,sloped] 
    {$\scriptstyle{\rho_{12}\tensor(\rho_1)}$} (sl);
    \draw[->] (pl) [bend right=15] to node[above,sloped] 
    {$\scriptstyle{\rho_3\tensor(\rho_3)+\rho_{123}\tensor(\rho_{123})}$} (ql);
    \draw[->] (pl) [loop] to node[above] 
    {$\scriptstyle{\rho_{12}\tensor(\rho_{12})}$} (pl);
    \draw[->] (sl) [bend left=15] to node[below,sloped] 
    {$\scriptstyle{\rho_1\tensor ()+\rho_3\tensor(\rho_{23})}$} (ql);
  \end{tikzpicture}
  \caption{Left: The type-\DA{} bimodule $\CFDAh (\twistm^{-1}, 0)$.  Right: 
    The type-\DA{} bimodule $\CFDAh (\twistl, 0)$.}
  \label{fig:DAbimodules}
\end{figure}

Let us explain how to read the structure maps from the diagrams of 
Figure~\ref{fig:DAbimodules}, starting with the unique arrow from $\genrmi$ to 
$\genqmi$ and the unique arrow from $\genqmi$ to $\genpmi$ in the left diagram.  
The label $\strand_{23} \tensor \strand_3$ on $\genrmi \to \genqmi$ indicates 
that $\strand_{23} \tensor \genqmi$ appears as a term in $\DAmap_2^1 (\genrmi 
\tensor \strand_3)$. Since $\blank \tensor \strand_3$ does not appear as a 
label on any other arrow out of $\genrmi$, there are no other contributions, 
and
\[
  \DAmap_2^1 (\genrmi \tensor \strand_3) = \strand_{23} \tensor \genqmi.
\]
On $\genqmi \to \genpmi$, the label $\strand_2 \tensor (\strand_{23}, 
\strand_2)$ indicates that $\strand_2 \tensor \genpmi$ appears as a term in 
$\DAmap_3^1 (\genqmi \tensor \strand_{23} \tensor \strand_2)$. Again, there are 
no other contributions, and so
\[
  \DAmap_3^1 (\genqmi \tensor \strand_{23} \tensor \strand_2) = \strand_2 
  \tensor \genpmi.
\]
In general, the $j$-tuple of $\strand$'s to the right of the $\tensor$ symbol 
of a summand of a label denotes the $j$ input algebra elements of 
$\DAmap_{1+j}^1$.  The $\strand$ or $1$ to the left of the $\tensor$ symbol 
denotes the output algebra element of a contribution to $\DAmap_{1+j}^1$.
For example, on $\genpmi \to \genrmi$, the label $\strand_3 \tensor () + 
\strand_1 \tensor (\strand_{12})$ indicates contributions to two different 
structure maps $\DAmap_1^1$ and $\DAmap_2^1$. There are no other contributions 
to $\DAmap_1^1 (\genp)$ and $\DAmap_2^1 (\genp \tensor \strand_{12})$, and so
\[
  \DAmap_1^1 (\genp) = \strand_3 \tensor \genr, \qquad \DAmap_2^1 (\genp 
  \tensor \strand_{12}) = \strand_1 \tensor \genr.
\]
In addition to the information encoded by the diagrams, the structure maps 
always obey the \emph{strictly unital} condition: For all $x$ in the bimodule,
\begin{equation}
  \label{eq:strictly-unital}
  \begin{aligned}
    \DAmap_2^1 (x \tensor 1) & = 1 \tensor x,\\
    \DAmap_{1+j}^1 (x \tensor a_1 \tensor \dotsb \tensor a_j) & = 0 \text{ 
      whenever } j > 1 \text{ and some } a_k \in \idemring (\pmc, 0).
  \end{aligned}
\end{equation}
Since, for $j \geq 3$, there are no summands in any label with a $j$-tuple of 
$\rho$'s to the right of the $\tensor$ symbol, only $\DAmap_1^1$, $\DAmap_2^1$, 
and $\DAmap_3^1$ are not identically zero:
\[
  \DAmap_{1+j}^1 \equiv 0 \quad \text{for} \quad j \geq 3.
\]

\subsection{The bimodule \texorpdfstring{$\CFDAh (\twistm^{-1}, 0) \boxtensor 
    \CFDAh (\twistl, 0)$}{associated to the exterior of the figure-eight knot 
    and its Seifert surface}}
\label{ssec:example-tensor}

We are now ready to compute $\CFDAh (\twistm^{-1}, 0) \boxtensor \CFDAh 
(\twistl, 0)$.

For short, we will write
\[
  \bimodmerinv := \CFDAh (\twistm^{-1}, 0), \qquad
  \bimodlong := \CFDAh (\twistl, 0)
\]
respectively.  The underlying $(\idemring (\pmc, 0), \idemring (\pmc, 
0))$-bimodule of $\bimodmerinv \boxtensor \bimodlong$ is defined 
\cite[Definitions~2.3.2, 2.3.9]{LipOzsThu15} to be
\(
  \bimodmerinv \tensor_{\idemring (\pmc, 0)} \bimodlong.
\)
The only pure tensors that do not vanish are
$\genpmi \tensor_{\idemring (\pmc, 0)} \genpl$,
$\genpmi \tensor_{\idemring (\pmc, 0)} \gensl$,
$\genqmi \tensor_{\idemring (\pmc, 0)} \genql$,
$\genrmi \tensor_{\idemring (\pmc, 0)} \genpl$, and
$\genrmi \tensor_{\idemring (\pmc, 0)} \gensl$.
We denote them by $\genpp$, $\genps$, $\genqq$, $\genrp$, and $\genrs$ for 
short. They form an $\Ftwo$-basis for $\bimodmerinv \boxtensor \bimodlong$. The 
$(\idemring (\pmc, 0), \idemring (\pmc, 0))$-bimodule structure is given by
\begin{gather*}
  \genpp = \idem_0 \cdot \genpp \cdot \idem_0, \qquad
  \genps = \idem_0 \cdot \genps \cdot \idem_1, \qquad
  \genqq = \idem_1 \cdot \genqq \cdot \idem_1, \\
  \genrp = \idem_1 \cdot \genrp \cdot \idem_0, \qquad
  \genrs = \idem_1 \cdot \genrs \cdot \idem_1.
\end{gather*}

Let us recall some information about the type-\DA{} structure maps of a box 
tensor of two type-\DA{} structures from \cite[Section~2.3]{LipOzsThu15}.
Let $M$ and $N$ be two type-\DA{} bimodules that can be tensored together, with 
structure maps
$\DAmap_{1+j}^{1, M}$ and $\DAmap_{1+j}^{1, N}$ respectively.
The structure maps
$\DAmap_{1+j}^{1, \boxtensor}$ of $M \boxtensor N$ are
given by an expression involving the tensor algebra $T^* (\dga [1])$ and maps 
\[
  \DAmap^{k, N} \colon N \tensor T^* (\dga [1]) \to \dga [1]^{\tensor k} 
  \tensor N,
\]
defined by an inductive process from the structure maps $\DAmap_{1+j}^{1, N}$.
(Some details about this inductive process are explained around 
\eqref{eq:DAmap-genpl} below.)
The maps $\DAmap_{1+j}^{1, \boxtensor}$ are determined by the rule:
\begin{equation}
  \label{eq:DAmap-box}
  \sum_{j=0}^\infty \DAmap_{1+j}^{1, \boxtensor} := \sum_{k=0}^\infty 
  (\DAmap_{1+k}^{1, M} \tensor \id_N) \comp
  (\id_M \tensor \DAmap^{k,N}).
\end{equation}

Informally, the value of \eqref{eq:DAmap-box} on $x \tensor y \tensor a_1 
\tensor \dotsb \tensor a_j$
is the sum of all possible results of applying, for some $k \geq 0$, the 
composition of $k$ iterations of $\DAmap_{1+j_i}^{1, N}$ to $y$, each iteration 
consuming $j_i \geq 0$ of the input algebra elements $a_1, \dotsc, a_j$, such 
that all $j$ input algebra elements are consumed after the $k$ iterations, and 
then feeding $x$ and the $k$ output algebra elements of $\DAmap_{1+j_i}^{1, N}$ 
to exactly one iteration of $\DAmap_{1+k}^{1, M}$.

All of the structure maps $\DAmap_{1+j}^{1, \boxtensor}$ for $\bimodmerinv 
\boxtensor \bimodlong$ are then determined by a standard computation (which we 
expand on below), reported here in a format similar to that of 
Figure~\ref{fig:DAbimodules}, but with the targets of the arrows inserted as a 
middle tensor factor between the input and output $\strand$'s:
\begin{equation}
  \label{eq:DAbimod-box-final}
  \begin{aligned}
    \genpp & \mapsto
    \strand_{12} \tensor \genpp \tensor (\strand_{123}, \strand_{2}, 
    \strand_{12})
    + \strand_{12} \tensor \genps \tensor (\strand_{123}, \strand_2, 
    \strand_1)\\
    & \phantom{{} \mapsto {}}
    + \strand_{123} \tensor \genqq \tensor (\strand_{123})
    + \strand_{1} \tensor \genrs \tensor (\strand_{1})\\
    & \phantom{{} \mapsto {}}
    + \strand_{1} \tensor \genrp \tensor (\strand_{12})
    + \strand_{3} \tensor \genrp \tensor (), \\
    \genps & \mapsto
    1 \tensor \genpp \tensor (\strand_2)
    + \strand_1 \tensor \genqq \tensor ()
    + \strand_3 \tensor \genrs \tensor (), \\
    \genqq & \mapsto
    \strand_{2} \tensor \genpp \tensor (\strand_2, \strand_{123}, \strand_2, 
    \strand_{12})
    + \strand_2 \tensor \genps \tensor (\strand_2, \strand_{123}, \strand_2, 
    \strand_1)\\
    & \phantom{{} \mapsto {}}
    + \strand_{23} \tensor \genqq \tensor (\strand_2, \strand_{123})
    + 1 \tensor \genrp \tensor (\strand_2, \strand_{12})\\
    & \phantom{{} \mapsto {}}
    + 1 \tensor \genrs \tensor (\strand_2, \strand_1), \\
    \genrp & \mapsto
    \strand_{2} \tensor \genpp \tensor (\strand_3, \strand_2, \strand_{12})
    + \strand_2 \tensor \genps \tensor (\strand_3, \strand_2, \strand_1)
    + \strand_{23} \tensor \genqq \tensor (\strand_3), \\
    \genrs & \mapsto
    \strand_2 \tensor \genpp \tensor (\strand_{23}, \strand_2, \strand_{12})
    + \strand_2 \tensor \genps \tensor (\strand_{23}, \strand_2, \strand_1)\\
    & \phantom{{} \mapsto {}}
    + \strand_{23} \tensor \genqq \tensor (\strand_{23})
    + 1 \tensor \genrp \tensor (\strand_{2}).
  \end{aligned}
\end{equation}

Before we explain how to complete this computation systematically,
let us illustrate how the second summand in the row starting with $\genrp$ is 
obtained. The meaning of this summand is that $\strand_2 \tensor \genps$ 
appears as a term in $\DAmap_4^{1,\boxtensor} (\genrp \tensor \strand_3 \tensor 
\strand_2 \tensor \strand_1)$.

In the diagram for $\bimodlong = \CFDAh (\twistl, 0)$, there is a term 
$\strand_3 \tensor (\strand_3)$ in the label on $\genpl \to \genql$.  This term 
in the label means that
\(
  \strand_3 \tensor \genql
\)
appears as a term in
\(
  \DAmap_2^{1, \bimodlong} (\genpl \tensor \strand_3).
\)
Next, there is a term $\strand_2 \tensor (\strand_2, \strand_1)$ in the label 
on $\genql \to \gensl$, meaning that
\(
  \strand_2 \tensor \gensl
\)
appears as a term in
\(
  \DAmap_3^{1, \bimodlong} (\genql \tensor \strand_2 \tensor \strand_1).
\)
Combining these $k = 2$ iterations of $\DAmap_{1+j_i}^{1, \bimodlong}$ into one 
operation, we may write that
\(
  \strand_3 \tensor \strand_2 \tensor \gensl
\)
appears as a term in the value of
\(
  \DAmap^{k,\bimodlong} = \DAmap^{2,\bimodlong}
\)
on
\(
  \genpl \tensor \strand_3 \tensor \strand_2 \tensor \strand_1
\):
\begin{equation}
  \label{eq:DAmap-genpl}
  \DAmap^{2,\bimodlong} (\genpl \tensor \strand_3 \tensor \strand_2 \tensor 
  \strand_1) = \strand_3 \tensor \strand_2 \tensor \gensl + \dotsb.
\end{equation}
In fact, there are no other terms:
\(
  \DAmap^{2,\bimodlong} (\genpl \tensor \strand_3 \tensor \strand_2 \tensor 
  \strand_1) = \strand_3 \tensor \strand_2 \tensor \gensl
\).

Meanwhile, in the diagram for $\bimodmerinv = \CFDAh (\twistm^{-1}, 0)$, there 
is a term $\strand_2 \tensor (\strand_3, \strand_2)$ in the label on $\genrmi 
\to \genpmi$. This term in the label means that
\(
  \strand_2 \tensor \genpmi
\)
appears as a term in
\(
  \DAmap_3^{1,\bimodmerinv} (\genrmi \tensor \strand_3 \tensor \strand_2).
\)
Again, there are no other terms:
\(
  \DAmap_3^{1,\bimodmerinv} (\genrmi \tensor \strand_3 \tensor \strand_2)
  = \strand_2 \tensor \genpmi.
\)

The computations so far, i.e.\
\begin{equation*}
  \DAmap_3^{1, \bimodmerinv} (\genrmi \tensor \strand_3 \tensor \strand_2) = 
  \strand_2 \tensor \genpmi, \qquad
  \DAmap^{2, \bimodlong} (\genpl \tensor \strand_3 \tensor \strand_2 \tensor 
  \strand_1) = \strand_3 \tensor \strand_2 \tensor \gensl,
\end{equation*}
give
the value of the $k = 2$ summand of the right-hand side of \eqref{eq:DAmap-box} 
on $\genrp \tensor \strand_3 \tensor \strand_2 \tensor \strand_1$.
Specifically, the value is $\strand_2 \tensor \genps$.
Because there are $3$ $\strand$'s in $\genrp \tensor \strand_3 \tensor 
\strand_2 \tensor \strand_1$, it is acted upon by the $j = 3$ summand of the 
left-hand side of \eqref{eq:DAmap-box},
and we obtain a contribution to the value of $\DAmap_{1+j}^{1,\boxtensor} = 
\DAmap_4^{1,\boxtensor}$ on $\genrp \tensor \strand_3 \tensor \strand_2 \tensor 
\strand_1$:
\[
  \DAmap_4^{1, \boxtensor} (\genrp \tensor \strand_3 \tensor \strand_2 \tensor 
  \strand_1) = \strand_2 \tensor \genps + \dotsb.
\]
In fact, there are no other terms, but to verify this, one must complete the 
more systematic computation outlined below.

The algorithm to compute $\DAmap_{1+j}^{1, \boxtensor}$ is as follows. Fix one 
of the $5$ generators $x_1 y_1 = x_1 \tensor y_1 \in \set{\genpp, \genps, 
  \genqq, \genrp, \genrs}$ of $\bimodmerinv \boxtensor \bimodlong$.
Let $\outerloop$ be the set of ordered pairs (arrow, summand of label) out of 
$x_1$ in the diagram for $\bimodmerinv$. Let $\outerloopp$ be the union of 
$\outerloop$ with the singleton set $\set{(x_1 \to x_1, 1 \tensor 1)}$. This 
additional element is not displayed in the diagram for $\bimodmerinv$ but is 
required to account for the strictly unital condition 
\eqref{eq:strictly-unital}.
For each element of $\outerloop$,
write $(a_1, \dotsc, a_k)$ for the sequence of $\strand$'s that appear to the 
right of the $\tensor$ sign, write $b$ for the $\strand$ that appears to the 
left of the $\tensor$ sign, and write $x_2$ for the target of the arrow.
This means that
\begin{equation}
  \label{eq:DAmap-box-tensor-B}
  \DAmap_{1+k}^{1,\bimodmerinv} (x_1 \tensor a_1 \tensor \dotsb \tensor a_k) = 
  b \tensor x_2 + \dotsb.
\end{equation}
For the element of $\outerloopp \setminus \outerloop$, note that 
\eqref{eq:DAmap-box-tensor-B} also holds, with $(k, (a_1 \tensor \dotsb \tensor 
a_k), b) = (1, (1), 1)$, and $x_2 = x_1$.

If $k > 0$, find all length-$k$ sequences of pairs (arrow, summand of label) 
that have the following properties:
\begin{itemize}
  \item The sequence of arrows is a directed path starting at $y_1$;
  \item For $1 \leq i \leq k$, the summand of the label on the $i$-th arrow has 
    $a_i$ appearing to the left of the $\tensor$ sign.
\end{itemize}
For each such length-$k$ sequence of pairs (arrow, summand of label), let 
$y_{k+1}$ be the terminal vertex of the directed path. Concatenate the $k$ 
sequences of $\strand$'s to the right of the $\tensor$ signs associated to 
these pairs to obtain a single sequence $(c_1, \dotsc, c_j)$.
Record the contribution to the value of $\DAmap^{k,\bimodlong} (y_1 \tensor c_1 
\tensor \dotsb \tensor c_j)$ as in \eqref{eq:DAmap-genpl}:
\begin{equation}
  \label{eq:DAmap-box-tensor-A}
  \DAmap^{k,\bimodlong} (y_1 \tensor c_1 \tensor \dotsb \tensor c_j) = a_1 
  \tensor \dotsb \tensor a_k \tensor y_{k+1} + \dotsb.
\end{equation}
If $k = 0$, the only sequence of pairs is the empty sequence, and we record 
$\DAmap^{0,\bimodlong} (y_1) = y_1$.
Combine \eqref{eq:DAmap-box-tensor-B} and \eqref{eq:DAmap-box-tensor-A} to 
obtain:
\[
  \DAmap_{1+j}^{1,\boxtensor} (x_1 y_1 \tensor c_1 \tensor \dotsb \tensor c_j) 
  = b \tensor x_2 y_{k+1} + \dotsb.
\]

After completing this process
for every element of $\outerloopp$,
we will have computed all contributions to $\DAmap^{1,\boxtensor}$ that have 
$x_1 y_1$ appearing as a tensor factor in its input.
After completing this process for every $x_1 y_1$, we will have computed all 
contributions to $\DAmap^{1,\boxtensor}$.

\subsection{The differential of the Hochschild chain complex}
\label{ssec:example-ch}

We are almost ready to compute $\HFKh (Y_H, L_H)$ for $H = 2^m \Z_2 \supset 
\germ$.  It is the same, by \eqref{eq:hfk-hh-2m}, as computing
\[
  \HHbox (\underbrace{\CFDAh (\sbdiff) \boxtensor \dotsb \boxtensor \CFDAh 
    (\sbdiff)}_{2^m \text{ times}}),
\]
where $\HHbox$ is the Hochschild homology defined for type-\DA{} bimodules in 
\cite[\S~2.3.5]{LipOzsThu15}.
The Hochschild chain complex is denoted $\CHbox$.

\begin{proposition*}
  \label{prop:comp-diff-zero}
  For all $n$, the differential of the chain complex
  \[
    \CHbox (\underbrace{\CFDAh (\sbdiff, 0) \boxtensor \dotsb \boxtensor \CFDAh 
      (\sbdiff, 0)}_{n \text{ times}})
  \]
  is identically zero.
\end{proposition*}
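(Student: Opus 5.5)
The plan is to show that no term of the Hochschild differential can ever ``close up''. I would do this with an additive weight on the torus algebra, chosen so that every structure map of $\CFDAh(\sbdiff,0)$ either strictly lowers it or preserves it only in a rigid way.

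First I will recall the shape of $\CHbox$ from \cite[\S~2.3.5]{LipOzsThu15} for a type-\DA{} bimodule $N$ with structure maps $\DAmap^1_{1+j}$. Its underlying space is the idempotent-diagonal part of $N$, i.e.\ $\HH_0$ of the underlying $(\idemring,\idemring)$-bimodule. The differential of a generator $y$ is a sum over cyclic chains of structure maps $y=y_0\to y_1\to\dotsb\to y_\ell$. In such a chain, the algebra outputs emitted on the left are exactly the algebra inputs consumed on the right, in matching cyclic order. (Equivalently, one box-tensors with the identity/diagonal bimodule and closes up.)

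For $N=\CFDAh(\sbdiff,0)^{\boxtensor n}$, iterating \eqref{eq:DAmap-box} expresses each term of $\DAmap^{1,N}$ as a composite of terms of the single-copy maps listed in \eqref{eq:DAbimod-box-final}. The differential therefore counts closed configurations, and the statement reduces to showing that no such configuration exists.

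Next I will choose a weight $w$ on the nonidempotent basis elements $\strand_1,\strand_2,\strand_3,\strand_{12},\strand_{23},\strand_{123}$ that is additive under multiplication, so that $w(\strand_{12})=w(\strand_1)+w(\strand_2)$ and so on, and that vanishes on idempotents. The requirement is that every term $b\tensor z\tensor(c_1,\dotsc,c_j)$ of \eqref{eq:DAbimod-box-final} satisfies
\[
  w(b)\le \sum_i w(c_i).
\]
The naive length grading fails. The $\DAmap^1_1$ terms $\genpp\mapsto\strand_3\tensor\genrp$, $\genps\mapsto\strand_1\tensor\genqq$ and $\genps\mapsto\strand_3\tensor\genrs$ emit algebra with no input. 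So $w$ must be allowed to be negative or otherwise unequal on $\strand_1,\strand_3$ versus $\strand_2$. Alternatively, I would pass to a two-component weight (the multiplicities of $\strand_1,\strand_2,\strand_3$, compared against the corresponding multiplicities of the inputs), together with a generator-dependent correction $\epsilon(z)$ on the five generators, and ask that
\[
  w(b)+\epsilon(z) \le \sum_i w(c_i)+\epsilon(y)
\]
for every term out of $y$. Finding such a potential $\epsilon$ together with $w$ is a small linear-programming check over the finitely many terms in \eqref{eq:DAbimod-box-final}.

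Once such $(w,\epsilon)$ exists, the inequality propagates through box tensor powers, because the weights telescope along composites. Around a closed Hochschild configuration, total output weight equals total input weight and the $\epsilon$'s telescope to $0$. Hence every term used must be an equality case. The final step is to list the equality-case terms and check directly, using idempotent compatibilities and the order of the $\strand$'s, that they cannot be chained cyclically with outputs matching inputs.

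The main obstacle is finding the weight and potential: the three input-free $\DAmap^1_1$ terms make any single positive grading fail. If no linear weight works, my fallback is a grading-based argument. I would use the Alexander (or $\Spinc$/homological) grading on $\CFDAh(\sbdiff)$ from \cite{LipOzsThu15}, under which a closed loop in the Hochschild complex must have trivial periodic class. I would then show, by the same equality-case analysis, that nonzero terms would require a periodic domain of nonzero multiplicity in the cover. This is impossible because the Alexander polynomial $t^2-3t+1$ has no roots of unity.
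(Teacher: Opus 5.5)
There is a genuine gap, and it starts with your model of $\CHbox$. In the complex of \cite[(2.3.45)]{LipOzsThu15}, the emitted algebra outputs are not simply the consumed inputs. A contributing configuration is an acyclic operation graph whose vertices are the maps $\DAmap^{1,\boxtensor}_{1+j}$, projections $\pi$ (which kill idempotents), and exactly one augmentation vertex. The edge entering the augmentation must carry an idempotent for the contribution to be nonzero.

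Under your closed-loop description, the single term $\genpp\mapsto\strand_{123}\tensor\genqq\tensor(\strand_{123})$ of \eqref{eq:DAbimod-box-final} already closes up on its own, between two distinct diagonal generators, and it is an equality case for every weight $w$. The same holds for $\genpp\mapsto\strand_1\tensor\genrs\tensor(\strand_1)$ and $\genrs\mapsto\strand_{23}\tensor\genqq\tensor(\strand_{23})$. So your final direct check would find closed configurations rather than exclude them.

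The weight step also fails on its own terms:
\begin{itemize}
\item A configuration contributing $y$ to $\partial x$ usually has $y\neq x$. Your potential then telescopes to $\epsilon(x)-\epsilon(y)$, not to $0$. Summing gives only $\epsilon(y)\le\epsilon(x)$, which is a filtration and does not exclude strict inequalities at individual steps.
\item Without a potential, additivity forces $w\equiv 0$. The terms $\genpp\mapsto\strand_3\tensor\genrp\tensor()$, $\genps\mapsto\strand_1\tensor\genqq\tensor()$, $\genps\mapsto 1\tensor\genpp\tensor(\strand_2)$, $\genrp\mapsto\strand_{23}\tensor\genqq\tensor(\strand_3)$ and $\genrp\mapsto\strand_2\tensor\genps\tensor(\strand_3,\strand_2,\strand_1)$ give $w(\strand_3)\le0$, $w(\strand_1)\le0$, $w(\strand_2)\ge0$, $w(\strand_2)\le0$ and $w(\strand_1)+w(\strand_3)\ge0$. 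The inequality then carries no information, and the whole proof is deferred to the equality-case analysis you have not done.
\item Your fallback has the same defect. Periodic domains only govern terms from a generator to itself. The absence of roots of unity only makes $Y_H$ a rational homology sphere, which makes domains between distinct generators unique, not absent.
\end{itemize}

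The missing idea is the one the paper uses. No vertex of an operation graph multiplies algebra elements, so there is no reason to impose additivity. Instead, track the basis element $\strand_2$ itself. Inspecting \eqref{eq:DAbimod-box-final} together with strict unitality shows the following: every term whose output has a nonzero $\strand_2$-coefficient or a nonzero idempotent coefficient has some input of one of these two kinds. Meanwhile $\pi$ kills idempotents and preserves the $\strand_2$-coefficient.

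Consider the topmost edge of an operation graph carrying such a label. Its source must be a $\pi$ or $\DAmap$ vertex, which in turn needs an incoming edge of the same kind, a contradiction. So no edge carries such a label. In particular, the edge into the augmentation vertex is never an idempotent, and every configuration contributes $0$.

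In your language, the non-additive weight ``coefficient of $\strand_2$'' works with no potential at all. It satisfies $w(b)\le\sum_i w(c_i)$ for every term, strictly for each non-unital term that outputs an idempotent. Even so, the conclusion comes from the augmentation vertex and acyclicity, not from a closed-loop count.
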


\begin{proof}

  Let $x_1 \tensor \dotsb \tensor x_n$ be a generator of the chain complex.
  The differential applied to $x_1 \tensor \dotsb \tensor x_n$ is a sum over 
  labelled directed graphs called ``operation trees'' \cite[Definition~2.2.17, 
  par.\ before Definition~2.2.41]{LipOzsThu15}.
  The word ``tree'' is inappropriate in the Hochschild case but standard.
  The operation trees referred to here combine the definition of the 
  differential of a box tensor product $\boxtensor$ of type-\DA{} bimodules 
  \cite[Definition~2.3.9 and Figure~4]{LipOzsThu15} and the definition of
  the differential of the Hochschild operator $\CHbox$ for type-\DA{} bimodules 
  \cite[(2.3.45)]{LipOzsThu15}.

  Each of the directed graphs contributing to the differential has $n$ 
  ``input'' edges labelled by $x_1$ through $x_n$, and internal vertices 
  labelled by operations $\pi$, $\aug$, or $\DAmap^{1,\boxtensor}_{1+j}$ for 
  some $j$.  The label on each of the rest of the edges, including the $n$ 
  ``output'' edges, is obtained by applying the operation to the incoming edges 
  that are incident with its source.
  We will prove that every such operation tree contributes zero to the 
  differential on $x_1 \tensor \dotsb \tensor x_n$.
  Two facts we appeal to are that (1) if an edge is labelled with $0$, then the 
  operation tree contributes $0$; and that (2) every operation tree for the 
  Hochschild complex has exactly one vertex labelled by the augmentation 
  $\aug$.

  Let $C \subset \dga (\pmc, 0)$ be the subset consisting of all elements the 
  coefficient of whose $\rho_2$-term is $1$ in the basis in 
  \eqref{eq:dga-basis}.
  We claim that no edge can be labelled by an element of $C$.  Assume the 
  contrary, and let $e$ be the first edge from the top labelled by an element 
  of $C$.  The source $v$ of $e$ is labelled by either $\pi$ or 
  $\DAmap^{1,\boxtensor}_{1+j}$. If it is labelled by $\pi$, then $v$ is the 
  target of exactly one edge, and the label on that edge must be another 
  element of $C$, which contradicts that $e$ is the first edge from the top 
  with this property.  If $v$ is labelled by $\DAmap^{1,\boxtensor}_{1+j}$, 
  then \eqref{eq:DAbimod-box-final} shows that $v$ must also have an incoming 
  edge labeled by an element of $C$, again contradicting that $e$ is the first 
  edge with this property.

  Next,
  let $C' \subset \dga (\pmc, 0)$ be the subset consisting of all elements 
  that, in the basis in \eqref{eq:dga-basis}, have a non-zero coefficient in 
  the $\idem_{\set{0}}$-term, the $\idem_{\set{1}}$-term, or both.
  We claim that no edge can be labelled by an element of $C'$.
  Assume the contrary, and let $e$ be the first edge from the top labelled by 
  an element of $C'$.  The source $v$ of $e$ must be labelled by 
  $\DAmap^{1,\boxtensor}_{1+j}$, since $\pi$ sends all idempotents to $0$.
  The unitality of $\dga (\pmc, 0)$ and \eqref{eq:DAbimod-box-final} imply that 
  $v$ must have an incoming edge labeled by either an element of $C$ or $C'$.  
  We have ruled out a $C$-label as a possibility, and a $C'$-label would 
  contradict that $e$ is the first edge labeled with an element of $C'$.

  In particular, the unique edge incident with the unique vertex labelled by 
  $\aug$ must not be an element of $C'$.  Since the augmentation map $\aug 
  \colon \dga (\pmc, 0) \to \idemring (\pmc, 0)$ sends any non-idempotent to 
  $0$, the operation tree contributes zero to the differential on $x_1 \tensor 
  \dotsb \tensor x_n$.
\end{proof}

\begin{remark*}
  See Figure~\ref{fig:comp-heeg}. One way to summarize the argument above is 
  that, in the Heegaard diagram $\heeg$ corresponding to $\CFDAhsbdiff$, every 
  holomorphic curve whose domain does not touch the left boundary of $\heeg$, 
  or whose domain has $\strand_2$ appear on the left boundary of $\heeg$, must 
  also have $\strand_2$ appear on its right boundary.  The Hochschild complex 
  is computed from the Heegaard diagram $\heeg^{2^m}$ obtained by gluing $2^m$ 
  copies of $\heeg$ together and then gluing the leftmost boundary to the 
  rightmost boundary. The assertion about holomorphic curves in $\heeg$ then 
  ensures that there are no holomorphic curves in $\heeg^{2^m}$.
\end{remark*}

\begin{figure}[!htbp]
  \labellist
  \scriptsize\hair 2pt
  \pinlabel $\dotso$ at -15 102
  \pinlabel $\dotso$ at 654 102
  \pinlabel $z$ at 0 37
  \pinlabel $z$ at 212 37
  \pinlabel $z$ at 427 37
  \pinlabel $z$ at 638 37
  \pinlabel {$\genps$} at 105 23
  \pinlabel {$\genpp$} at 105 3
  \pinlabel {$\genrp$} at 317 23
  \pinlabel {$\genpp$} at 317 3

  \endlabellist
  \includegraphics[width=0.9\textwidth]{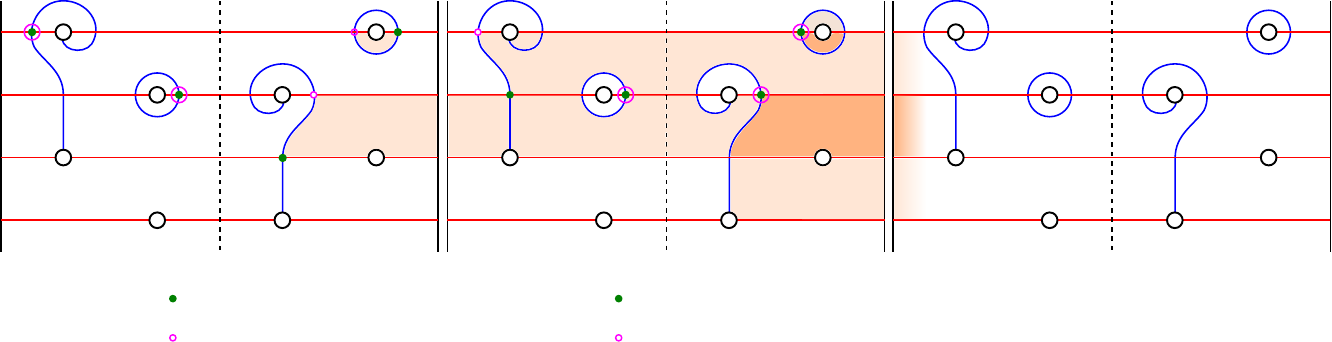}
  \caption{The non-existence of holomorphic domains in a Heegaard diagram 
    obtained by gluing together the Heegaard diagram $\heeg$ corresponding to 
    $\CFDAhsbdiff$.}
  \label{fig:comp-heeg}
\end{figure}

\subsection{Putting it together}
\label{ssec:example-conclusion}

\begin{proposition*}
  For $H = 2^m \Z_2 \supset \germ$, we have an isomorphism of vector spaces
  \[
    \HFKh (Y_H, L_H) \isom
    \HHbox (\underbrace{\CFDAh (\sbdiff) \boxtensor \dotsb \boxtensor \CFDAh 
      (\sbdiff)}_{2^m \text{ times}})
    \isom
    \bigtensor_{(2^m \Z_2 \backslash \Z_2)_S} \vsV.
  \]
  Since
  \[
    \HFKh (Y_H, L_H) \isom \SFH (M_H, L_H)
    \qquad \text{and} \qquad
    \bigtensor_{(2^m \Z_2 \backslash \Z_2)_S} \vsV \isom F_H' / F_H'',
  \]
  we have proven the theorems of \S~\ref{ssec:intro-ex} and 
  \S~\ref{ssec:example-sfh}.
\end{proposition*}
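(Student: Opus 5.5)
The plan is to chain together isomorphisms that are already available, working one strands grading at a time. The first isomorphism, $\HFKh(Y_H,L_H)\isom\HHbox(\CFDAhsbdiff^{\boxtensor 2^m})$, is \eqref{eq:hfk-hh-2m}. It follows from the Theorem of \cite[Theorem~7]{LipOzsThu15} applied to the open book with monodromy $\sbdiff^{2^m}$ (\S~\ref{ssec:example-open-book}), together with $2^m-1$ applications of the composition Theorem \cite[Theorem~5]{LipOzsThu15}. So the real work is the second isomorphism, and then the identification with $F'_H/F''_H$.

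\emph{Step 1: split by strands grading.} Since $\dga(\pmc)$ and $\idemring(\pmc)$ are products over the strands gradings $i\in\{-1,0,1\}$ \eqref{eq:dga-strands-gr}, the box tensor power and the Hochschild complex split as direct sums over $i$. Cross terms vanish because the idempotents of different gradings are orthogonal. By \S~\ref{ssec:example-dga}, the $i=\pm1$ summands of $\CFDAhsbdiff$ are quasi-isomorphic to the identity bimodule $\Ftwo$. Box tensor and $\HHbox$ respect quasi-isomorphism (\cite[Corollary~2.3.23]{LipOzsThu15}), so the $i=\pm1$ part of $\HHbox$ is $\HH_0(\Ftwo,\Ftwo)=\Ftwo$ for each sign. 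This gives one generator for $i=-1$ and one for $i=1$. On the other side, the only magnetized pure tensors supported at $\idem_\varnothing$ or $\idem_{\{0,1\}}$ are the constant cycles $t$ and $z$. These are the only possibilities because those vertices carry only the single loops $t$ and $z$ and no other edges. So these sectors match, one dimension each.

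\emph{Step 2: the strands-grading-$0$ part.} Here I use the Proposition of \S~\ref{ssec:example-ch}: the differential on $\CHbox((\bimodmerinv\boxtensor\bimodlong)^{\boxtensor 2^m})$ vanishes, so homology equals chains. In the model of \cite[\S~2.3.5]{LipOzsThu15}, the underlying space of $\CHbox$ is the cyclic tensor product over the idempotent ring. That is, it is $\HH_0(\idemring(\pmc,0),N^{\tensor_{\idemring}2^m})$, where $N$ is the underlying $(\idemring(\pmc,0),\idemring(\pmc,0))$-bimodule of $\bimodmerinv\boxtensor\bimodlong$, and box tensor products have underlying bimodule the ordinary $\tensor_\idemring$. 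By \S~\ref{ssec:example-tensor}, $N$ has basis $\genpp,\genps,\genqq,\genrp,\genrs$ with the idempotents recorded there. Comparing with the graph $\graphB$ of \S~\ref{ssec:example-ring-bimod}, I match the generators to edges as follows.
\begin{itemize}
  \item $u=\genpp$, the loop at $\idem_{\{0\}}$.
  \item $v=\genps$, the edge from $\idem_{\{0\}}$ to $\idem_{\{1\}}$.
  \item $w=\genrp$, the edge from $\idem_{\{1\}}$ to $\idem_{\{0\}}$.
  \item $x=\genqq$ and $y=\genrs$, the two loops at $\idem_{\{1\}}$.
\end{itemize}
This identifies $N\oplus\Ftwo t\oplus\Ftwo z$ with $\vsV$ as $(\idemring,\idemring)$-bimodules. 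Then \eqref{eq:Fun-BXS-R-HH-trans}, with $S$ transitive on $2^m\Z_2\backslash\Z_2$, gives
\[
  \HH_0\bigl(\idemring,\vsV^{\tensor_\idemring 2^m}\bigr)\isom\Fun(\graphB^{(2^m\Z_2\backslash X)_S},\Ftwo)=\bigtensor_{(2^m\Z_2\backslash\Z_2)_S}\vsV.
\]
Under this isomorphism the sectors through $\idem_\varnothing$ and $\idem_{\{0,1\}}$ are exactly the $t$ and $z$ summands from Step~1.

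\emph{Step 3: identify with $F'_H/F''_H$.} By \S~\ref{ssec:bimod-staircase}, $F'_H/F''_H\isom\Fun(\graphB^{H\backslash X_S},\Ftwo)^H$. Because $H=2^m\Z_2$ acts trivially on $H\backslash X$, this is the whole of $\Fun(\graphB^{H\backslash X_S},\Ftwo)$, which agrees with the space in Step~2. Finally, $\SFH(M_H,\Gamma_H)\isom\HFKh(Y_H,L_H)$ by \eqref{eq:sfh-hfk}.

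The step I expect to need the most care is Step~2's claim about the underlying space of $\CHbox$. One has to check in \cite[(2.3.45)]{LipOzsThu15} that, for a type-\DA{} bimodule, the chain group of the Hochschild complex is exactly the cyclic $\tensor_\idemring$-coinvariants, with no extra algebra tensor factors. I would verify this directly from the definition, and then confirm that the basis correspondence respects the cyclic idempotent-matching condition \eqref{eq:S-next-elem}.
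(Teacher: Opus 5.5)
Your proposal is correct and follows essentially the same route as the paper. You split $\HHbox$ by strands grading. In grading $0$ you use the vanishing of the Hochschild differential (Proposition of \S~\ref{ssec:example-ch}), so homology is the cyclic $\tensor_{\idemring}$-power of the span of $\genpp,\genps,\genqq,\genrp,\genrs$, matched by idempotents to the $u,v,w,x,y$ summand of $\vsV$. In gradings $\pm 1$ you use that the pieces are quasi-isomorphic to the identity bimodule, giving the one-dimensional $t$- and $z$-sectors. Your explicit edge matching, and the Step~3 remark that $H$ acts trivially on $H\backslash X$, only spell out what the paper leaves implicit.
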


\begin{proof}

  Note that both $\boxtensor$ and $\CHbox$ respect the strands grading, and so
  \[
    \HHbox (\underbrace{\CFDAh (\sbdiff) \boxtensor \dotsb \boxtensor \CFDAh 
      (\sbdiff)}_{2^m \text{ times}})
    \isom \bigdirsum_{i \in \set{-1, 0, 1}} \HHbox (\underbrace{\CFDAh 
      (\sbdiff, i) \boxtensor \dotsb \boxtensor \CFDAh (\sbdiff, i)}_{2^m 
      \text{ times}}).
  \]

  By the Proposition of \S~\ref{ssec:example-ch}, the differential of
  \[
    \CHbox (\underbrace{\CFDAh (\sbdiff,0) \boxtensor \dotsb \boxtensor \CFDAh 
      (\sbdiff,0)}_{2^m \text{ times}})
  \]
  is identically zero; therefore, its homology is generated over $\Ftwo$ by 
  cyclic tensor products of $\genpp$, $\genps$, $\genqq$, $\genrp$, and 
  $\genrs$.  Taking into account their idempotent compatibilities, we see that 
  this homology is isomorphic to the $2^m$-fold cyclic tensor power of the 
  summand of $\bimodV$ generated by $\genu$, $\genv$, $\genw$, $\genx$, and 
  $\geny$.

  As mentioned in \S~\ref{ssec:example-bimods}, $\CFDAh (\sbdiff, -1)$ is a 
  quasi-invertible $(\dga (\pmc, -1), \dga (\pmc, -1))$-bimodule, and $\dga 
  (\pmc, -1) = \Ftwo \gen{\idem_{\emptyset}}$.  Therefore, $\CFDAh (\sbdiff, 
  -1)$ and all its tensor powers are quasi-isomorphic to the identity bimodule 
  $\Ftwo$. The Hochschild chain complex $\CHbox$ of any of these bimodules is 
  also quasi-isomorphic to $\Ftwo$ as a chain complex of vector spaces. The 
  homology of
  \[
    \CHbox (\underbrace{\CFDAh (\sbdiff,-1) \boxtensor \dotsb \boxtensor \CFDAh 
      (\sbdiff,-1)}_{2^m \text{ times}})
  \]
  is thus also isomorphic to $\Ftwo$, to $\idem_{\emptyset} \bimodV 
  \idem_{\emptyset} = \Ftwo \gen{\gent}$, and to the $2^m$-fold cyclic tensor 
  power of $\Ftwo \gen{\gent}$.

  Similarly, the homology of the summand with strands grading $1$ is isomorphic 
  to the $2^m$-fold cyclic tensor power of $\idem_{\set{0,1}} \bimodV 
  \idem_{\set{0,1}} = \Ftwo \gen{\genz}$.
\end{proof}

The generators of $\CFDAh (\sbdiff, -1)$ and $\CFDAh (\sbdiff, 1)$ that we use 
in the proof above are not discussed in \cite[\S~10.2]{LipOzsThu15}, as only 
the summand of strands grading $0$ is considered there. In each subfigure of 
\cite[Figure~25]{LipOzsThu15}, the summand of strands grading $-1$ is always 
generated by the unique generator occupying only the $\alpha$-arcs to the left 
of the $1$-handles, i.e.\ the generator occupying the two distinct points 
labeled $p$ and $q$ on the left of each subfigure.  A similar comment applies 
to $\CFDAh (\sbdiff, 1)$.

\bibliographystyle{mwamsalphack}
\bibliography{references}

\end{document}